\newtheorem{theorem}{Theorem}
\newtheorem{lemma}[theorem]{Lemma} 
\newtheorem{proposition}[theorem]{Proposition}
\newtheorem{definition}[theorem]{Definition}
\newcommand{\bigO}{\mathcal{O}}
\let\l@section\l@chapter
\DeclareMathOperator{\Tr}{Tr}
\DeclareMathOperator{\myspan}{span}
\DeclareMathOperator{\Gr}{Gr}
\DeclareMathOperator{\diag}{diag}
\DeclareMathOperator{\dist}{dist}
\DeclareMathOperator{\grad}{grad}
\DeclareMathOperator{\Hess}{Hess}
\DeclareMathOperator{\Exp}{Exp}
\DeclareMathOperator{\Log}{Log}
\DeclareMathOperator{\atan}{atan}
\begin{document}

\title{A Nesterov-style Accelerated Gradient Descent Algorithm for the Symmetric Eigenvalue Problem}

\author{Foivos Alimisis, Simon Vary and Bart Vandereycken}

\maketitle

\begin{abstract}
We develop an accelerated gradient descent algorithm on the Grassmann manifold to compute the subspace spanned by a number of leading eigenvectors of a symmetric positive semi-definite matrix. This has a constant cost per iteration and a provable iteration complexity of $\Tilde \bigO(1/\sqrt{\delta})$, where $\delta$ is the spectral gap and $\Tilde \bigO$ hides logarithmic factors. This improves over the $\Tilde \bigO(1/\delta)$ complexity achieved by subspace iteration and standard gradient descent, in cases that the spectral gap is tiny. It also matches the iteration complexity of the Lanczos method that has however a growing cost per iteration. On the theoretical part, we rely on the formulation of Riemannian accelerated gradient descent by \cite{zhang2018towards} and new characterizations of the geodesic convexity of the symmetric eigenvalue problem by \cite{alimisis2022geodesic}. On the empirical part, we test our algorithm in synthetic and real matrices and compare with other popular methods.

\end{abstract}





\section{Introduction}

The algebraic eigenvalue problem is one of the original research topics in numerical linear algebra \cite{golub2013matrix,hornMatrixAnalysis2012a,saadNumericalMethodsLarge2011}, with numerous applications in statistics and engineering that remain relevant today. One of the most famous applications is principal component analysis (PCA), as it turns out that the principal components of a dataset correspond to eigenvectors of its covariance matrix. The importance of statistical processes like PCA in analyzing data has made eigenvalue problems a core topic of discussion among the modern machine learning community as well \cite{alimisis2021distributed,hardt2014noisy,pmlr-v119-huang20e}.

We adopt the following simple statement of the problem:
\begin{tcolorbox}
Consider a symmetric positive semi-definite matrix $A \in \mathbb{R}^{n \times n}$ and compute a subspace spanned by the $p$ leading eigenvectors of $A$, that is, the eigenvectors associated with the $p$ largest eigenvalues. Here, $1 \leq p \leq n$. 
\end{tcolorbox}
Without loss of generality, we sort the $n$ eigenvalues $\lambda_1, \ldots, \lambda_n$ of $A$ (counted with multiplicity) in descending order: $\lambda_1 \geq \lambda_2 \geq ... \geq \lambda_n \geq 0$. We denote by $\delta := \lambda_p-\lambda_{p+1}
$ the spectral gap between the $p$th and $(p+1)$th eigenvalue. Remark that the assumption on the positive semi-definiteness of the matrix $A$ can be done without loss of generality, since we can shift $A$ like $A+\alpha I_n$ with $I_n$ the $n \times n$ identity matrix and $\alpha$ a sufficiently large scalar to make $A+\alpha I$ positive semi-definite. This operation does not change the eigenvectors or the spectral gap. As we will see later from the structure of the algorithms and the convergence results, (accelerated) gradient methods are not affected by this transformation. This good property is also shared by the Lanczos iteration but not by subspace iteration. The former method has however not a constant cost per iteration and needs to be restarted after many iterations.

For convenience, let us denote $\Lambda_{\alpha}=\textnormal{
diag}(\lambda_1,...,\lambda_p)$ the diagonal matrix featuring the wanted eigenvalues and $\Lambda_{\beta}=\textnormal{
diag}(\lambda_{p+1},...,\lambda_n)$ the diagonal matrix featuring the unwanted ones. Similarly, let $V_{\alpha}=[v_1,...,v_p]$ and $V_{\beta}=[v_{p+1},...,v_n]$ be orthonormal matrices featuring the desired and undesired eigenvectors of $A$,  respectively. Note that $V_{\beta}$ is the orthogonal complement of $V_{\alpha}$ and concatenating the two matrices, we get the orthogonal matrix $V=[V_{\alpha} \hspace{1mm} V_{\beta}]$ having all the eigenvectors of $A$ in its columns.

The well-known Ky--Fan theorem states that one can compute the $p$ leading eigenvectors of $A$ by minimizing\footnote{It might seem more natural to maximize $\Tr(X^T A X)$ but later one we will use properties from convex optimization, where it is more natural to minimize functions.} the function
\begin{equation*}
    f(X)=-\Tr(X^T A X)
\end{equation*}
over the set of all orthonormal matrices $X \in \mathbb{R}^{n \times p}$. The minimum of this function is $-(\lambda_1+...+\lambda_p)=-\Tr(\Lambda_{\alpha})$. 

It follows from this formulation of the problem that a convenient space to solve it is the Stiefel manifold:
\begin{equation*}
    \textnormal{St}(n,p)=\lbrace X \in \mathbb{R}^{n \times p} \colon X^T X=I_k \rbrace.
\end{equation*}
Even though this strategy seems appealing, the Stiefel manifold is a difficult space to work with, because the formula for the exponential map (geodesics) is quite involved and the inverse of the exponential map (logarithm) is not known in closed form. In addition, minimizers of $f$ are not unique on $\textnormal{St}(n,p)$ even with strictly non-zero spectral gap.

A viable alternative comes into play by noticing that the function $f$ is invariant under orthogonal tranformations of the matrix $X$. This implies that the function
\begin{equation*}
    f(\textnormal{span}(X))=-\Tr(X^T A X)
\end{equation*}
is well-defined over the set of $p$ dimensional subspaces of $\mathbb{R}^n$ represented by an orthonormal matrix. This space admits a much more convenient Riemannian structure and is called the \emph{Grassmann} manifold. The Grassmann manifold is very well studied among the Riemannian optimization community and we refer the reader to \cite{absilOptimizationAlgorithmsMatrix2008},  \cite{alimisis2022geodesic} and \cite{bendokatGrassmannManifoldHandbook2020}, for a complete presentation of the basic notions. In Section \ref{sec:geom_grassmann}, we also present a short summary of Riemannian geometry concepts used in this paper, as well as the special Grassmann-related calculations. Its performance depends on the application, but there are cases where we can confidently argue that this can be the favourite algorithm for eigenvalue-eigenvector computation.

\section{Contributions and related work}

\subsection{Contributions}

Our contribution is the theoretical and experimental analysis of a version of Nesterov's accelerated gradient descent \cite{nesterov1983method} on the Grassmann manifold for calculating a subspace spanned by the $p$ leading eigenvectors of a matrix $A$. To that end, we rely on the rich literature of general Riemannian algorithms, and more specifically on the formulation of Riemannian accelerated gradient descent by \cite{zhang2018towards}. The other part of our analysis relies on the geodesic convexity characterization of the function $f$ (the so-called block Rayleigh quotient) on the Grassmann manifold done by \cite{alimisis2022geodesic}. The latter proves the important property that $f$ is \emph{geodesically weakly-strongly-convex} (Theorem 8 in \cite{alimisis2022geodesic}). Despite that the estimate sequences technique of \cite{zhang2018towards} targets only geodesically \emph{strongly convex} objectives, there is already a technique to design estimate sequences for a weakly-strongly-convex function in the Euclidean regime due to \cite{bu2020note}. Thus, from a technical standpoint, we need to merge the Riemannian approach for strongly convex and the Euclidean approach for weakly-strongly-convex functions. To that end, the geodesic search technique for selecting the momentum coefficient analyzed in \cite{alimisis2021momentum} (which extended the similar Euclidean technique of \cite{nesterov2020primal}) will be of great help. This approach yields provable accelerated convergence guarantees for our algorithm. On the experimental side, we show that our algorithm is competitive compared to other state-of-the-art eigensolvers.

\subsection{Accelerated optimization on manifolds}

Motivated by the classic work of Nesterov \cite{nesterov1983method}, a plethora of works focusing on accelerated methods on Riemannian manifolds has been developed in recent years. We refer the reader to \cite{ahn2020nesterov,zhang2018towards} for algorithms targeting geodesically strongly convex objective functions. There is a second line of work targeting objectives that are geodesically convex but not strongly convex with moderate success so far \cite{alimisis2021momentum,pmlr-v162-kim22k}. In both cases, the main obstacles consist of designing an estimate sequence that can handle the non-linearity of the manifold. There is also a recent line of work on no-go results on acceleration on manifolds, and namely that one cannot hope for any \emph{global} accelerated method on a manifold of negative sectional curvatures \cite{criscitiello2022negative,hamilton2021no}. The latter are not directly applicable in our case, since we work with the Grassmann manifold, which is of nonnegative sectional curvatures. However, they highlight the difficulties of designing accelerated methods on manifolds, and they give indications of why this could be achieved only locally.

\subsection{Accelerated methods for the symmetric eigenvalue problem}

The simplest method for computing eigenvectors and eigenvalues of a symmetric matrix is the subspace iteration. However, it turns out that this method is quite slow, both theoretically ($\Tilde{\bigO}(1/\delta)$ iteration complexity) and practically.
A significant part of research in numerical linear algebra has to do with ``accelerating" 
vanilla methods like subspace iteration using more complicated mechanisms. The most well-known accelerated scheme for computing leading eigenvectors is the Lanczos method, which is a member of the family of Krylov methods. The Lanczos method has iteration complexity of $\Tilde \bigO (1/\sqrt{\delta})$ and improves over subspace iteration. This method is, however, not stationary  since it enlarges an approximation subspace in every step (like any Krylov method). The cost per iteration therefore grows both in time and in memory. This iteration is therefore restarted in practice. While the restarting strategy is empirically effective, it makes the method more complicated to use and analyze. In this paper, we therefore focus on methods that are accelerated versions of stationary methods, like steepest descent. They have the benefit of a constant cost per iteration.

An example of accelerating subspace iteration has been done employing the technology of Polyak's momentum (heavy ball) method by \cite{xu2018accelerated}. The resulting deterministic scheme of this paper is a subspace iteration with an extra momentum term that has guaranteed convergence in at most $\Tilde \bigO (1/\sqrt{\delta})$ many iterations, if the momentum coefficient is chosen precisely in terms of $\lambda_{p+1}$ (the $(p+1)$th largest eigenvalue). If $\lambda_p$ and $\lambda_{p+1}$ are not known in advance (which is usually the case), then the convergence behaviour of this algorithm can worsen considerably. 

The algorithm of \cite{xu2018accelerated} is essentially a modern reformulation of the classical Chebyshev iteration (see \cite{saadNumericalMethodsLarge2011}). An interesting contribution in \cite{xu2018accelerated} (except from the main contribution, a stochastic version of the algorithm) is a clever way to implement their algorithm (essentially Chebyshev iteration) in a numerically stable manner (Lemma 12), paying the extra cost of a QR-decomposition in a $(2n) \times p$ matrix (instead of $n \times p$). A different approach based on non-linear conjugate gradients is presented in \cite{alimisis2023gradient}. The conjugate gradient method combined with a choice of step size via an exact line search has excellent empirical performance, but it is very hard to prove any theoretical convergence guarantees (\cite{alimisis2023gradient} does not provide any). Other interesting methods that are empirically accelerated but come without much theory are LOBPCG (\cite{knyazev2001toward}) and \cite{absil2009accelerated}.

\subsection{Riemannian accelerated gradient descent on Grassmann manifold}

In this paper we deviate from the previous research directions and develop a version of Nesterov's accelerated gradient descent on the Grassmann manifold for the symmetric eigenvalue problem. When measured in terms of matrix-vector products, every iterate of this algorithm has double the cost as subspace iteration and subspace iteration with momentum \cite{xu2018accelerated}. The algorithm does in addition incur overheads when computing the momentum terms and the geodesic. These costs are however not dependent on $A$ and involve only dense linear algebra routines that are typically very optimized in practical implementations.

The analysis of our method reveals that one needs at most $\Tilde{\bigO}(1/\sqrt{\delta})$ many iterations to compute the dominant subspace with accuracy $\epsilon$, if the initialization is $\bigO(\delta^{3/4})$ close to the optimal subspace. The need for local initialization is an artifact of the general analysis of the Riemannian version of accelerated gradient descent we use, developed in \cite{zhang2018towards}. Also, our algorithm relies on an almost exact knowledge for the gap $\delta=\lambda_p-\lambda_{p+1}$, similarly to \cite{xu2018accelerated} which requires exact knowledge of $\lambda_p$ and $\lambda_{p+1}$.


\section{Geometry of the Grassmann manifold and calculations for the block Rayleigh quotient}
\label{sec:geom_grassmann}

We present here a brief introduction into the geometry of the Grassmann manifold. The content is not new and for more details, we refer to \cite{absilOptimizationAlgorithmsMatrix2008,bendokatGrassmannManifoldHandbook2020,edelmanGeometryAlgorithmsOrthogonality1999}.

The $(n,p)$-Grassmann manifold is defined as the set of all $p$ dimensional subspaces of $\mathbb{R}^n$:
\begin{equation*}
    \Gr(n,p)=\lbrace \mathcal{X} \subseteq \mathbb{R}^n \colon \mathcal{X} \hspace{1mm}  \text{is a subspace and} \dim(\mathcal{X})=p \rbrace.
\end{equation*}

Any element $\mathcal{X}$ of $\Gr(n,p)$ can be represented by a matrix $X \in \mathbb{R}^{n \times p}$ that satisfies $\mathcal{X} = \myspan(X)$. Such a representative is not unique since $Y=XQ$ for some invertible matrix $Q \in  \mathbb{R}^{p \times p}$ satisfies $\myspan(Y) = \myspan(X)$. Without loss of generality, we will therefore always take matrix representatives $X$ of subspaces $\mathcal{X}$ that have orthonormal columns throughout the paper. 
With some care, the non-uniqueness of the representatives is not a problem.\footnote{This can be made very precise by describing $\Gr(n,p)$ as the quotient of the Stiefel manifold with the orthogonal group. The elegant theory of this quotient manifold is worked out in \cite{absilOptimizationAlgorithmsMatrix2008}.} As mentioned in the introduction, our objective function $f$ is invariant to $Q$.


\paragraph{Riemannian structure.} The set $\Gr(n,p)$ admits the structure of a differential manifold with tangent spaces
\begin{equation}\label{eq:def_TXGr}
    T_{\mathcal{X}} \Gr(n,p)=\lbrace G \in \mathbb{R}^{n \times p} \colon X^T G=0 \rbrace, 
\end{equation}
where $\mathcal{X} = \myspan(X)$. Since $X^T G = 0$ if and only if $(XQ)^T G=0$, for any invertible matrix $Q \in \mathbb{R}^{p \times p}$, this description of the tangent space does not depend on the representative $X$. However, a specific tangent vector $G$ will depend on the chosen $X$. With slight abuse of notation,\footnote{Using the quotient manifold theory, one would use horizontal lifts.} the above definition should therefore be interpreted as: given a fixed $X$, we define tangent vectors $G_1, G_2, \ldots $ of  $\Gr(n,p)$ at $\mathcal{X}=\myspan(\mathcal{X})$. 

This subtlety is important, for example, when defining an inner product on $T_{\mathcal{X}} \Gr(n,p)$:
\[
 \langle G_1, G_2 \rangle_{\mathcal{X}} = \Tr(G^T_1 G_2) \ \text{\ with\  $G_1,G_2 \in T_{\mathcal{X}} \Gr(n,p)$ }.
\] 
Here, $G_1$ and $G_2$ are tangent vectors of the same representative $X$. Observe that the inner product is invariant to the choice of orthonormal representative: If $\Bar{G}_1=G_1 Q$ and $\Bar{G}_2 = G_2 Q$ with orthogonal $Q$, then we have
\begin{equation*}
    \langle \bar G_1, \bar G_2 \rangle_{\mathcal{X}} = \Tr(\bar G^T_1  \bar G_2) = \Tr(Q^T G_1^T G_2 Q)= \Tr(G_1^T G_2 Q Q^T) = \Tr(G_1^T G_2).
\end{equation*}
It is easy to see that the norm induced by this inner product in any tangent space is the Frobenius norm, which we will denote throughout the paper as $\| \cdot \|:=\| \cdot \|_F$.

\paragraph{Exponential map/Retractions.} Given the Riemannian structure of $\Gr(n,p)$, we can compute the exponential map at a point $\mathcal{X}$ as \cite[Thm.~3.6]{absilRiemannianGeometryGrassmann2004}
\begin{equation}\label{eq:formula_geo}
\begin{aligned}
 \Exp_{\mathcal{X}}: T_{\mathcal{X}} \Gr(n,p) &\rightarrow \Gr(n,p) \\ 
  G &\mapsto \myspan(\, X V \cos(\Sigma) + U \sin(\Sigma) \, ), 
\end{aligned}
\end{equation}
where $ U \Sigma V^T$ is the \emph{compact} SVD of $G$ such that $\Sigma$ and $V$ are square matrices. 

The exponential map is invertible in the domain \cite[Prop.~5.1]{bendokatGrassmannManifoldHandbook2020}
\begin{equation}\label{eq:inj_exp}
\left \lbrace G \in T_{\mathcal{X}} \Gr(n,p)  \colon  \| G \|_2 < \frac{\pi}{2} \right \rbrace,
\end{equation}
where $\| G \|_2$ is the spectral norm of $G$. The inverse of the exponential map restricted to this domain is the logarithmic map, denoted by $\Log$. Given two subspaces $\mathcal{X},\mathcal{Y}\in \Gr(n,p)$, we have
\begin{equation}\label{eq:log formula}
 \Log_{\mathcal{X}}(\mathcal{Y}) = U \atan(\widehat\Sigma) \, V^T,
\end{equation}
where  $U \widehat \Sigma V^T = (I - X X^T) Y (X^T Y)^{-1}$ is again a compact SVD. This is well-defined if $X^T Y$ is invertible, which is guaranteed if all principal angles between $\mathcal{X}$ and $\mathcal{Y}$ are strictly less than $\pi / 2$ (see below). By taking $G = \Log_{\mathcal{X}}(\mathcal{Y})$, we see that $\Sigma = \atan(\widehat\Sigma)$.

Given that the computation of the exponential map can become costly, since there are many $\sin()$ and $\cos()$ quantities involved, there is an obvious interest in approximations. A first order and smooth approximation of the exponential map is called a \emph{retraction}. A valid choice for a retraction on the Grassmann manifold is
\begin{equation}\label{eq:def_retr_qr}
    \textnormal{Retr}_{\mathcal{X}}(G)=qf(X+G),
\end{equation}
where $qf(X+G)$ is the $Q$ factor of the QR decomposition of $X+G$. This is a first order approximation of the exponential map in the sense that the curves $t \rightarrow \Exp_{\mathcal{X}}(tG)$ and $t \rightarrow \textnormal{Retr}_{\mathcal{X}}(tG)$, starting from $\mathcal{X}$, have the same first derivative at $t=0$. The use of retractions in Riemannian optimization seems appealing due to their simplicity and low computational costs, but note that a retraction does not depend at all on the Riemannian metric. Thus, if one wishes to prove non-asymptotic convergence guarantees for algorithms on Riemannian manifolds, the exponential map exposes a much richer Riemannian structure and provides much stronger geometric bounds (like Lemma \ref{le:geom_bound}).

\paragraph{Principal angles.} The Riemannian structure of the Grassmann manifold can be conveniently described by the notion of the principal angles between subspaces. Given two subspaces $\mathcal{X},\mathcal{Y} \in \Gr(n,p)$, the principal angles between them are $0 \leq \theta_1 \leq \cdots \leq \theta_p \leq \pi/2$ obtained from the SVD 
\begin{equation}\label{eq:SVD_for_principal_angles}
    Y^T X=U_1 \cos \theta \ V_1^T
\end{equation}
where $U_1 \in \mathbb{R}^{p \times p}, V_1 \in \mathbb{R}^{p \times p}$ are orthogonal and the diagonal matrix $\cos \theta= \diag(\cos \theta_1,...,\cos \theta_p)$.

We can express the Riemannian logarithm using principal angles and the intrinsic distance induced by the Riemannian inner product discussed above is
\begin{equation}\label{eq:distance_with_Log_and_angles}
    \dist(\mathcal{X},\mathcal{Y)}=\| \Log_{\mathcal{X}} (\mathcal{Y}) \| = \| \Log_{\mathcal{Y}} (\mathcal{X}) \|=\sqrt{\theta_1^2+...+\theta_p^2}=\| \theta \|_2,
\end{equation}
where $\theta=(\theta_1, \ldots ,\theta_p)^T$.

If $X \in \mathbb{R}^{n \times p}$ is an arbitrary matrix with orthonormal columns, then, generically, these columns will not be exactly orthogonal to the $p$ leading eigenvectors $v_1, \ldots, v_p$ of $A$. Thus, we have with probability one that the principal angles between $\mathcal{X}$ and the space of $k$ leading eigenvectors satisfy $0 \leq \theta_1 \leq \cdots \leq \theta_p < \pi/2$.

\paragraph{Curvature.}
We can compute exactly the sectional curvatures in $\Gr(n,p)$, but for our purposes we only need an upper and lower bound. A lower bound is $0$, while an upper bound is $2$, see \cite{Wong,bendokatGrassmannManifoldHandbook2020}. These curvature bounds give rise to many geometric bounds on the Grassmann manifold. For our analysis, we use the following two. The first is a geometric bound proved in \cite{zhang2018towards} and the second is a classical result which follows by Toponogov's theorem:

\begin{lemma}
\label{le:geom_bound}
    Let $a,b,c,d$ be four points in a geodesically uniquely convex subset of a Riemannian manifold, with sectional curvatures in the interval $[-K,K]$ and
    \begin{equation*}
       \max \lbrace \textnormal{dist}(c,a), \textnormal{dist}(d,a) \rbrace \leq \frac{1}{4 \sqrt{K}},
    \end{equation*}
    then
    \begin{equation*}
    \| \textnormal{Log}_{d}(a) - \textnormal{Log}_{d}(b) \|^2 \leq (1+5K\max \lbrace \textnormal{dist}(c,a), \textnormal{dist}(d,a) \rbrace^2) \| \textnormal{Log}_{c}(a) - \textnormal{Log}_{c}(b) \|^2.    \end{equation*}
\end{lemma}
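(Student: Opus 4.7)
The approach is to compare both chord-length expressions to a common intrinsic reference on the manifold, namely the geodesic distance $\dist(a,b)^2$, and then take the ratio. In a flat Euclidean space, both $\|\Log_c(a)-\Log_c(b)\|^2$ and $\|\Log_d(a)-\Log_d(b)\|^2$ coincide with $\dist(a,b)^2$, so the lemma becomes trivial with constant zero; the curvature-dependent factor $5K \cdot \max\{\dist(c,a),\dist(d,a)\}^2$ quantifies the second-order correction when curvature is present. The assumption $\max\{\dist(c,a),\dist(d,a)\}\leq 1/(4\sqrt{K})$ keeps us in the regime where such a Taylor-type analysis is quantitatively valid.

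Concretely, I would first write the law of cosines in each tangent space,
\[
\|\Log_p(a)-\Log_p(b)\|^2 = \dist(p,a)^2 + \dist(p,b)^2 - 2\,\dist(p,a)\dist(p,b)\cos\angle_p(a,b), \quad p\in\{c,d\},
\]
and then apply the classical Rauch and Toponogov comparison theorems to convert the angle-based expressions into bounds in the constant-curvature model spaces of curvature $\pm K$. Toponogov applied at $c$ yields a lower bound on $\|\Log_c(a)-\Log_c(b)\|^2$ in terms of $\dist(a,b)^2$ and a correction function of $\sqrt{K}\,\dist(c,a)$ and $\sqrt{K}\,\dist(c,b)$; Rauch applied at $d$ yields an upper bound on $\|\Log_d(a)-\Log_d(b)\|^2$ in terms of $\dist(a,b)^2$ and correction functions of $\sqrt{K}\,\dist(d,a)$ and $\sqrt{K}\,\dist(d,b)$. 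Dividing the two estimates and Taylor-expanding $\cosh$ and $\cos$ in the small parameter should produce a multiplicative factor of the desired form $1 + O(K\cdot r^2)$.

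The main obstacle, and where I expect the bulk of the technical effort to lie, is showing that the $b$-dependent contributions (namely $\dist(c,b)$ and $\dist(d,b)$) largely cancel in the ratio, so that the residual correction is controlled purely by $\max\{\dist(c,a),\dist(d,a)\}$. The two key inputs for this cancellation are the triangle inequality $|\dist(c,b)-\dist(d,b)| \leq \dist(c,d) \leq \dist(c,a) + \dist(d,a)$, and careful Taylor bookkeeping of the model-space trigonometric expressions, exploiting that the leading $\dist(a,b)^2$ term is common to both bounds. Pinning down the explicit constant $5$, as opposed to a larger one, then reduces to estimating the remainder terms uniformly in the regime $\sqrt{K}\cdot\max\{\dist(c,a),\dist(d,a)\}\leq 1/4$, which is tedious but essentially algebraic.
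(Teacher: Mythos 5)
First, a point of context: the paper does not actually prove this lemma — it is quoted from \cite{zhang2018towards} ("a geometric bound proved in \cite{zhang2018towards}"), so the relevant comparison is with that source's argument, not with anything in the present text. Your skeleton is the same in spirit as the standard route: write $\|\Log_p(a)-\Log_p(b)\|^2$ exactly via the Euclidean law of cosines in $T_p$ (correct, since the Riemannian angle at $p$ is the tangent-space angle), then replace the angle at $d$ using the curvature upper bound (Alexandrov/CAT-type comparison, what you call Rauch) to get an upper bound through $\dist(a,b)$, and the angle at $c$ using the curvature lower bound (Toponogov) to get a matching lower bound, and finally take the ratio.

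The genuine gap is exactly where you place ``the bulk of the technical effort'': nothing in the outline actually makes the final multiplicative factor depend only on $\max\{\dist(c,a),\dist(d,a)\}$, uniformly over the position of $b$, and the tools you name are not sufficient for that. The triangle inequality $|\dist(c,b)-\dist(d,b)|\le \dist(c,d)$ only controls a zeroth-order difference of side lengths, whereas the corrections coming out of the hyperbolic/spherical laws of cosines are not symmetric in the two sides adjacent to the angle: expanded naively they produce distortion terms governed by $\sqrt{K}\,\dist(p,b)$ when $b$ is far from the base points, and terms of size $K\dist(p,b)^2$ measured against a possibly tiny chord when $a$ and $b$ are close, so ``dividing the two estimates and Taylor-expanding'' does not visibly yield a factor $1+\mathcal{O}(K r^2)$ with $r=\max\{\dist(c,a),\dist(d,a)\}$. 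The missing idea in the cited proof is the use of comparison inequalities in which the curvature distortion can be attached to the side whose length is known to be small — the distances from the base points to $a$ — in the style of the trigonometric distance bounds of Zhang and Sra (the $\zeta(\kappa,\cdot)$-type lemma for the lower curvature bound and its spherical counterpart for the upper bound), combined with the restriction $\sqrt{K}\,r\le 1/4$ to control remainders and extract the explicit constant $5$. Since this cancellation and the explicit constant are the entire content of the lemma, recording them as something that ``should'' follow from tedious algebra leaves the proof incomplete: what you have is a plausible opening move plus an accurate description of the open difficulty, not an argument that closes it.
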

By the previous discussion, if $a,b,c,d$ are subspaces on the Grassmann manifold with the previously analysed Riemannian structure, we can take $K=2$.

\begin{lemma}
\label{prop:tangent_space}
Let $a,b,c$ three points on a manifold of nonnegative sectional curvatures, such that the geodesics between them exist and are unique. 
 Then
\begin{equation*}
    \textnormal{dist}(a , b) \leq \| \Log_c(a)-\Log_c(b) \|.
\end{equation*}

\end{lemma}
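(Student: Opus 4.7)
The plan is to invoke the hinge version of Toponogov's comparison theorem, which is the standard tool for comparing geodesic triangles on a manifold of nonnegative sectional curvature with Euclidean triangles. Since the assumption gives us unique geodesics between each pair of the points $a,b,c$, the triangle $\triangle abc$ is well defined, and we may measure the angle $\gamma$ at $c$ between the initial tangent vectors $\Log_c(a)$ and $\Log_c(b)$ of the two geodesic sides emanating from $c$.

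The first step is to build the comparison triangle $\triangle \tilde a \tilde b \tilde c$ in Euclidean space $\mathbb{R}^2$ with
\[
 \|\tilde c \tilde a\| = \|\Log_c(a)\|, \quad \|\tilde c \tilde b\| = \|\Log_c(b)\|, \quad \angle \tilde a \tilde c \tilde b = \gamma.
\]
Toponogov's hinge inequality (valid since the ambient manifold has nonnegative sectional curvature and the relevant geodesics are unique) then guarantees
\[
\dist(a,b) \leq \|\tilde a - \tilde b\|.
\]
The intuition is simply that nonnegative curvature makes geodesic triangles ``no fatter'' than their Euclidean comparison triangles when the hinge is fixed.

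The second step is to evaluate the right-hand side using the Euclidean law of cosines:
\[
\|\tilde a - \tilde b\|^2 = \|\Log_c(a)\|^2 + \|\Log_c(b)\|^2 - 2\|\Log_c(a)\|\|\Log_c(b)\|\cos \gamma.
\]
Since the Riemannian inner product at $T_c M$ satisfies $\langle \Log_c(a), \Log_c(b)\rangle_c = \|\Log_c(a)\|\|\Log_c(b)\|\cos \gamma$ by the definition of $\gamma$, the right-hand side is exactly $\|\Log_c(a) - \Log_c(b)\|^2$ in the inner product of $T_c M$. Combining with the Toponogov bound gives the claim.

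The potentially delicate point, and really the only one worth checking, is confirming that the hypotheses of Toponogov's theorem are indeed met: we need geodesic uniqueness on the region containing the triangle (given by assumption) and a curvature lower bound (here $0$, which holds by hypothesis). Once these are in place, the result is essentially a one-line application of a classical theorem followed by an identification of the Euclidean norm squared with the inner-product expression in $T_c M$, so there is no real computational obstacle.
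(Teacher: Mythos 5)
Your argument is correct and is exactly the route the paper takes: the lemma is stated there as a classical consequence of Toponogov's theorem, and your hinge-comparison construction plus the Euclidean law of cosines (identifying the comparison side length with $\| \Log_c(a)-\Log_c(b)\|$ via the angle at $c$) is the standard way to fill in that citation. No gaps worth noting, since the assumed existence and uniqueness of the geodesics ensures the hinge sides are minimizing and the curvature lower bound $0$ is given.
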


In our case, it is true that the Grassmann manifold is of nonnegative sectional curvatures and a sufficient condition for the geodesics among two subspaces $\mathcal{X}$ and $\mathcal{Y}$ to be unique is
\begin{equation*}
    \textnormal{dist}(\mathcal{X},\mathcal{Y}) < \frac{\pi}{2}.
\end{equation*}

\paragraph{Block Rayleigh quotient.}
Our objective function for minimization is the block version of the Rayleigh quotient:
\begin{equation}\label{eq:block_f_on_Grassmann}
f(\mathcal{X}) = -\Tr(X^TAX) \text{ where $\mathcal{X} = \myspan(X) \in \Gr(n,p)$ s.t. $X^T X = I_p$}.
\end{equation}
This function has $\mathcal{V}_{\alpha}=\myspan(\begin{bmatrix} v_1 \hspace{2mm} \cdots \hspace{2mm} v_p \end{bmatrix})$ as global minimizer. This minimizer is unique on $\Gr(n,p)$ if and only if $\delta :=\lambda_p-\lambda_{p+1}>0$. The minimum of $f$ is $f(\mathcal{V}_{\alpha})$ and we denote it by $f^*$.

Given any differentiable function $f\colon\Gr(n,p) \rightarrow \mathbb{R}$, we can define its Riemannian gradient as the vector field that satisfies
\begin{equation*}
    df(\mathcal{X})(G) = \langle \grad f(\mathcal{X}),G \rangle_{\mathcal{X}},
\end{equation*}
for all $G \in T_{\mathcal{X}} \textnormal{Gr}(n,p)$.

For a given representative $X$ of $\mathcal{X}$, the Riemannian gradient of the block Rayleigh quotient satisfies 
\begin{equation*}\label{eq:grad f formula}
 \grad f(\mathcal{X}) = -2(I-X X^T) A X.
\end{equation*}
Using the notions of the Riemannian gradient and Levi-Civita connection, we can define also a Riemannian notion of Hessian. 
For the block Rayleigh quotient $f$, the Riemannian Hessian $\Hess f$ evaluated as bilinear form satisfies
\begin{equation}\label{eq:Hessian_f_inner_product}
  \Hess f(\mathcal{X})[{G},{G}] = 2 \langle G, G X^T A X - A G \rangle, 
\end{equation}
for  $G \in T_{\mathcal{X}} \Gr(n,p)$; see \cite[\S4.4]{edelmanGeometryAlgorithmsOrthogonality1999} or \cite[\S6.4.2]{absilOptimizationAlgorithmsMatrix2008}.

As mentioned in the introduction, the paper \cite{alimisis2022geodesic} provides several properties of the block Rayleigh quotient $f$, that are useful for our analysis:
\begin{proposition}[Smoothness]\label{prop:smoothness}
The eigenvalues of the Riemannian Hessian of $f$ on $\Gr(n,p)$ are upper bounded by  $\gamma := 2 (\lambda_1 - \lambda_n)$. 
\end{proposition}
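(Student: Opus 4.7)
The plan is to start from the closed-form expression for the Hessian evaluated as a quadratic form, which is already provided in equation \eqref{eq:Hessian_f_inner_product}, and bound each of the two terms separately using standard eigenvalue inequalities for symmetric matrices. The payoff is that we will not even need to exploit the tangent space constraint $X^T G = 0$: the advertised upper bound $\gamma = 2(\lambda_1 - \lambda_n)$ is coarse enough to follow from purely matrix-trace considerations.

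Concretely, I would expand the inner product in \eqref{eq:Hessian_f_inner_product} using the trace definition to rewrite
\begin{equation*}
\tfrac{1}{2}\Hess f(\mathcal{X})[G,G] = \Tr(G^T G \cdot X^T A X) - \Tr(G^T A G).
\end{equation*}
For the first term, note that $X^T A X$ is a $p \times p$ symmetric matrix whose eigenvalues are Rayleigh quotients of $A$ restricted to the columns of $X$; by the Courant--Fischer min-max characterization, they lie in $[\lambda_n, \lambda_1]$. Combined with the fact that $G^T G$ is positive semi-definite, von Neumann's trace inequality (or equivalently $\Tr(PQ) \le \lambda_{\max}(Q)\,\Tr(P)$ when $P \succeq 0$ and $Q$ is symmetric) gives
\begin{equation*}
\Tr(G^T G \cdot X^T A X) \le \lambda_1 \,\Tr(G^T G) = \lambda_1 \|G\|^2.
\end{equation*}
For the second term, the same inequality applied to the symmetric matrix $A$ and the positive semi-definite matrix $G G^T$ yields $\Tr(G^T A G) = \Tr(A\, G G^T) \ge \lambda_n \|G\|^2$, so $-\Tr(G^T A G) \le -\lambda_n \|G\|^2$.

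Adding the two bounds and multiplying by $2$ gives
\begin{equation*}
\Hess f(\mathcal{X})[G,G] \le 2(\lambda_1 - \lambda_n) \|G\|^2 = \gamma \|G\|^2
\end{equation*}
for every $G \in T_{\mathcal{X}} \Gr(n,p)$, which is exactly the claimed spectral upper bound on $\Hess f(\mathcal{X})$ as a self-adjoint operator on the tangent space. There is no real obstacle here: the only subtlety is to recognize that the Rayleigh quotients of $A$ on the $p$-dimensional subspace $\mathcal{X}$ are trapped in $[\lambda_n,\lambda_1]$ regardless of which subspace is chosen, so the bound holds uniformly in $\mathcal{X}$. A sharper bound (e.g.\ involving $\lambda_1 - \lambda_{p+1}$ or $\lambda_p - \lambda_n$) could be obtained by using $X^T G = 0$ to restrict both terms to the orthogonal complement of $\myspan(X)$, but this is not needed for the statement as given.
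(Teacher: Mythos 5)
Your proof is correct: starting from the Hessian formula \eqref{eq:Hessian_f_inner_product}, bounding $\Tr(G^TG\,X^TAX)\le\lambda_1\|G\|^2$ and $-\Tr(G^TAG)\le-\lambda_n\|G\|^2$ is exactly the standard argument, and it matches the route taken in the reference \cite{alimisis2022geodesic} that the paper cites for this proposition (the paper itself gives no proof). Nothing further is needed.
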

As it is folklore, such a bound on the eigenvalues of the Riemannian Hessian can be used to derive many other inequalities, including the following:
\begin{equation*}\label{eq:quadratic_upper_bound}
    f(\mathcal{X}) \leq f(\mathcal{Y})+ \langle \grad f(\mathcal{Y}), \Log_{\mathcal{Y}} (\mathcal{X}) \rangle+\frac{\gamma}{2} \dist^2(\mathcal{X},\mathcal{Y}), 
\end{equation*}
for any $\mathcal{X}, \mathcal{Y} \in \Gr(n,p)$ such that $\Log_{\mathcal{X}}(\mathcal{Y})$ is well-defined. The last inequality gives a useful bound for the case that $\mathcal{X}$ is obtained from $\mathcal{Y}$ via a gradient step:
\begin{proposition}
    If $\mathcal{X}=\Exp_\mathcal{Y}(-\eta \, \textnormal{grad} f(\mathcal{Y}))$, $\eta>0$, then
    \begin{equation*}
        f(\mathcal{X}) \leq f(\mathcal{Y}) - \left(\eta-\frac{\gamma\eta^2}{2} \right) \| \textnormal{grad}f(\mathcal{Y}) \|^2.
    \end{equation*}
\end{proposition}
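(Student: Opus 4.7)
The plan is to apply the quadratic upper bound stated in the paragraph immediately preceding this proposition (the standard consequence of the Hessian eigenvalue bound from Proposition \ref{prop:smoothness}), with the roles of the two subspaces chosen so that the $\Log$ term collapses into an explicit multiple of $\grad f(\mathcal{Y})$. Concretely, I will swap the roles of $\mathcal{X}$ and $\mathcal{Y}$ in that bound, writing
\begin{equation*}
    f(\mathcal{X}) \leq f(\mathcal{Y}) + \langle \grad f(\mathcal{Y}), \Log_{\mathcal{Y}}(\mathcal{X}) \rangle_{\mathcal{Y}} + \frac{\gamma}{2}\dist^2(\mathcal{X},\mathcal{Y}),
\end{equation*}
which is legitimate provided $\Log_{\mathcal{Y}}(\mathcal{X})$ is well-defined. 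This preliminary well-definedness check is the only mildly subtle step: since $\mathcal{X} = \Exp_{\mathcal{Y}}(-\eta\,\grad f(\mathcal{Y}))$ and the exponential map on $\Gr(n,p)$ is invertible on the domain described in \eqref{eq:inj_exp}, it suffices to verify that $\|\eta\,\grad f(\mathcal{Y})\|_2 < \pi/2$. I would state this as a mild implicit assumption on $\eta$ (which is automatically satisfied in the small-step-size regime that any descent analysis cares about anyway).

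Granted that, I then substitute the two identities that come for free from the definition of the exponential map: first, $\Log_{\mathcal{Y}}(\mathcal{X}) = -\eta\,\grad f(\mathcal{Y})$, so the inner product evaluates to $-\eta\,\|\grad f(\mathcal{Y})\|^2$; and second, the geodesic distance is exactly the Riemannian norm of its generating tangent vector, so $\dist(\mathcal{X},\mathcal{Y}) = \eta\,\|\grad f(\mathcal{Y})\|$, giving $\tfrac{\gamma}{2}\dist^2(\mathcal{X},\mathcal{Y}) = \tfrac{\gamma\eta^2}{2}\|\grad f(\mathcal{Y})\|^2$. Plugging both into the quadratic upper bound and collecting terms yields exactly
\begin{equation*}
    f(\mathcal{X}) \leq f(\mathcal{Y}) - \left(\eta - \frac{\gamma\eta^2}{2}\right)\|\grad f(\mathcal{Y})\|^2,
\end{equation*}
which is the desired inequality.

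So the proof is essentially a one-line computation downstream of the smoothness-type quadratic upper bound; there is no real obstacle. If one wished to be pedantic, the only thing worth spelling out is why $\Log_{\mathcal{Y}}(\mathcal{X}) = -\eta\,\grad f(\mathcal{Y})$, and this follows from the fact that $\Exp_{\mathcal{Y}}$ is a diffeomorphism between a neighborhood of $0 \in T_{\mathcal{Y}}\Gr(n,p)$ and its image, with $\Log_{\mathcal{Y}}$ as its inverse, combined with the fact that by formula \eqref{eq:log formula} and the preceding injectivity discussion the tangent vector that generates a short geodesic is unique. The proof is essentially a verbatim port of the standard Euclidean descent lemma to the Riemannian setting, where the geodesic distance plays the role of Euclidean distance and the Riemannian logarithm plays the role of vector subtraction.
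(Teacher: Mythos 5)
Your argument is correct and is exactly the route the paper intends: the proposition is stated as an immediate consequence of the quadratic upper bound, obtained by substituting $\Log_{\mathcal{Y}}(\mathcal{X}) = -\eta\,\grad f(\mathcal{Y})$ and $\dist(\mathcal{X},\mathcal{Y}) = \eta\,\|\grad f(\mathcal{Y})\|$. Your remark on the injectivity domain is a reasonable (and harmless) caveat; one could even dispense with it by Taylor-expanding $t \mapsto f(\Exp_{\mathcal{Y}}(-t\eta\,\grad f(\mathcal{Y})))$ along the geodesic using the Hessian bound of Proposition \ref{prop:smoothness}, but this is not needed in the regime the paper works in.
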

\label{eq:func_val_dec_exp}
The quantity $\eta-\frac{\gamma\eta^2}{2}$ is maximized for $\eta=\frac{1}{\gamma}$, in which case the bound takes the form 
\begin{equation}
\label{eq:func_val_decrease}
    f(\mathcal{X}) \leq f(\mathcal{Y}) - \frac{1}{2\gamma} \| \textnormal{grad}f(\mathcal{Y}) \|^2.
\end{equation}

Such a function value decrease is a mild property of descent algorithms that can be satisfied even when the gradient step is taken using the retraction $\textnormal{Retr}_\mathcal{X}$ from~\eqref{eq:def_retr_qr} and a step-size obtained via exact line search. Remarkably, such a line search can be implemented very efficiently, as shown in \cite{alimisis2023gradient}.

\begin{proposition}[Lemma 3 in \cite{alimisis2023gradient}]
\label{prop:func_val_decrease_retr}
    If $\mathcal{X}=\textnormal{Retr}_\mathcal{Y}(-\eta_{\text{optimal}} \cdot \textnormal{grad} f(\mathcal{Y}))$, where $\eta_{\text{optimal}}$ is obtained via an exact line search, then
    \begin{equation*}
        f(\mathcal{X}) \leq f(\mathcal{Y}) - \frac{2}{5 \gamma} \| \textnormal{grad}f(\mathcal{Y}) \|^2.
    \end{equation*}
\end{proposition}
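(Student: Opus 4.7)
The plan is to exhibit one specific step size for which the retraction step already gives the claimed decrease, and then invoke the optimality of the line search. Since $\eta_{\textnormal{optimal}}$ is by definition the minimizer of $\eta \mapsto f(\textnormal{Retr}_\mathcal{Y}(-\eta\,\textnormal{grad}f(\mathcal{Y})))$, any specific step size provides an upper bound on $f(\mathcal{X})$. The task therefore reduces to a one-dimensional analysis of $\phi(t) := f(\textnormal{Retr}_\mathcal{Y}(-t g))$, where $g := \textnormal{grad}f(\mathcal{Y})$, culminating in a quadratic majorant whose minimum yields $\tfrac{2}{5\gamma}\|g\|^2$.

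To set this up, I would use that on $\Gr(n,p)$ the QR-retraction $\textnormal{qf}(Y-tg)$ represents the same subspace as the polar-style orthonormalization $(Y-tg)(I_p + t^2 g^T g)^{-1/2}$, since these representatives differ by an orthogonal matrix on the right and $f$ is invariant under such changes. Combined with $Y^T g = 0$ and the identity $g^T A Y = -\tfrac12 g^T g$, which follows from $g = -2(I-YY^T)AY$, this yields the closed form
\[
\phi(t) = -\textnormal{Tr}\bigl[(I_p + t^2 g^T g)^{-1}\bigl(Y^T A Y + t\, g^T g + t^2 g^T A g\bigr)\bigr].
\]
A direct Taylor expansion then gives $\phi(0) = f(\mathcal{Y})$, $\phi'(0) = -\|g\|^2$ (consistent with first-order agreement of $\textnormal{Retr}$ and $\textnormal{Exp}$), and $\phi''(0) = \textnormal{Hess}f(\mathcal{Y})[g,g] \leq \gamma\|g\|^2$ by Proposition~\ref{prop:smoothness}.

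The heart of the argument is to promote this local Taylor data to a global quadratic upper bound of the form $\phi(t) \leq f(\mathcal{Y}) - t\|g\|^2 + \tfrac{L}{2} t^2 \|g\|^2$ for $t \geq 0$, with $L \leq \tfrac{5\gamma}{4}$. Minimizing the quadratic majorant then gives step size $\eta^\star = 1/L$ and decrease $\tfrac{1}{2L}\|g\|^2 \geq \tfrac{2}{5\gamma}\|g\|^2$, which is the stated bound. The constant $L$ can be extracted from the explicit form of $\phi(t)$ above, using that the matrix $(I_p + t^2 g^T g)^{-1}$ is monotonically nonincreasing in $t$ in the Loewner order and that $g^T A g \preceq \lambda_1\, g^T g$; together with the Hessian bound, routine estimates of the rational expression should give a global quadratic majorant.

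The main obstacle is to tighten the estimates so that the constant is exactly $\tfrac{5\gamma}{4}$. Note that a purely second-order Taylor argument, ignoring the cubic-and-higher remainder, reproduces the geodesic bound $\tfrac{1}{2\gamma}\|g\|^2$ of \eqref{eq:func_val_decrease}, so the looser factor $\tfrac{2}{5\gamma}$ in the proposition must come entirely from the remainder along the retraction curve. I expect the sharp argument to either exploit a carefully chosen interval of $t$ on which $\phi''$ is controlled by $\tfrac{5\gamma}{4}\|g\|^2$, or to directly majorize $\phi(t)$ by a scalar rational function built from the eigenvalues of $g^T g$. Once the quadratic majorant is in place, the proof concludes with the one-line minimization in $t$.
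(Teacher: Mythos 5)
Your setup is sound and matches the natural route to this result (which the paper itself does not prove — it is imported verbatim as Lemma~3 of \cite{alimisis2023gradient}): reduce to bounding $\phi(t)=f(\textnormal{Retr}_\mathcal{Y}(-tg))$ at one well-chosen $t$ via optimality of the exact line search, replace the QR factor by the polar representative $(Y-tg)(I_p+t^2g^Tg)^{-1/2}$ of the same subspace, and your closed form for $\phi(t)$, together with $\phi(0)=f(\mathcal{Y})$, $\phi'(0)=-\|g\|^2$ and $\phi''(0)=\Hess f(\mathcal{Y})[g,g]\leq\gamma\|g\|^2$, is correct (I checked $g^TAY=-\tfrac12 g^Tg$ and the expansion). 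However, the proof has a genuine gap exactly where the lemma's content lies: the quadratic majorant $\phi(t)\leq f(\mathcal{Y})-t\|g\|^2+\tfrac{L}{2}t^2\|g\|^2$ with $L\leq\tfrac{5\gamma}{4}$ is never established. You explicitly defer it to ``routine estimates'' and ``I expect the sharp argument to\dots'', but this is not routine: the Taylor data at $t=0$ only controls the second-order term, and the constant $\tfrac{2}{5\gamma}$ (rather than $\tfrac{1}{2\gamma}$) must absorb the higher-order remainder of the rational expression. Concretely, expanding $(I_p+t^2g^Tg)^{-1}=I-t^2g^Tg(I_p+t^2g^Tg)^{-1}$ produces, beyond the terms $-t\Tr(g^Tg)$ and $t^2\bigl(\Tr(g^TgY^TAY)-\Tr(g^TAg)\bigr)\leq\tfrac{\gamma}{2}t^2\|g\|^2$, additional contributions of order $t^3$ and $t^4$ such as $t^3\Tr\bigl((g^Tg)^2(I_p+t^2g^Tg)^{-1}\bigr)$ and the mismatch between $\Tr\bigl(g^Tg(I_p+t^2g^Tg)^{-1}Y^TAY\bigr)$ and $\Tr\bigl((I_p+t^2g^Tg)^{-1}g^TAg\bigr)$. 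Bounding these at the relevant step size $t^\star\approx 1/\gamma$ requires extra inequalities relating $\|g\|$ (or $\|g^Tg\|_2$) to the spectral data of $A$ and to $\gamma$, none of which are identified in your proposal; without them, it is not even clear that your asserted bound holds with $L\leq\tfrac{5\gamma}{4}$, globally or at $t^\star$.

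In short: the reduction, the closed form and the Taylor data are correct and would be the right first half of a proof, but the quantitative heart — producing the specific constant $\tfrac{2}{5\gamma}$ by controlling the non-quadratic part of $\phi$ — is missing, and it is precisely this estimate that Lemma~3 of \cite{alimisis2023gradient} supplies. To close the gap you would need to carry out the bound at an explicit step size (e.g.\ $t=1/\gamma$ or a nearby choice), with explicit control of $\Tr\bigl((g^Tg)^2\bigr)$, $\Tr(g^Tg\,g^TAg)$ and the norm $\|g^Tg\|_2$ in terms of $\gamma\|g\|^2$, rather than appealing to the local Hessian bound alone.
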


We now examine some deeper facts about the block Rayleigh quotient, which are analysed in detail in \cite{alimisis2022geodesic}. We begin with the quadratic growth condition (Proposition 5 in \cite{alimisis2022geodesic}):
\begin{proposition}
\label{prop:quadratic_growth}
Let $0 \leq \theta_1 \leq \cdots \leq \theta_k < \pi/2$ be the principal angles between the subspaces $\mathcal{X}$ and $\mathcal{V}_\alpha$. The function $f$ satisfies
$$f(\mathcal{X})-f^* \geq c_Q \, \delta \, \dist^2(\mathcal{X},\mathcal{V_{\alpha}})$$
where $c_Q = 4/\pi^2 > 0.4$.
\end{proposition}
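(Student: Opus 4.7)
The plan is to reduce the inequality to a scalar computation involving the principal angles $\theta_1, \ldots, \theta_p$ between $\mathcal{X}$ and $\mathcal{V}_\alpha$, and then finish with Jordan's inequality $\sin\theta \geq (2/\pi)\theta$ on $[0,\pi/2]$.

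First I would diagonalize $A = V\Lambda V^T$ and set $W = V^T X$. Since $X$ has orthonormal columns, so does $W$, and writing $w_j = \|W(j,:)\|_2^2$ (the $j$th diagonal entry of $WW^T$), one gets $\Tr(X^T A X) = \sum_{j=1}^n \lambda_j w_j$ with the constraints $0 \leq w_j \leq 1$ and $\sum_{j=1}^n w_j = p$. From this representation, and with $f^* = -\sum_{j=1}^p \lambda_j$,
\begin{equation*}
f(\mathcal{X}) - f^* \;=\; \sum_{j=1}^p \lambda_j(1-w_j) \;-\; \sum_{j=p+1}^n \lambda_j w_j.
\end{equation*}
The constraint $\sum_j w_j = p$ forces $\sum_{j=1}^p(1-w_j) = \sum_{j=p+1}^n w_j =: s$, so lower-bounding $\lambda_j \geq \lambda_p$ in the first sum and upper-bounding $\lambda_j \leq \lambda_{p+1}$ in the second immediately yields $f(\mathcal{X}) - f^* \geq (\lambda_p - \lambda_{p+1}) s = \delta \, s$.

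Next I would identify $s$ geometrically. The upper $p \times p$ block of $W$ is $V_\alpha^T X$, whose singular values are $\cos\theta_1, \ldots, \cos\theta_p$ by the definition in~\eqref{eq:SVD_for_principal_angles}. Therefore $\sum_{j=1}^p w_j = \|V_\alpha^T X\|_F^2 = \sum_{i=1}^p \cos^2\theta_i$, and hence
\begin{equation*}
s \;=\; p - \sum_{i=1}^p \cos^2\theta_i \;=\; \sum_{i=1}^p \sin^2\theta_i.
\end{equation*}
Combined with the formula $\dist^2(\mathcal{X}, \mathcal{V}_\alpha) = \sum_{i=1}^p \theta_i^2$ from~\eqref{eq:distance_with_Log_and_angles}, it remains to verify the scalar inequality $\sin^2\theta \geq (4/\pi^2)\theta^2$ on $[0,\pi/2]$, which is Jordan's inequality squared. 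Summing over $i$ gives $s \geq (4/\pi^2)\dist^2(\mathcal{X},\mathcal{V}_\alpha)$, so $f(\mathcal{X}) - f^* \geq (4/\pi^2)\, \delta \, \dist^2(\mathcal{X},\mathcal{V}_\alpha)$, as claimed.

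There is no real obstacle here; the only subtlety is the bookkeeping step that turns $\Tr(X^T A X)$ into a weighted sum $\sum \lambda_j w_j$ with $0 \leq w_j \leq 1$ and $\sum w_j = p$, because this is exactly what allows the gap $\delta$ to be extracted by a rearrangement argument. The hypothesis $\theta_p < \pi/2$ ensures $\Log_{\mathcal{X}}(\mathcal{V}_\alpha)$ is well-defined and that Jordan's inequality is applied strictly inside the valid range; the identity $f(\mathcal{X}) - f^* \geq \delta s$ does not itself require any bound on the principal angles.
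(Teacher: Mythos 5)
Your proof is correct. The paper itself does not prove this proposition but imports it as Proposition 5 of \cite{alimisis2022geodesic}; your argument --- reducing $\Tr(X^TAX)$ to a weighted sum with weights $w_j\in[0,1]$, $\sum_j w_j=p$, extracting the gap $\delta$ by the rearrangement bound, identifying $s=\sum_i\sin^2\theta_i$ through the principal-angle SVD, and finishing with Jordan's inequality $\sin\theta\geq(2/\pi)\theta$ --- is precisely the standard derivation behind that cited result, so it matches the intended proof in both substance and constants.
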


A central property of the block Rayleigh quotient is the so-called geodesic weak-strong convexity (Theorem 8 in \cite{alimisis2022geodesic}). It reads as follows:
\begin{proposition}[Weak-strong convexity]\label{thm:weak_strong_convex}
Let $0 \leq \theta_1 \leq \cdots \leq \theta_p < \pi/2$ be the principal angles between the subspaces $\mathcal{X}$ and $\mathcal{V}_\alpha$. Then, $f$ satisfies
\begin{equation*}
   f(\mathcal{X})-f^* \leq \frac{1}{a(\mathcal{X})} \langle \grad f(\mathcal{X}), -\Log_{\mathcal{X}}(\mathcal{V}_{\alpha}) \rangle - c_Q \delta \, \dist^2 (\mathcal{X},\mathcal{V}_{\alpha})
\end{equation*}
with $a(\mathcal{X})= \theta_p / \tan \theta_p >0$, $c_Q = 4/\pi^2 > 0.4$, $\delta = \lambda_p - \lambda_{p+1} \geq 0$ and $f^*=f(\mathcal{V}_{\alpha})$.
\end{proposition}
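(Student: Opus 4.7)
The plan is to diagonalize everything via the CS decomposition aligned with the principal angles between $\mathcal{X}$ and $\mathcal{V}_\alpha$, reduce the inequality to a pointwise trigonometric estimate in each $\theta_i$, and then invoke the quadratic growth property of Proposition~\ref{prop:quadratic_growth} to absorb the $c_Q \delta \dist^2$ term.

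Concretely, after rotating the representative $X$ (which leaves $\mathcal{X}$ unchanged and so leaves $f$, $\grad f$, $\Log_\mathcal{X}(\mathcal{V}_\alpha)$ invariant), I can write $X = V_\alpha U_1 \cos\theta + V_\beta U_2 \sin\theta$, where $\theta = \diag(\theta_1,\ldots,\theta_p)$, $U_1 \in \mathbb{R}^{p\times p}$ is orthogonal, and $U_2 \in \mathbb{R}^{(n-p)\times p}$ has orthonormal columns. Expanding $f(\mathcal{X}) = -\Tr(X^T A X)$ using $V_\alpha^T A V_\beta = 0$ yields
$$f(\mathcal{X}) - f^* = \sum_{i=1}^{p} \sin^2\theta_i \, \tilde\mu_i, \qquad \tilde\mu_i := (U_1^T \Lambda_\alpha U_1)_{ii} - (U_2^T \Lambda_\beta U_2)_{ii}.$$
Because $U_1^T \Lambda_\alpha U_1 \succeq \lambda_p I_p$ and $U_2^T \Lambda_\beta U_2 \preceq \lambda_{p+1} I_p$, every $\tilde\mu_i \geq \delta$. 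A parallel calculation, using the closed-form SVD for $\Log_\mathcal{X}(\mathcal{V}_\alpha)$ from~\eqref{eq:log formula} together with $\grad f(\mathcal{X}) = -2(I-XX^T)AX$, gives
$$\langle \grad f(\mathcal{X}), -\Log_\mathcal{X}(\mathcal{V}_\alpha) \rangle = 2 \sum_{i=1}^{p} \cos\theta_i \sin\theta_i \theta_i \, \tilde\mu_i,$$
while $\dist^2(\mathcal{X},\mathcal{V}_\alpha) = \sum_i \theta_i^2$ by~\eqref{eq:distance_with_Log_and_angles}.

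With these expressions in hand, the desired inequality reduces to the pointwise bound $2 \cos\theta_i \sin\theta_i \theta_i / a(\mathcal{X}) \geq 2 \sin^2\theta_i$ for each $i$. Substituting $a(\mathcal{X}) = \theta_p / \tan\theta_p$ and dividing through, this is equivalent to $\theta_i \cot\theta_i \geq \theta_p \cot\theta_p$, which holds because $\theta \mapsto \theta \cot\theta$ is decreasing on $(0,\pi/2)$ and $\theta_i \leq \theta_p$. Combining,
$$\frac{1}{a(\mathcal{X})}\langle \grad f(\mathcal{X}), -\Log_\mathcal{X}(\mathcal{V}_\alpha) \rangle - (f(\mathcal{X})-f^*) \;\geq\; \sum_i \sin^2\theta_i \, \tilde\mu_i \;=\; f(\mathcal{X})-f^* \;\geq\; c_Q \delta \dist^2(\mathcal{X},\mathcal{V}_\alpha),$$
where the last step applies Proposition~\ref{prop:quadratic_growth}. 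Rearranging delivers the weak-strong convexity bound. The main technical hurdle is the bookkeeping around the CS decomposition: since $V_\alpha$ is fixed, $V_\alpha^T X$ cannot generally be made diagonal, so one must argue that only the diagonal entries of the rotated spectra $U_1^T \Lambda_\alpha U_1$ and $U_2^T \Lambda_\beta U_2$ enter, and that they are controlled by $\delta$ through the PSD orderings above.
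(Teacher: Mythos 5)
Your proof is correct. Note that the paper itself does not prove this proposition; it imports it verbatim as Theorem 8 of \cite{alimisis2022geodesic}, so there is no internal proof to compare against — what you have written is a self-contained reconstruction in the spirit of that reference (principal-angle expansion plus quadratic growth). I checked the key computations: with the aligned representative $X = V_\alpha U_1 \cos\theta + V_\beta U_2 \sin\theta$ one indeed gets $\Log_{\mathcal{X}}(\mathcal{V}_\alpha) = (V_\alpha U_1 \sin\theta - V_\beta U_2 \cos\theta)\,\theta$ (its exponential recovers $\myspan(V_\alpha U_1)=\mathcal{V}_\alpha$, and its norm is $\|\theta\|_2$), and pairing with $\grad f(\mathcal{X}) = -2(I-XX^T)AX$ — the $X(X^TAX)$ part dying against the tangent vector — gives exactly $\langle \grad f(\mathcal{X}), -\Log_{\mathcal{X}}(\mathcal{V}_\alpha)\rangle = 2\sum_i \theta_i \sin\theta_i\cos\theta_i\,\tilde\mu_i$ with $\tilde\mu_i \geq \delta \geq 0$ by the PSD orderings you invoke. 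The reduction to $\theta_i\cot\theta_i \geq \theta_p\cot\theta_p$ is valid since $t\cot t$ is decreasing on $(0,\pi/2)$, and then $\tfrac{1}{a(\mathcal{X})}\langle \grad f, -\Log_{\mathcal{X}}(\mathcal{V}_\alpha)\rangle \geq 2(f(\mathcal{X})-f^*)$ combined with Proposition~\ref{prop:quadratic_growth} yields the claim with the stated constant $c_Q\delta$. The only loose end is the regime $2p>n$, where $U_2\in\mathbb{R}^{(n-p)\times p}$ cannot have $p$ orthonormal columns; this is harmless because at least $2p-n$ principal angles vanish and the affected indices contribute nothing (both $\sin^2\theta_i$ and $\theta_i\sin\theta_i\cos\theta_i$ are zero there), but a sentence acknowledging it would make the bookkeeping airtight. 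Your remark that only the diagonal entries of $U_1^T\Lambda_\alpha U_1$ and $U_2^T\Lambda_\beta U_2$ enter, controlled by $\lambda_p$ and $\lambda_{p+1}$ respectively, is precisely the right way to handle the fact that $V_\alpha^T X$ cannot be diagonalized with $V_\alpha$ fixed.
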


The term weak-strong convexity comes from the fact that this inequality is similar to the one of strong convexity but it holds only between an arbitrary $\mathcal{X}$ and the optimum $\mathcal{V}_{\alpha}$ (and not between any pair of points). Also, the upper bound is weaker in the sense that it features the constant $1/a(\mathcal{X})$ which is in general larger than $1$.

In the rest of the paper, we denote $\mu:=2 c_Q \delta$ (this quantity plays the role of a strong convexity constant). Then, the inequality of Proposition \ref{thm:weak_strong_convex} takes the form:
\begin{equation}
\label{eq:weak-strong-conv}
    f(\mathcal{X})-f^* \leq \frac{1}{a(\mathcal{X})} \langle \grad f(\mathcal{X}), -\Log_{\mathcal{X}}(\mathcal{V}_{\alpha}) \rangle - \frac{\mu}{2} \dist^2 (\mathcal{X},\mathcal{V}_{\alpha}).
\end{equation}

\section{Weak estimate sequence}

For reasons related to both the weak nature of geodesic convexity and the non-linearity of the working domain (Grassmann manifold), 
we introduce a \emph{weaker notion} of the classical estimate sequence than \cite{nesterov1983method}. This is the strategy in both \cite{bu2020note} and \cite{zhang2018towards}.

\begin{definition}
    A weak estimate sequence for $f$ is a sequence of functions $( \phi_k )_{k=0}^{\infty}$ defined on the Grassmann manifold, and a sequence of positive scalars $( \tau_k)_{k=0}^{\infty}$, such that
    \begin{equation*}
        \lim_{k \rightarrow \infty} \tau_k = 0 \hspace{5mm} \text{and} \hspace{5mm} \phi_k(\mathcal{V_{\alpha}}) \leq (1-\tau_k) f(\mathcal{V_{\alpha}})+\tau_k \phi_0(\mathcal{V_{\alpha}})
    \end{equation*}
    where $\mathcal{V_{\alpha}} = \textnormal{argmin}_{\mathcal{X} \in \Gr(n,p)} f(\mathcal{X})$.
We denote such a weak estimate sequence by the pair $(\tau_k,\phi_k)$.
\end{definition}

The difference with the classical definition is that the inequality holds only at the optimum $\mathcal{V}_{\alpha}$ and not at any point.

We utilize weak estimate sequences in the following way:

\begin{proposition}\label{prop:utility_weak_est_seq}
\label{prop:bound*}
    If for some sequence of subspaces $(\mathcal{X}_k )_{k=0}^{\infty}$, we have
    \begin{equation*}
        f(\mathcal{X}_k) \leq \phi_k^* : = \min_{\mathcal{X} \in \Gr(n,p)} \phi_k(\mathcal{X})
    \end{equation*}
    where $(\phi_k, \tau_k)$ is a weak estimate sequence, then 
    \begin{equation*}
        f(\mathcal{X}_k)-f^* \leq \tau_k (\phi_0(\mathcal{V}_{\alpha})-f^*).
    \end{equation*}
\end{proposition}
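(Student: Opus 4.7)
The plan is to chain three elementary inequalities, exploiting that the weak estimate sequence property is required to hold precisely at $\mathcal{V}_\alpha$, which is the one point where we need to evaluate $\phi_k$. This is exactly the design motivation for the weak notion introduced in the preceding definition, so the proof should be short.

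First I would start from the hypothesis $f(\mathcal{X}_k) \leq \phi_k^*$ and use the definition of $\phi_k^*$ as a global minimum to write $\phi_k^* \leq \phi_k(\mathcal{V}_\alpha)$. Next, I would apply the defining inequality of the weak estimate sequence at the point $\mathcal{V}_\alpha$:
\begin{equation*}
   \phi_k(\mathcal{V}_\alpha) \leq (1-\tau_k) f(\mathcal{V}_\alpha) + \tau_k \phi_0(\mathcal{V}_\alpha) = (1-\tau_k) f^* + \tau_k \phi_0(\mathcal{V}_\alpha),
\end{equation*}
using $f^* = f(\mathcal{V}_\alpha)$. Combining the three gives $f(\mathcal{X}_k) \leq (1-\tau_k) f^* + \tau_k \phi_0(\mathcal{V}_\alpha)$, and subtracting $f^*$ from both sides yields the claimed bound $f(\mathcal{X}_k) - f^* \leq \tau_k (\phi_0(\mathcal{V}_\alpha) - f^*)$.

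There is no genuine obstacle: the argument is linear algebra on scalars and does not interact with the Riemannian geometry at all. The only subtlety to flag is that the weak estimate sequence definition only guarantees the inequality $\phi_k(\mathcal{X}) \leq (1-\tau_k) f(\mathcal{X}) + \tau_k \phi_0(\mathcal{X})$ at $\mathcal{X} = \mathcal{V}_\alpha$, but this is precisely where it is invoked, so nothing more is needed. The $\tau_k \to 0$ property of a weak estimate sequence is not required for this proposition; it will only become relevant later when one concludes an explicit rate of convergence for $f(\mathcal{X}_k) \to f^*$ from the bound just derived.
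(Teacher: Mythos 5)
Your proposal is correct and follows exactly the same chain of inequalities as the paper's proof: $f(\mathcal{X}_k) \leq \phi_k^* \leq \phi_k(\mathcal{V}_\alpha) \leq (1-\tau_k) f^* + \tau_k \phi_0(\mathcal{V}_\alpha)$, followed by rearranging. Nothing is missing.
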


\begin{proof}
    The proof is direct by the fact that
    \begin{align*}
        f(\mathcal{X}_k) &\leq \min_{\mathcal{X} \in \Gr(n,p)} \phi_k(\mathcal{X}) \leq \phi_k(\mathcal{V}_{\alpha}) \\
        & \leq (1-\tau_k) f(\mathcal{V}_{\alpha})+\tau_k \phi_0(\mathcal{V}_{\alpha})=(1-\tau_k) f^*+\tau_k \phi_0(\mathcal{V}_{\alpha}).
    \end{align*}
    Rearranging we get the result.        
\end{proof}

Now, we describe how to construct a weak estimate sequence for our geodesically weak-strongly-convex function $f$ defined in~\eqref{eq:block_f_on_Grassmann}. The result below is valid for any function that satisfies~\ref{eq:weak-strong-conv} but we phrase it directly for $f$ for simplicity. 

\begin{proposition} \label{prop:weak_estimate_sequence_general}
    Let $f$ be~\eqref{eq:block_f_on_Grassmann}.
    Choose an arbitrary function $\phi_0 \colon \Gr(n,p) \to \mathbb{R}$ and an arbitrary sequence $(\mathcal{Y}_k)_{k=0}^\infty$ of subspaces in $\Gr(n,p)$. We also choose a sequence $(\alpha_k)_{k=0}^\infty$ of scalars such that $\alpha_k \in (0,1)$ and $\Sigma_{k=0}^{\infty} \alpha_k = \infty$.

    Define $\tau_0=1$. For all $k \geq 0$, let $B_k$ be a lower bound for $a(\mathcal{Y}_k)$ (as defined in Proposition~\ref{thm:weak_strong_convex}) and define
    \begin{align*}
        \tau_{k+1}&:=(1-\alpha_k) \tau_k \\ 
        \bar \phi_{k+1}(\mathcal{X})&:=(1-\alpha_k) \phi_k(\mathcal{X}) \\
          &\qquad +\alpha_k \left(f(\mathcal{Y}_k)+\frac{1}{B_k} \langle \textnormal{grad}f(\mathcal{Y}_k), \textnormal{Log}_{\mathcal{Y}_k}(\mathcal{X}) \rangle + \frac{\mu}{2} \textnormal{dist}^2 (\mathcal{Y}_k,\mathcal{X}) \right)        
    \end{align*}
    If $\phi_{k}(\mathcal{V}_{\alpha}) \leq \bar \phi_{k}(\mathcal{V}_{\alpha})$ for all $k \geq 0$, then the pair $(\tau_k,\phi_k)$ is a weak estimate sequence for $f$.
\end{proposition}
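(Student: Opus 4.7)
The plan is to check the two defining properties of a weak estimate sequence separately. For the first, $\tau_k \to 0$, the recursion $\tau_{k+1}=(1-\alpha_k)\tau_k$ with $\tau_0=1$ gives $\tau_k = \prod_{i=0}^{k-1}(1-\alpha_i)$. Taking logarithms and applying the elementary bound $\log(1-\alpha_i)\le -\alpha_i$ yields $\log \tau_k \le -\sum_{i=0}^{k-1}\alpha_i \to -\infty$, by the divergence hypothesis $\sum_k \alpha_k = \infty$.

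For the second property $\phi_k(\mathcal{V}_\alpha) \le (1-\tau_k) f^* + \tau_k \phi_0(\mathcal{V}_\alpha)$ I would proceed by induction on $k$. The base case $k=0$ is trivial since $\tau_0=1$. For the inductive step, the assumption $\phi_{k+1}(\mathcal{V}_\alpha) \le \bar\phi_{k+1}(\mathcal{V}_\alpha)$ combined with the definition of $\bar\phi_{k+1}$ at $\mathcal{X}=\mathcal{V}_\alpha$ yields
\begin{equation*}
\phi_{k+1}(\mathcal{V}_\alpha) \le (1-\alpha_k)\phi_k(\mathcal{V}_\alpha) + \alpha_k\Bigl[f(\mathcal{Y}_k) + \tfrac{1}{B_k}\langle \grad f(\mathcal{Y}_k), \Log_{\mathcal{Y}_k}(\mathcal{V}_\alpha)\rangle + \tfrac{\mu}{2}\dist^2(\mathcal{Y}_k,\mathcal{V}_\alpha)\Bigr].
\end{equation*}
The key algebraic step is to argue that the bracket is at most $f^*$. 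Indeed, the weak-strong convexity inequality \eqref{eq:weak-strong-conv} applied at $\mathcal{X}=\mathcal{Y}_k$, after rearrangement, says exactly that the analogous bracket with $1/a(\mathcal{Y}_k)$ in place of $1/B_k$ is $\le f^*$. Once the bracket is controlled by $f^*$, I would plug in the induction hypothesis for $\phi_k(\mathcal{V}_\alpha)$ and use the identity $(1-\alpha_k)(1-\tau_k)+\alpha_k = 1-(1-\alpha_k)\tau_k = 1-\tau_{k+1}$ to obtain the desired bound.

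I expect the delicate point to be justifying the replacement of $1/a(\mathcal{Y}_k)$ by the \emph{larger} quantity $1/B_k$ in the bracket, since this only preserves the inequality if the inner product $\langle \grad f(\mathcal{Y}_k), \Log_{\mathcal{Y}_k}(\mathcal{V}_\alpha)\rangle$ is nonpositive. This sign is not postulated in the hypotheses, but it follows for free from \eqref{eq:weak-strong-conv} itself: rearranging that inequality gives
\begin{equation*}
\tfrac{1}{a(\mathcal{Y}_k)}\langle \grad f(\mathcal{Y}_k), -\Log_{\mathcal{Y}_k}(\mathcal{V}_\alpha)\rangle \;\ge\; f(\mathcal{Y}_k) - f^* + \tfrac{\mu}{2}\dist^2(\mathcal{Y}_k,\mathcal{V}_\alpha) \;\ge\; 0,
\end{equation*}
and $a(\mathcal{Y}_k)>0$ supplies the sign of the inner product. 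With this observation in hand, the substitution is monotone in the correct direction, the bracket is bounded by $f^*$, and the induction closes as described.
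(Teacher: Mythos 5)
Your proof is correct and follows essentially the same route as the paper: induction on $k$, using the hypothesis $\phi_{k+1}(\mathcal{V}_\alpha)\le\bar\phi_{k+1}(\mathcal{V}_\alpha)$, bounding the bracket at $\mathcal{V}_\alpha$ by $f^*$ via the weak-strong convexity inequality \eqref{eq:weak-strong-conv}, and noting that $\sum_k\alpha_k=\infty$ forces $\tau_k\to 0$. In fact you make explicit a point the paper only gestures at with ``(and $a(\mathcal{Y}_k)\ge B_k$)'': the replacement of $1/a(\mathcal{Y}_k)$ by the larger $1/B_k$ is legitimate precisely because \eqref{eq:weak-strong-conv} itself forces $\langle \grad f(\mathcal{Y}_k),\Log_{\mathcal{Y}_k}(\mathcal{V}_\alpha)\rangle\le 0$.
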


\begin{proof}
   We prove the main inequality involving $\phi_k$ in the definition of a weak estimate sequence by induction. For $k=0$, we have $ \phi_0(\mathcal{V}_{\alpha})=(1-\tau_0) f^* + \tau_0 \phi_0(\mathcal{V}_{\alpha})$ because $\tau_0=1$.  Assume that the inequality holds for some $k \geq 0$:
   $$\phi_k(\mathcal{V}_{\alpha})-f^* \leq \tau_k(\phi_0(\mathcal{V}_{\alpha})-f^*).$$ 
   Then, we have
   \begin{align*}
       \phi_{k+1}(\mathcal{V}_{\alpha})-f^* &\leq \bar \phi_{k+1}(\mathcal{V}_{\alpha}) - f^* \\ &\leq (1-\alpha_k)\phi_k(\mathcal{V}_{\alpha}) + \alpha_k f^* - f^* \\ & = (1-\alpha_k) ( \phi_k(\mathcal{V}_{\alpha})-f^* ) \\ & \leq (1-\alpha_k) \tau_k (\phi_0(\mathcal{V}_{\alpha})-f^*) \\ & = \tau_{k+1} (\phi_0(\mathcal{V}_{\alpha})-f^*).
   \end{align*}
   Thus, the inequality holds also for $k+1$ which concludes the induction. The first inequality follows from the construction of $\phi_k$, the second by inequality \ref{eq:weak-strong-conv} (and $a(\mathcal{Y}_k) \geq B_k$) and the third by the induction hypothesis.

Furthermore, we observe that the assumption $\Sigma_{k=0}^{\infty} \alpha_k = \infty$ guarantees that $\lim_{k \rightarrow \infty} \tau_k = 0$, which finishes the proof. 
\end{proof}

\section{Towards an algorithm}
We now use Proposition \ref{prop:weak_estimate_sequence_general} to construct a more specific weak estimate sequence:

\begin{proposition}
\label{prop:weak_estimate_sequence_special}
Consider $\phi_k$, $\alpha_k$, $B_k$ and $\mathcal{Y}_k$ as in Proposition \ref{prop:weak_estimate_sequence_general} and let $\phi_k^*$ be defined as in Proposition \ref{prop:utility_weak_est_seq}. 
    Choose $\phi_0(\mathcal{X})=\phi_0^*+ \frac{\gamma_0}{2} \| \textnormal{Log}_{\mathcal{Y}_0}(\mathcal{X}) \|^2$ (this is possible since Proposition \ref{prop:weak_estimate_sequence_general} is for an arbitrary $\phi_0$). For $k \geq 0$, we define the following terms recursively:
    \begin{itemize}
        \item $\bar \gamma_{k+1}:=(1-\alpha_k) \gamma_k + \alpha_k \mu$
        \item $\mathcal{V}_{k+1} := \textnormal{Exp}_{\mathcal{Y}_k} \left(\frac{(1-\alpha_k) \gamma_k}{\bar \gamma_{k+1}} \textnormal{Log}_{\mathcal{Y}_k}(\mathcal{V}_k) - \frac{\alpha_k}{B_k \bar \gamma_{k+1}} \textnormal{grad}f(\mathcal{Y}_k) \right)$ 
        \item $\phi_{k+1}^* := (1-\alpha_k) \phi_k^* + \alpha_k f(\mathcal{Y}_k) - \frac{\alpha_k^2}{2 B_k^2 \bar \gamma_{k+1}} \| \textnormal{grad}f(\mathcal{Y}_k) \|^2 \\ + \frac{\alpha_k(1-\alpha_k) \gamma_k}{\bar \gamma_{k+1}} \left( \frac{\mu}{2} \textnormal{dist}^2(\mathcal{Y}_k,\mathcal{V}_k) + \frac{1}{B_k} \langle \textnormal{grad}f(\mathcal{Y}_k), \textnormal{Log}_{\mathcal{Y}_k}(\mathcal{V}_k) \rangle \right)$ 
    \end{itemize}
    If $\gamma_{k+1}$ is chosen such that
    \[
    \gamma_{k+1} \| \textnormal{Log}_{\mathcal{Y}_{k+1}}(\mathcal{V}_{\alpha}) - \textnormal{Log}_{\mathcal{Y}_{k+1}}(\mathcal{V}_{k+1}) \|^2 \leq \bar \gamma_{k+1} \| \textnormal{Log}_{\mathcal{Y}_k}(\mathcal{V}_{\alpha}) - \textnormal{Log}_{\mathcal{Y}_k}( \mathcal{V}_{k+1}) \|^2,
    \]
    then the pair of sequences $(\tau_k,\phi_k)$ defined by
    \begin{align*}
        &\phi_k(\mathcal{X}) := \phi_k^* + \frac{\gamma_k}{2} \| \textnormal{Log}_{\mathcal{Y}_k}(\mathcal{X}) - \textnormal{Log}_{\mathcal{Y}_k}(\mathcal{V}_k) \|^2 \\
        &\tau_0=1, \hspace{2mm} \tau_{k+1} :=(1-\alpha_k) \tau_k
    \end{align*}
    is a weak estimate sequence for $f$.
\end{proposition}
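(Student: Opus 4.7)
The plan is to apply Proposition \ref{prop:weak_estimate_sequence_general}, which reduces the task to verifying the pointwise inequality $\phi_k(\mathcal{V}_\alpha) \leq \bar\phi_k(\mathcal{V}_\alpha)$ for every $k \geq 0$, with $\bar\phi_k$ defined by the recursion there. Since $\phi_k$ is presented as $\phi_k^* + \tfrac{\gamma_k}{2}\|\cdot\|^2 \geq \phi_k^*$ with equality attained at $\mathcal{V}_k$, the quantities $\phi_k^*$ and $\mathcal{V}_k$ appearing in the recursion are self-consistently the minimum value and minimizer of $\phi_k$; in particular, the convergence assumption $\sum_{k=0}^{\infty}\alpha_k = \infty$ already guarantees $\tau_k \to 0$ via $\tau_{k+1} = (1-\alpha_k)\tau_k$.

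The main technical step is to recognize $\bar\phi_{k+1}$ as a quadratic in the tangent vector $g := \Log_{\mathcal{Y}_k}(\mathcal{X})$. Setting $w_k := \Log_{\mathcal{Y}_k}(\mathcal{V}_k)$ and using $\dist^2(\mathcal{Y}_k,\mathcal{X})=\|g\|^2$, I would substitute the quadratic form of $\phi_k$ into the definition of $\bar\phi_{k+1}$. The $\|g\|^2$-coefficient collapses to $\bar\gamma_{k+1}/2 = ((1-\alpha_k)\gamma_k + \alpha_k \mu)/2$, and completing the square identifies the unique minimizer in $g$ as
\[
g^\star = \tfrac{(1-\alpha_k)\gamma_k}{\bar\gamma_{k+1}}\,w_k - \tfrac{\alpha_k}{B_k\,\bar\gamma_{k+1}}\,\grad f(\mathcal{Y}_k),
\]
so that $\Exp_{\mathcal{Y}_k}(g^\star) = \mathcal{V}_{k+1}$ as stated. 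A short algebraic computation (expanding $\|v\|^2$ with $v := \tfrac{\alpha_k}{B_k}\grad f(\mathcal{Y}_k) - (1-\alpha_k)\gamma_k w_k$ and combining the surviving $\|w_k\|^2$-term with the residual coefficient $\bar\gamma_{k+1}-(1-\alpha_k)\gamma_k = \alpha_k\mu$) shows that the minimum value equals the stated $\phi_{k+1}^*$. One thus obtains the compact form
\[
\bar\phi_{k+1}(\mathcal{X}) = \phi_{k+1}^* + \tfrac{\bar\gamma_{k+1}}{2}\bigl\|\Log_{\mathcal{Y}_k}(\mathcal{X}) - \Log_{\mathcal{Y}_k}(\mathcal{V}_{k+1})\bigr\|^2.
\]

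Finally, evaluating at $\mathcal{X}=\mathcal{V}_\alpha$ and comparing with $\phi_{k+1}(\mathcal{V}_\alpha) = \phi_{k+1}^* + \tfrac{\gamma_{k+1}}{2}\|\Log_{\mathcal{Y}_{k+1}}(\mathcal{V}_\alpha) - \Log_{\mathcal{Y}_{k+1}}(\mathcal{V}_{k+1})\|^2$, the hypothesis on $\gamma_{k+1}$ is \emph{literally} the inequality $\phi_{k+1}(\mathcal{V}_\alpha) \leq \bar\phi_{k+1}(\mathcal{V}_\alpha)$, so Proposition \ref{prop:weak_estimate_sequence_general} applies. The genuine obstacle is precisely this last comparison: on a nonlinear manifold, the quadratic defining $\phi_{k+1}$ is based at $\mathcal{Y}_{k+1}$ while the one inherited from $\bar\phi_{k+1}$ is based at $\mathcal{Y}_k$, and the $\Log$-map differences $\Log_{\mathcal{Y}_{k+1}}(\mathcal{V}_\alpha)-\Log_{\mathcal{Y}_{k+1}}(\mathcal{V}_{k+1})$ and $\Log_{\mathcal{Y}_k}(\mathcal{V}_\alpha)-\Log_{\mathcal{Y}_k}(\mathcal{V}_{k+1})$ are not equal. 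The hypothesis absorbs this deformation into the ratio $\gamma_{k+1}/\bar\gamma_{k+1}$, and later on one presumably bounds this ratio using the curvature-distortion estimate of Lemma \ref{le:geom_bound}, which is the reason the analysis will eventually require local initialization.
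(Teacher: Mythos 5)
Your proposal is correct and follows essentially the same route as the paper's proof: substitute the quadratic form of $\phi_k$ into the recursion for $\bar\phi_{k+1}$, complete the square in $\Log_{\mathcal{Y}_k}(\mathcal{X})$ to identify $\mathcal{V}_{k+1}$ as the minimizer and $\phi_{k+1}^*$ as the minimum, and then observe that the hypothesis on $\gamma_{k+1}$ is exactly the comparison $\phi_{k+1}(\mathcal{V}_{\alpha}) \leq \bar\phi_{k+1}(\mathcal{V}_{\alpha})$ required by Proposition \ref{prop:weak_estimate_sequence_general}. Your closing remark about the base-point mismatch ($\mathcal{Y}_{k+1}$ versus $\mathcal{Y}_k$) being absorbed by the $\gamma_{k+1}$ condition and later controlled via Lemma \ref{le:geom_bound} matches the paper's subsequent treatment as well.
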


\begin{proof}
    We firstly prove that if $\phi_k(\mathcal{X})= \phi_k^*+\frac{\gamma_k}{2} \| \textnormal{Log}_{\mathcal{Y}_k}(\mathcal{X}) - \textnormal{Log}_{\mathcal{Y}_k}(\mathcal{V}_k) \|^2$, then $ \bar \phi_{k+1}(\mathcal{X})=\phi_{k+1}^* + \frac{\bar \gamma_{k+1}}{2} \| \textnormal{Log}_{\mathcal{Y}_k}(\mathcal{X}) - \textnormal{Log}_{\mathcal{Y}_k}(\mathcal{V}_{k+1}) \|^2$, where $\bar \phi_{k+1}$ is defined recursively from $\phi_k$ as in Proposition \ref{prop:weak_estimate_sequence_general}. 
    
    Indeed, we have
\begin{align*}
    \bar \phi_{k+1}(\mathcal{X})= & (1-\alpha_k) \phi_k(\mathcal{X})+\alpha_k \left(f(\mathcal{Y}_k)+\frac{1}{B_k} \langle \textnormal{grad}f(\mathcal{Y}_k), \textnormal{Log}_{\mathcal{Y}_k}(\mathcal{X}) \rangle + \frac{\mu}{2} \textnormal{dist}^2(\mathcal{Y}_k,\mathcal{X}) \right),
\end{align*}
by its definition in Proposition \ref{prop:weak_estimate_sequence_general}.
We can rewrite the right hand side as
\begin{align*}
& (1-\alpha_k) \phi_k(\mathcal{X})+\alpha_k \left(f(\mathcal{Y}_k)+\frac{1}{B_k} \langle \textnormal{grad}f(\mathcal{Y}_k), \textnormal{Log}_{\mathcal{Y}_k}(\mathcal{X}) \rangle + \frac{\mu}{2} \textnormal{dist}^2(\mathcal{Y}_k,\mathcal{X}) \right) = \\ &  
    (1-\alpha_k) \left(\phi_k^*+\frac{\gamma_k}{2} \| \textnormal{Log}_{\mathcal{Y}_k}(\mathcal{X}) - \textnormal{Log}_{\mathcal{Y}_k}(\mathcal{V}_k) \|^2 \right) \\ & +\alpha_k \left(f(\mathcal{Y}_k)+\frac{1}{B_k} \langle \textnormal{grad}f(\mathcal{Y}_k), \textnormal{Log}_{\mathcal{Y}_k}(\mathcal{X}) \rangle + \frac{\mu}{2} \textnormal{dist}^2(\mathcal{Y}_k,\mathcal{X}) \right),
\end{align*}
where we use the induction hypothesis for $\phi_k$.
By rearranging the terms and completing the square, we can write
\begin{align*}
    &(1-\alpha_k) \left(\phi_k^*+\frac{\gamma_k}{2} \| \textnormal{Log}_{\mathcal{Y}_k}(\mathcal{X}) - \textnormal{Log}_{\mathcal{Y}_k}(\mathcal{V}_k) \|^2 \right) \\ & +\alpha_k \left(f(\mathcal{Y}_k)+\frac{1}{B_k} \langle \textnormal{grad}f(\mathcal{Y}_k), \textnormal{Log}_{\mathcal{Y}_k}(\mathcal{X}) \rangle + \frac{\mu}{2} \textnormal{dist}^2(\mathcal{Y}_k,\mathcal{X}) \right) \\ &= \frac{\bar \gamma_{k+1}}{2} \| \textnormal{Log}_{\mathcal{Y}_k}(\mathcal{X}) \|^2 + \langle \frac{\alpha_k}{B_k} \textnormal{grad}f(\mathcal{Y}_k) - (1-\alpha_k) \gamma_k \textnormal{Log}_{\mathcal{Y}_k}(\mathcal{V}_k) , \textnormal{Log}_{\mathcal{Y}_k}(\mathcal{X}) \rangle  \\ & + (1-\alpha_k) \left(\phi_k^* + \frac{\gamma_k}{2} \| \textnormal{Log}_{\mathcal{Y}_k}(\mathcal{V}_k) \|^2 \right) + \alpha_k f(\mathcal{Y}_k) \\  = & 
        \frac{\bar \gamma_{k+1}}{2} \left \| \textnormal{Log}_{\mathcal{Y}_k}(\mathcal{X}) - \left( \frac{(1-\alpha_k) \gamma_k}{\bar \gamma_{k+1}} \textnormal{Log}_{\mathcal{Y}_k}(\mathcal{V}_k) - \frac{\alpha_k}{B_k \bar \gamma_{k+1}} \textnormal{grad}f(\mathcal{Y}_k) \right) \right \|^2 \\ & - \frac{\bar \gamma_{k+1}}{2} \left \| \frac{(1-\alpha_k) \gamma_k}{\bar \gamma_{k+1}} \textnormal{Log}_{\mathcal{Y}_k}(\mathcal{V}_k) - \frac{\alpha_k}{B_k \bar \gamma_{k+1}} \textnormal{grad}f(\mathcal{Y}_k) \right \|^2 \\ & + (1-\alpha_k) \left(\phi_k^* + \frac{\gamma_k}{2} \| \textnormal{Log}_{\mathcal{Y}_k}(\mathcal{V}_k) \|^2 \right) + \alpha_k f(\mathcal{Y}_k).
\end{align*}
Plugging in the definition of $\mathcal{V}_{k+1}$ and splitting the norm in the second summand, we can write the last expression as
\begin{align*}
    & \frac{\bar \gamma_{k+1}}{2} \left \| \textnormal{Log}_{\mathcal{Y}_k}(\mathcal{X}) - \textnormal{Log}_{\mathcal{Y}_k}(\mathcal{V}_{k+1}) \right \|^2 - \frac{\alpha_k^2}{2 B_k^2 \bar \gamma_{k+1}} \| \textnormal{grad}f(\mathcal{Y}_k) \|^2 \\ & + \frac{\alpha_k(1-\alpha_k) \gamma_k}{\bar \gamma_{k+1}} \left( \frac{\mu}{2} \| \textnormal{Log}_{\mathcal{Y}_k}(\mathcal{V}_k) \|^2 + \frac{1}{B_k} \langle \textnormal{Log}_{\mathcal{Y}_k}(\mathcal{V}_k) , \textnormal{grad}f(\mathcal{Y}_k) \rangle \right) \\ & + (1-\alpha_k) \phi_k^* + \alpha_k f(\mathcal{Y}_k).
\end{align*}
Finally, we can use the definition of $\phi_{k+1}^*$ from $\phi_k^*$ and rewrite again in the desired form:
\begin{align*}
\phi_{k+1}^* + \frac{\bar \gamma_{k+1}}{2} \left \| \textnormal{Log}_{\mathcal{Y}_k}(\mathcal{X}) - \textnormal{Log}_{\mathcal{Y}_k}(\mathcal{V}_{k+1}) \right \|^2.
\end{align*}
This proves our first claim.

    Using the previous computation and the definition of $\gamma_{k+1}$, we immediately get 
    \begin{equation*}
        \phi_{k+1}(\mathcal{V}_{\alpha}) \leq \bar \phi_{k+1}(\mathcal{V}_{\alpha}).
    \end{equation*}
Thus, we have shown that the sequence $\phi_k$ defined as 
\begin{equation*}
    \phi_k(\mathcal{X}) := \phi_k^* + \frac{\gamma_k}{2} \| \textnormal{Log}_{\mathcal{Y}_k}(\mathcal{X}) - \textnormal{Log}_{\mathcal{Y}_k}(\mathcal{V}_k) \|^2
\end{equation*}
satisfies all the assumptions of Proposition \ref{prop:weak_estimate_sequence_general}. 
We therefore conclude that $(\tau_k,\phi_k)$ is a weak estimate sequence.
\end{proof}

\vspace{4mm}

Now we have a concrete definition of $\mathcal{V}_{k+1}$ from $\mathcal{Y}_k$ and $\mathcal{V}_k$. It remains to define $\mathcal{Y}_k$ through $\mathcal{X}_k$ and $\mathcal{V}_k$ and $\mathcal{X}_{k+1}$ through $\mathcal{Y}_k$. We do this with criterion to guarantee $f(\mathcal{X}_k) \leq \phi_k^*$. In order to guarantee that, let us assume that $f(\mathcal{X}_k) \leq \phi_k^*$ and see what happens with $\phi_{k+1}^*$ (towards having an induction step). Using the definition of $\phi_{k+1}^*$ in Proposition \ref{prop:weak_estimate_sequence_special} and $f(\mathcal{X}_k) \leq \phi_k^*$, we have:
\begin{align*}
\label{ineq:general}
    \phi_{k+1}^* & \geq (1-\alpha_k) f(\mathcal{X}_k) + \alpha_k f(\mathcal{Y}_k)-\frac{\alpha_k^2}{2 B_k^2 \bar \gamma_{k+1}} \| \textnormal{grad}f(\mathcal{Y}_k) \|^2 \\ & + \frac{\alpha_k(1-\alpha_k) \gamma_k}{B_k \bar \gamma_{k+1}} \langle \textnormal{grad}f(\mathcal{Y}_k), \textnormal{Log}_{\mathcal{Y}_k}(\mathcal{V}_k) \rangle.
\end{align*}
The usual way to proceed from here is to assume that $f$ is geodesically convex and linearize it from below (see \cite{zhang2018towards}, Lemma 5). Since the Rayleigh quotient on Grassmann is only-locally convex, we employ a different strategy using a geodesic search as in \cite{alimisis2021momentum}, inspired by \cite{nesterov2020primal}. Namely, if we find a way to choose $\mathcal{Y}_k$ from $\mathcal{X}_k$ and $\mathcal{V}_k$ such that
\begin{equation}
\label{ineq:line_search}
    f(\mathcal{X}_k)+ \frac{\alpha_k\gamma_k}{B_k \bar \gamma_{k+1}} \langle \textnormal{grad}f(\mathcal{Y}_k), \textnormal{Log}_{\mathcal{Y}_k}(\mathcal{V}_k) \rangle \geq f(\mathcal{Y}_k)
\end{equation}
then the previous inequality can be reduced to
\begin{align}
\label{ineq:grad_step}
    \phi_{k+1}^* \geq f(\mathcal{Y}_k) - \frac{\alpha_k^2}{2 B_k^2 \bar \gamma_{k+1}} \| \textnormal{grad}f(\mathcal{Y}_k) \|^2.
\end{align}

Inequality (\ref{ineq:line_search}) is satisfied if we choose $\mathcal{Y}_k$ through an exact search in the geodesic connecting $\mathcal{V}_k$ and $\mathcal{X}_k$:
\begin{lemma}
\label{le:geod_search}
    Let
    \begin{equation*}
        \mathcal{Y}_k:=\textnormal{Exp}_{\mathcal{V}_k}(\beta_k \textnormal{Log}_{\mathcal{V}_k}(\mathcal{X}_k))
    \end{equation*}
    where
    \begin{equation*}
        \beta_k := \textnormal{argmin}_{\beta \in [0,1]} (\textnormal{Exp}_{\mathcal{V}_k}(\beta \textnormal{Log}_{\mathcal{V}_k}(\mathcal{X}_k))).
    \end{equation*}
    Then we have
    \begin{equation*}
        f(\mathcal{Y}_k) \leq f(\mathcal{X}_k) \hspace{3mm} \text{and} \hspace{3mm} \langle\textnormal{grad}f(\mathcal{Y}_k), \textnormal{Log}_{\mathcal{Y}_k}(\mathcal{V}_k) \rangle \geq 0.
    \end{equation*}
\end{lemma}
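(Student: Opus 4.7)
The plan is to reduce both claims to a one-dimensional analysis of $f$ restricted to the geodesic segment connecting $\mathcal{V}_k$ and $\mathcal{X}_k$. Define
\[
\gamma(\beta) := \textnormal{Exp}_{\mathcal{V}_k}(\beta \, \textnormal{Log}_{\mathcal{V}_k}(\mathcal{X}_k)), \qquad g(\beta) := f(\gamma(\beta)), \quad \beta \in [0,1].
\]
Then $\gamma(0) = \mathcal{V}_k$, $\gamma(1) = \mathcal{X}_k$, $\gamma(\beta_k) = \mathcal{Y}_k$, and by definition $\beta_k$ is a minimizer of $g$ on $[0,1]$.

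The first claim, $f(\mathcal{Y}_k) \leq f(\mathcal{X}_k)$, is immediate: since $\beta_k$ minimizes $g$ over $[0,1]$ and $\beta = 1$ lies in $[0,1]$, we have $g(\beta_k) \leq g(1)$, which is precisely the desired inequality.

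For the second claim, I would compute $g'(\beta_k)$ via the chain rule on a Riemannian manifold:
\[
g'(\beta) = \langle \textnormal{grad}\, f(\gamma(\beta)),\, \dot\gamma(\beta) \rangle.
\]
The main geometric step is to identify $\dot\gamma(\beta_k)$ with $\textnormal{Log}_{\mathcal{Y}_k}(\mathcal{V}_k)$ up to a known factor. Since $s \mapsto \gamma(\beta_k - s\beta_k)$ is a geodesic starting at $\mathcal{Y}_k$ at $s=0$ and reaching $\mathcal{V}_k$ at $s=1$, its initial velocity is the Riemannian logarithm:
\[
\textnormal{Log}_{\mathcal{Y}_k}(\mathcal{V}_k) = -\beta_k \, \dot\gamma(\beta_k).
\]
(One has to assume here that all quantities are inside the injectivity domain of $\textnormal{Exp}_{\mathcal{V}_k}$ and $\textnormal{Exp}_{\mathcal{Y}_k}$, which will be guaranteed by the local setting of the algorithm.) Therefore
\[
\langle \textnormal{grad}\, f(\mathcal{Y}_k), \textnormal{Log}_{\mathcal{Y}_k}(\mathcal{V}_k) \rangle = -\beta_k \, g'(\beta_k).
\]

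It remains to argue $-\beta_k g'(\beta_k) \geq 0$ by splitting into cases based on where the minimizer $\beta_k$ lies in $[0,1]$. If $\beta_k \in (0,1)$, then the standard first-order optimality condition gives $g'(\beta_k) = 0$, and the inner product vanishes. If $\beta_k = 0$, the prefactor $-\beta_k$ is zero, so the inner product is again zero (and in this case $\mathcal{Y}_k = \mathcal{V}_k$). If $\beta_k = 1$, optimality at the right endpoint forces $g'(1) \leq 0$, so $-\beta_k g'(1) = -g'(1) \geq 0$. In all three cases the desired inequality holds. I do not anticipate any real obstacle; the only point that requires mild care is the identification $\textnormal{Log}_{\mathcal{Y}_k}(\mathcal{V}_k) = -\beta_k \dot\gamma(\beta_k)$, which is the standard "reversing a geodesic" identity but needs the injectivity assumption to be stated cleanly.
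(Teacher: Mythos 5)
Your proof is correct and is essentially the same argument the paper relies on: the paper does not reprove this lemma but cites Appendix B of \cite{alimisis2021momentum}, which proceeds by the same reduction to the one-variable function $g(\beta)=f(\Exp_{\mathcal{V}_k}(\beta\,\Log_{\mathcal{V}_k}(\mathcal{X}_k)))$, the endpoint comparison $g(\beta_k)\le g(1)$, and first-order optimality of $\beta_k$ split into the interior and boundary cases. The one point needing care, the identity $\Log_{\mathcal{Y}_k}(\mathcal{V}_k)=-\beta_k\,\dot\gamma(\beta_k)$ valid inside the injectivity domain, you flag correctly; it is guaranteed in the algorithm's local regime, where Theorem \ref{thm_local} keeps $\dist(\mathcal{X}_k,\mathcal{V}_k)\le \tfrac12<\tfrac{\pi}{2}$ so the geodesic between $\mathcal{V}_k$ and $\mathcal{X}_k$ is unique and its reversal realizes the logarithm.
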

\begin{proof}
    See Appendix B in \cite{alimisis2021momentum}.
\end{proof}

\vspace{2mm}

Thus, if we choose $\mathcal{Y}_k$ in the manner of Lemma \ref{le:geod_search}, the initial inequality for $\phi_{k+1}^*$ implies inequality \ref{ineq:grad_step}. Inequality \ref{ineq:grad_step} is similar to the function value reduction that is obtained via a gradient step:

If we choose 
\begin{equation}
\label{eq:grad_step}
    \mathcal{X}_{k+1}= \Exp_{\mathcal{Y}_k} \left(-\frac{1}{\gamma} \textnormal{grad}f(\mathcal{Y}_k) \right),
\end{equation}
then \ref{eq:func_val_decrease} implies that if we also choose $\alpha_k$ such that
\begin{equation*}
    \frac{\alpha_k^2}{2 B_k^2 \bar \gamma_{k+1}} = \frac{1}{2 \gamma},
\end{equation*}
then 
\begin{equation}
\label{eq:func_val_desc}
   f(\mathcal{X}_{k+1}) \leq f(\mathcal{Y}_k) - \frac{\alpha_k^2}{2 B_k^2 \bar \gamma_{k+1}} \| \textnormal{grad}f(\mathcal{Y}_k) \|^2 
\end{equation}
and consequently
\begin{equation*}
   f(\mathcal{X}_{k+1}) \leq \phi_{k+1}^*.
\end{equation*}

We have also the freedom to make the gradient step from $\mathcal{Y}_k$ following the QR-retraction and choosing the step-size via an exact line-search:

\begin{equation}
\label{eq:grad_step_retr}
    \mathcal{X}_{k+1}=\textnormal{Retr}_{\mathcal{Y}_k} \left(-\eta_{\text{optimal}} \cdot \textnormal{grad}f(\mathcal{Y}_k) \right).
\end{equation}

Similarly, Proposition \ref{prop:func_val_decrease_retr} implies that if we choose $\alpha_k$ such that
\begin{equation*}
    \frac{\alpha_k^2}{2 B_k^2 \bar \gamma_{k+1}} = \frac{2}{5 \gamma} =: \frac{1}{2 \tilde \gamma},
\end{equation*}
then again 
\begin{equation*}
   f(\mathcal{X}_{k+1}) \leq \phi_{k+1}^*
\end{equation*}
is guaranteed. Here $\tilde \gamma$ is defined as $\frac{5}{4} \gamma$, where $\gamma$ is the smoothness constant.

Choosing $\phi_0^*=f(\mathcal{X}_0)$, we can now prove by induction that the following algorithm produces iterates $\mathcal{X}_k$, such that $f(\mathcal{X}_k) \leq \phi_k^*$, where $\phi_k^*$ is defined recursively as in Proposition \ref{prop:weak_estimate_sequence_special}. This analysis yields us naturally to an algorithm, which can be proved to have accelerated convergence guarantees. We choose to write the algorithm using equation \ref{eq:grad_step} to perform the gradient step for obtaining $\mathcal{X}_{k+1}$ from $\mathcal{Y}_k$, but we could also use equation \ref{eq:grad_step_retr}. The only thing that changes is the constant $\gamma$ to $\Tilde{\gamma}$.

Significant effort needs to be spent in proving that all operations in Algorithm \ref{alg:first_version} are well-defined. For that to happen, we need to insure that there is a unique geodesic connecting $\mathcal{V}_k$ and $\mathcal{X}_k$, that $\| \textnormal{grad}f(\mathcal{Y}_k) \|_2 < \frac{\pi}{2}$ and that $\left\|\frac{(1-\alpha_k) \gamma_k}{\bar \gamma_{k+1}} \textnormal{Log}_{\mathcal{Y}_k}(\mathcal{V}_k) - \frac{2 \alpha_k}{\bar \gamma_{k+1}} \textnormal{grad}f(\mathcal{Y}_k) \right\|_2 < \frac{\pi}{2}$ (i.e. these tangent vectors are inside the injecitivity domain, recall equation (\ref{eq:inj_exp})). To guarantee these bounds, a careful selection of hyperparameters is crucial. Algorithm \ref{alg:first_version} is written in a form, in which it is not clear whether some steps are doable, for instance it is not clear whether one can find a $\gamma_{k+1}$ from $\bar \gamma_{k+1}$ such that the requirements of step 9 are satisfied. For the moment we assume that all these can be done and we show in the next section that a careful selection of $\gamma_0$ indeed yields all the requirements of Algorithm \ref{alg:first_version}.

\begin{algorithm}[H]

\small
\caption{Accelerated Gradient Descent for the Block Rayleigh Quotient}
\label{alg:first_version}
\begin{algorithmic}[1]
      \STATE \text{Initialize at} $\mathcal{X}_0=\mathcal{V}_0 \in \Gr(n,p)$ \text{and choose} $\gamma_0$, such that $\frac{\mu}{2} \leq \gamma_0 \leq \gamma$
      \vspace{2mm}
      \FOR {$k \geq 0$}
      \vspace{2mm}
      \STATE $\beta_k =  \underset{\beta \in [0,1]}{\mathrm{argmin}}
      \left \lbrace f(\textnormal{Exp}_{\mathcal{V}_k} (\beta \textnormal{Log}_{\mathcal{V}_k}(\mathcal{X}_k))) \right \rbrace$
      \vspace{2mm}
      \STATE $\mathcal{Y}_k=\textnormal{Exp}_{\mathcal{V}_k}(\beta_k \textnormal{Log}_{\mathcal{V}_k}(\mathcal{X}_k))$
      \vspace{2mm}
      \STATE $4 \alpha_k^2=\frac{(1-\alpha_k) \gamma_k+\alpha_k \mu}{\gamma}$
      \vspace{2mm}
      \STATE $\mathcal{X}_{k+1}= \Exp_{\mathcal{Y}_k} \left(-\frac{1}{\gamma} \textnormal{grad}f(\mathcal{Y}_k) \right)$
      \vspace{2mm}
      \STATE $\bar \gamma_{k+1} = (1-\alpha_k) \gamma_k+ \alpha_k \mu$
      \vspace{2mm}
    
      \STATE $\mathcal{V}_{k+1}= \textnormal{Exp}_{\mathcal{Y}_k} \left(\frac{(1-\alpha_k) \gamma_k}{\bar \gamma_{k+1}} \textnormal{Log}_{\mathcal{Y}_k}(\mathcal{V}_k) - \frac{2 \alpha_k}{\bar \gamma_{k+1}} \textnormal{grad}f(\mathcal{Y}_k) \right)$ 
      \vspace{2mm}
      \STATE $\gamma_{k+1} \| \textnormal{Log}_{\mathcal{Y}_{k+1}}(\mathcal{V}_{\alpha}) - \textnormal{Log}_{\mathcal{Y}_{k+1}}(\mathcal{V}_{k+1}) \|^2 \leq \bar \gamma_{k+1} \| \textnormal{Log}_{\mathcal{Y}_k}(\mathcal{V}_{\alpha}) - \textnormal{Log}_{\mathcal{Y}_k}(\mathcal{V}_{k+1}) \|^2$, such that $\bar \gamma_{k+1} \geq \gamma_{k+1} \geq \mu/2$.
      \vspace{2mm}
      \ENDFOR
\end{algorithmic}      
 \end{algorithm}

\paragraph{Remark} Notice that Algorithm \ref{alg:first_version} is invariant under shifts of the matrix $A$ with multiples of the identity matrix. Indeed, the only steps that are affected by such a shift are steps 3,6 and 8, which feature the function $f$ or its gradient. The shift $A+\alpha I$ yields a function value which is just shifted by a constant, thus step 3 remains unchanged. Also, the Riemannian gradient remains exactly the same as the orthogonal projection neutralizes the extra term obtained by the shift. Thus, steps 6 and 8 also remain unchanged. Notice that the parameters $\gamma$ and $\mu$ remain unchanged as well.

 \begin{theorem}
 \label{thm_local}
     If $\mathcal{X}_0$ satisfies
     \begin{equation*}
         \textnormal{dist}(\mathcal{X}_0,\mathcal{V}_{\alpha}) \leq \frac{1}{8} \sqrt{c_Q} \left(\frac{\delta}{\gamma} \right)^{3/4},
     \end{equation*} 
     and step 9 in Algorithm~\ref{alg:first_version} can be satisfied, i.e. there is $\gamma_{k+1}$ satisfying the required bounds,
     then the following holds:
     \begin{itemize}
         \item[(i)] $f(\mathcal{X}_k), f(\mathcal{Y}_k) \leq f(\mathcal{X}_0)$, for all $k \geq 0$
         \item[(ii)] a$(\mathcal{Y}_k) \geq \frac{1}{2}$
         \item[(iii)] The operations in steps 3, 4, 6,and 8 in Algorithm~\ref{alg:first_version}  are well-defined in the sense that the related tangent vectors are inside the injectivity domain of $\Gr(n,p)$
         \item[(iv)] $f(\mathcal{X}_k) \leq \phi_k^*$, for all $k \geq 0$, where $\phi_k^*$ is defined as
     \begin{align*}
         &\phi_0^*=f(\mathcal{X}_0) \\
         &\phi_{k+1}^* = (1-\alpha_k) \phi_k^* + \alpha_k f(\mathcal{Y}_k) - \frac{\alpha_k^2}{8 \bar \gamma_{k+1}} \| \textnormal{grad}f(\mathcal{Y}_k) \|^2 \\ & + \frac{\alpha_k(1-\alpha_k) \gamma_k}{\bar \gamma_{k+1}} \left( \frac{\mu}{2} \textnormal{dist}^2(\mathcal{Y}_k,\mathcal{V}_k) + 2 \langle \textnormal{grad}f(\mathcal{Y}_k), \textnormal{Log}_{\mathcal{Y}_k}(\mathcal{V}_k) \rangle \right).
     \end{align*}
     \end{itemize}

 \end{theorem}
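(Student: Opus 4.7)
The plan is a simultaneous induction on $k$ establishing all four claims, the geometric picture being that the three interleaved sequences $(\mathcal{X}_k), (\mathcal{Y}_k), (\mathcal{V}_k)$ stay inside a ball of radius $O((\delta/\gamma)^{1/4})$ around $\mathcal{V}_\alpha$. This is why the exponent $3/4$ appears in the initialization hypothesis: converting an energy bound into a distance bound via Proposition \ref{prop:quadratic_growth} incurs a $\sqrt{\gamma/\delta}$-amplification, so an initial radius $(\delta/\gamma)^{3/4}$ leaves a margin of $(\delta/\gamma)^{1/4}$ everywhere, small enough to keep every tangent vector in Algorithm \ref{alg:first_version} inside the injectivity domain and to make $\theta_p/\tan\theta_p \geq 1/2$. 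The base case $k=0$ is immediate: $\mathcal{X}_0=\mathcal{V}_0$ forces $\mathcal{Y}_0=\mathcal{X}_0$, so (i) and (iv) hold with $\phi_0^*=f(\mathcal{X}_0)$, the hypothesis places $\theta_p(\mathcal{X}_0,\mathcal{V}_\alpha)$ well below $\pi/3$ giving (ii), and (iii) is trivial from the same small-angle considerations.

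For the inductive step, assume the claims for all indices $\leq k$. From $f(\mathcal{X}_k)\leq f(\mathcal{X}_0)$ together with Proposition \ref{prop:quadratic_growth} and the quadratic upper bound at $\mathcal{V}_\alpha$ (where the gradient vanishes),
\[
\dist^2(\mathcal{X}_k,\mathcal{V}_\alpha) \;\leq\; \frac{f(\mathcal{X}_0)-f^*}{c_Q\delta} \;\leq\; \frac{\gamma}{2c_Q\delta}\,\dist^2(\mathcal{X}_0,\mathcal{V}_\alpha),
\]
and the same bound holds for $\mathcal{Y}_k$. To control $\mathcal{V}_k$, I would combine the hypothesis $f(\mathcal{X}_k)\leq\phi_k^*$ with the explicit form of $\phi_k$ from Proposition \ref{prop:weak_estimate_sequence_special}, the weak-estimate bound $\phi_k(\mathcal{V}_\alpha)\leq\phi_0(\mathcal{V}_\alpha)$ from Proposition \ref{prop:utility_weak_est_seq} (since $\tau_k\leq 1$), the lower bound $\gamma_k\geq\mu/2$, and Lemma \ref{prop:tangent_space}; the result is an $O(\sqrt{\gamma/\delta})\,\dist(\mathcal{X}_0,\mathcal{V}_\alpha)$ bound on $\dist(\mathcal{V}_k,\mathcal{V}_\alpha)$ as well. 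Substituting the initialization hypothesis, all three radii are at most a small multiple of $(\delta/\gamma)^{1/4}$.

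These radius bounds make (ii) at index $k$ automatic, since $\theta_p(\mathcal{Y}_k,\mathcal{V}_\alpha)$ is then small enough that $\tan\theta_p\leq 2\theta_p$, and they make (iii) automatic, since each tangent vector in steps 3, 4, 6, 8 has spectral norm dominated by a sum of these radii together with $\|\grad f(\mathcal{Y}_k)\|\leq \gamma\,\dist(\mathcal{Y}_k,\mathcal{V}_\alpha)$ from smoothness. Claim (i) at $k+1$ then chains as $f(\mathcal{X}_{k+1})\leq f(\mathcal{Y}_k)\leq f(\mathcal{X}_k)\leq f(\mathcal{X}_0)$, using the exponential-gradient descent inequality \eqref{eq:func_val_decrease} for step 6 and Lemma \ref{le:geod_search} for the inequality $f(\mathcal{Y}_k)\leq f(\mathcal{X}_k)$.

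The algebraic heart is (iv) at $k+1$. Taking $B_k=1/2$ (legitimated by (ii), which is precisely why the constant $2$ appears in step 8), substitute $f(\mathcal{X}_k)\leq\phi_k^*$ into the recursion for $\phi_{k+1}^*$ from Proposition \ref{prop:weak_estimate_sequence_special}, drop the nonnegative quadratic term in $\dist(\mathcal{Y}_k,\mathcal{V}_k)$ and the nonnegative inner-product term (the latter via Lemma \ref{le:geod_search}) to reduce to \eqref{ineq:grad_step}, and apply the descent \eqref{eq:func_val_decrease} of step 6. The matching condition $\alpha_k^2/(2B_k^2\bar\gamma_{k+1})=1/(2\gamma)$ collapses to $4\alpha_k^2 = \bar\gamma_{k+1}/\gamma$, which is exactly step 5, closing the induction. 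The main obstacle is the interlocking bookkeeping of the three radii: one must feed the induction hypothesis (iv) forward to bound $\dist(\mathcal{V}_k,\mathcal{V}_\alpha)$ before one can justify (ii) and (iii) at index $k$, and the exponent $3/4$ in the initialization is precisely the pivot that keeps all three bounds compatible through the $\sqrt{\gamma/\delta}$-amplification.
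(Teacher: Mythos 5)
Your overall architecture is the same as the paper's: a simultaneous induction on all four claims, with the radii of $\mathcal{X}_k,\mathcal{Y}_k$ controlled through (i) plus quadratic growth and smoothness, the radius of $\mathcal{V}_k$ controlled through (iv) plus the explicit quadratic form of $\phi_k$, $\gamma_k\ge\mu/2$ and Lemma \ref{prop:tangent_space}, claim (ii) from the resulting small principal angles, and claim (iv) closed via $B_k=1/2$, the geodesic-search Lemma \ref{le:geod_search}, the descent bound \eqref{eq:func_val_decrease} and the matching condition $4\alpha_k^2=\bar\gamma_{k+1}/\gamma$ of step 5. All of that matches the paper.

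There is, however, one genuine gap: your claim that (iii) is ``automatic, since each tangent vector in steps 3, 4, 6, 8 has spectral norm dominated by a sum of these radii together with $\|\grad f(\mathcal{Y}_k)\|\le\gamma\,\dist(\mathcal{Y}_k,\mathcal{V}_\alpha)$'' fails for step 8. The tangent vector there contains the term $\frac{2\alpha_k}{\bar\gamma_{k+1}}\grad f(\mathcal{Y}_k)$, and by step 5 the coefficient equals $\frac{1}{2\alpha_k\gamma}$ with $\alpha_k$ of order $\sqrt{\mu/\gamma}$; multiplying by the smoothness bound $\gamma\,\dist(\mathcal{Y}_k,\mathcal{V}_\alpha)=\bigO\bigl(\gamma(\delta/\gamma)^{1/4}\bigr)$ only gives a quantity of order $(\gamma/\delta)^{1/4}$, which is unbounded as $\delta\to0$ and certainly not below $\pi/2$. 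The paper handles this term differently: it invokes the gradient-step decrease \eqref{eq:func_val_desc}, $\frac{2\alpha_k^2}{\bar\gamma_{k+1}}\|\grad f(\mathcal{Y}_k)\|^2\le f(\mathcal{Y}_k)-f(\mathcal{X}_{k+1})\le f(\mathcal{X}_0)-f^*$, multiplies by $2/\bar\gamma_{k+1}$, uses $\bar\gamma_{k+1}\ge\mu/2=c_Q\delta$ together with $f(\mathcal{X}_0)-f^*\le\frac{\gamma}{2}\dist^2(\mathcal{X}_0,\mathcal{V}_\alpha)$, and only then plugs in the $\bigO((\delta/\gamma)^{3/4})$ initialization to conclude $\bigl\|\frac{2\alpha_k}{\bar\gamma_{k+1}}\grad f(\mathcal{Y}_k)\bigr\|\le\frac12$. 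So the $3/4$ exponent is not only the pivot for the quadratic-growth amplification, as you say; it is also exactly what makes the momentum term in step 8 stay inside the injectivity domain, and without this extra argument (or an equivalent one) your induction step for (iii) does not close.
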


\begin{proof}
We proceed to the proof of all  points together by induction. 

For $k=0$, the first holds trivially since $\mathcal{X}_0=\mathcal{Y}_0$.

The second  point holds, because
\begin{equation*}
    a(\mathcal{Y}_0) \geq \cos(\theta_{max}(\mathcal{Y}_0,\mathcal{V}_{\alpha})) \geq \cos(\textnormal{dist}(\mathcal{Y}_0,\mathcal{V}_{\alpha})) \geq \cos\left(1 \right) > \frac{1}{2}.
\end{equation*}
Here $\theta_{max}$ is used to denote the biggest principal angles between subspaces. This inequality implies that $B_0$ can be chosen to be $\frac{1}{2}$.

The third holds since $\mathcal{X}_0 = \mathcal{V}_0$ (steps 3-4 are well-defined) and by $\gamma$-smoothness of $f$, we have
\begin{equation*}
    \| \textnormal{grad}f(\mathcal{Y}_0) \| \leq \gamma \textnormal{dist}(\mathcal{Y}_0, \mathcal{V}_{\alpha}) = \gamma \textnormal{dist}(\mathcal{X}_0, \mathcal{V}_{\alpha}) \leq \frac{\gamma}{8} \left(\frac{\delta}{\gamma} \right)^{3/4} \leq \frac{\gamma}{8}.
\end{equation*}
This implies that the biggest singular value of $-\frac{1}{\gamma} \textnormal{grad}f(\mathcal{Y}_0)$ is less than $\frac{\pi}{2}$, thus $- \frac{1}{\gamma} \textnormal{grad}f(\mathcal{Y}_0)$ is inside the injectivity domain and step 6 is well-defined for $k=0$. For step 8, we have that $\Log_{\mathcal{Y}_0}(\mathcal{V}_0)=0$, thus we need to bound
$
    \left\| -\frac{2 \alpha_0}{\bar \gamma_1} \textnormal{grad}f(\mathcal{Y}_0) \right \|_2
$. For that, we use inequality (\ref{eq:func_val_desc}), which can be rewritten (with $B_0=\frac{1}{2}$) as
\begin{equation*}
   \frac{2\alpha_{0}^2}{\bar \gamma_1} \| \textnormal{grad}f(\mathcal{Y}_{0}) \|^2 \leq f(\mathcal{Y}_{0})-f(\mathcal{X}_{1}) \leq f(\mathcal{X}_0)-f^*.
\end{equation*}
By multiplying both sides of the previous inequality with $\frac{2}{\bar \gamma_1}$ and $\gamma$-smoothness of $f$, we get
\begin{equation*}
    \frac{4\alpha_{0}^2}{\bar \gamma_1^2} \| \textnormal{grad}f(\mathcal{Y}_{0}) \|^2 \leq \frac{\gamma}{\bar \gamma_1} \textnormal{dist}^2(\mathcal{X}_0,\mathcal{V}_{\alpha}) \leq 2 \frac{\gamma}{\mu} \frac{1}{64} c_Q \left(\frac{\delta}{\gamma} \right)^\frac{3}{2} = \frac{1}{64} \frac{\gamma}{\delta} \left(\frac{\delta}{\gamma} \right)^\frac{3}{2} \leq 1.
\end{equation*}
The second inequality follows from $\bar \gamma_1 \geq \gamma_0 \geq \frac{\mu}{2}$.

This bound implies that $-\frac{2 \alpha_0}{\bar \gamma_1} \textnormal{grad}f(\mathcal{Y}_0)$ is inside the injectivity domain and step 8 is well-defined for $k=0$.

The fourth  point holds trivially since $\phi_0^*$ is defined as $f(\mathcal{X}_0)$. 

Now, we assume that all the points hold for all iterations up to iteration $k$ and wish to prove that they still hold for $k+1$.

By the construction of the algorithm we have
\begin{equation*}
    f(\mathcal{Y}_{k+1}) \leq f(\mathcal{Y}_k).
\end{equation*}
This is because $f(\mathcal{Y}_{k+1}) \leq f(\mathcal{X}_{k+1})$ (due to the geodesic search, step 3-4) and $f(\mathcal{X}_{k+1}) \leq f(\mathcal{Y}_k)$ (due to the gradient step, step 6). Thus, we can conclude that $f(\mathcal{Y}_{k+1}) \leq f(\mathcal{Y}_k) \leq f(\mathcal{Y}_0)=f(\mathcal{X}_0)$ by the induction hypothesis. The same inequalities imply that $f(\mathcal{X}_{k+1}) \leq f(\mathcal{X}_k)$ and by the induction hypothesis we have $f(\mathcal{X}_{k+1}) \leq f(\mathcal{X}_0)$. Thus, the first point is correct at iteration $k+1$.

We can use the result of the first point to bound the distance of the iterates $\mathcal{Y}_{k+1},\mathcal{X}_{k+1}$ from the optimum $\mathcal{V}_{\alpha}$, using the quadratic growth condition (Proposition \ref{prop:quadratic_growth}):
\begin{align*}
     \textnormal{dist}^2(\mathcal{Y}_{k+1},\mathcal{V}_{\alpha}) &\leq \frac{1}{c_Q \delta} (f(\mathcal{Y}_{k+1})-f^*) 
     \leq \frac{1}{c_Q \delta} (f(\mathcal{X}_0)-f^*)
     \\ & \leq \frac{\gamma}{2 c_Q \delta} \textnormal{dist}^2(\mathcal{X}_0, \mathcal{V}_{\alpha}) \leq \frac{1}{128} \left(\frac{\delta}{\gamma} \right)^{1/2}.
\end{align*}
The same bound holds also for $\textnormal{dist}(\mathcal{X}_{k+1},\mathcal{V}_{\alpha})$. Thus, we have
\begin{equation*}
    \textnormal{dist}(\mathcal{X}_{k+1},\mathcal{V}_{\alpha}), \textnormal{dist}(\mathcal{Y}_{k+1},\mathcal{V}_{\alpha}) \leq \frac{1}{8\sqrt{2}} \left(\frac{\delta}{\gamma} \right)^{1/4}.
\end{equation*}

This lower bound implies that the quantity $a(\mathcal{Y}_{k+1})$ can be bounded as
\begin{equation*}
    a(\mathcal{Y}_{k+1}) \geq \cos(\theta_{max}(\mathcal{Y}_{k+1},\mathcal{V}_{\alpha})) \geq \cos(\textnormal{dist}(\mathcal{Y}_{k+1},\mathcal{V}_{\alpha})) \geq \cos\left(1 \right) > \frac{1}{2},
\end{equation*}
which implies that the second point holds at the $k+1$ iteration.

The result of the second point together with the induction hypothesis provide that $a(\mathcal{Y}_i)\geq \frac{1}{2}$ for all $i=0,...,k+1$. This means that $B_i$ can be taken equal to $\frac{1}{2}$ in all the analysis of Sections 4 and 5 and with a choice of $\alpha_k$ as in step 5 of the algorithm and $\phi_{k+1}^*$ as defined in the statement of the fourth  point, we have automatically that $f(\mathcal{X}_{k+1}) \leq \phi_{k+1}^*$. Thus the fourth  point is correct at iteration $k+1$. 

For showing that the steps 3-4 and 6 in the algorithm are well-defined in iteration $k+1$ (third  point), we need also a bound for $\textnormal{dist}(\mathcal{V}_{k+1},\mathcal{V}_{\alpha})$, which turns out to be a quite complicted. For that, we start by using the geometric bound stated in Lemma \ref{prop:tangent_space}:

\begin{equation*}
    \textnormal{dist}(\mathcal{V}_{k+1},\mathcal{V}_{\alpha}) \leq \| \Log_{\mathcal{Y}_{k+1}}(\mathcal{V}_{\alpha})-\Log_{\mathcal{Y}_{k+1}}(\mathcal{V}_{k+1}) \|.
\end{equation*}

The quantity on the right hand side is directly related to the sequence $\phi_k^*$:
\begin{align*}
    &\| \Log_{\mathcal{Y}_{k+1}}(\mathcal{V}_{\alpha})-\Log_{\mathcal{Y}_{k+1}}(\mathcal{V}_{k+1}) \|^2 = \frac{2}{\gamma_{k+1}} (\phi_{k+1}(\mathcal{V_{\alpha}})-\phi_{k+1}^*) \leq \frac{2}{\gamma_{k+1}} (\phi_0(\mathcal{V}_{\alpha})-f^*) = \\ & \frac{2}{\gamma_{k+1}} \left(\phi_0^*+\frac{\gamma_0}{2} \textnormal{dist}^2(\mathcal{X}_0,\mathcal{V}_{\alpha}) - f^* \right) = \frac{2}{\gamma_{k+1}} \left(f(\mathcal{X}_0)-f^*+\frac{\gamma_0}{2} \textnormal{dist}^2(\mathcal{X}_0,\mathcal{V}_{\alpha}) \right) \leq  \\ &  
    \frac{2}{\gamma_{k+1}} \frac{\gamma+\gamma_0}{2} \textnormal{dist}^2(\mathcal{X}_0,\mathcal{V}_{\alpha}) = \frac{\gamma+\gamma_0}{\gamma_{k+1}}  \textnormal{dist}^2(\mathcal{X}_0,\mathcal{V}_{\alpha}) \leq 4 \frac{\gamma}{\mu} \textnormal{dist}^2(\mathcal{X}_0,\mathcal{V}_{\alpha}) \leq \\ &  = 4 \frac{\gamma}{2 c_Q \delta} \frac{1}{64} c_Q \left(\frac{\delta}{\gamma} \right)^{3/2} = \frac{1}{32} \left( \frac{\delta}{\gamma} \right)^{1/2}.
\end{align*}
The first equality is implied by the definition of $\phi_{k+1}$, the first inequality by Proposition \ref{prop:bound*} combined with the inequality $\phi_{k+1}^* \geq f(\mathcal{X}_{k+1}) \geq f^*$ which holds since we have already explained that the fourth point holds for $k+1$, the second equality by the definition of $\phi_0$, the third equality by the definition of $\phi_0^*$, the second inequality by $\gamma$-smoothness, the third inequality by the upper bound on $\gamma_0$ and the lower bound on $\gamma_{k+1}$, and the rest are simple substitutions involving the bound in the initial distance $\textnormal{dist}(\mathcal{X}_0,\mathcal{V}_{\alpha})$.

Thus 
\begin{equation*}
    \textnormal{dist}(\mathcal{V}_{k+1},\mathcal{V}_{\alpha}) \leq \frac{1}{4 \sqrt{2}} \left( \frac{\delta}{\gamma} \right)^{1/4}.
\end{equation*}

Combining the bounds on $\textnormal{dist}(\mathcal{X}_{k+1},\mathcal{V}_{\alpha})$ and $\textnormal{dist}(\mathcal{V}_{k+1},\mathcal{V}_{\alpha})$ with the triangle inequality, we get
\begin{equation*}
    \textnormal{dist}(\mathcal{X}_{k+1},\mathcal{V}_{k+1}) \leq \textnormal{dist}(\mathcal{X}_{k+1},\mathcal{V}_{\alpha})+\textnormal{dist}(\mathcal{V}_{k+1},\mathcal{V}_{\alpha}) \leq \frac{1}{2}.
\end{equation*}
This implies that there is a unique geodesic connecting $\mathcal{X}_{k+1}$ and $\mathcal{V}_{k+1}$, thus steps 3-4 are well-defined in iteration $k+1$.

In addition, $\gamma$-smoothness of $f$ implies that
\begin{equation*}
    \| \textnormal{grad}f(\mathcal{Y}_{k+1}) \| \leq \gamma \textnormal{dist}(\mathcal{Y}_{k+1},\mathcal{V}_{\alpha}) \leq \frac{\gamma}{8}
\end{equation*}
which provides the bound
\begin{equation*}
    \left \|-\frac{1}{\gamma} \textnormal{grad}f(\mathcal{Y}_{k+1}) \right \|_2 \leq \frac{1}{8}.
\end{equation*}
The last bound implies that $-\frac{1}{\gamma} \textnormal{grad}f(\mathcal{Y}_{k+1})$ is inside the injectivity domain, thus step 6 is well-defined in iteration $k+1$. 

We lastly deal with step 8. We have that $\frac{(1-\alpha_{k+1}) \gamma_{k+1}}{\bar \gamma_{k+2}} = \frac{(1-\alpha_{k+1}) \gamma_{k+1}}{(1-\alpha_{k+1}) \gamma_{k+1}+\alpha_{k+1} \mu} \leq 1$ and $\| \Log_{\mathcal{Y}_{k+1}}(\mathcal{V}_{k+1}) \| \leq \| \Log_{\mathcal{X}_{k+1}}(\mathcal{V}_{k+1}) \| \leq \frac{1}{2}$. For the second summand $\frac{2 \alpha_{k+1}}{\bar \gamma_{k+2}} \textnormal{grad}f(\mathcal{Y}_{k+1})$, we use inequality (\ref{eq:func_val_desc}), which can be rewritten (with $B_{k+1}=\frac{1}{2}$) as
\begin{equation*}
   \frac{2\alpha_{k+1}^2}{\bar \gamma_{k+2}} \| \textnormal{grad}f(\mathcal{Y}_{k+1}) \|^2 \leq f(\mathcal{Y}_{k+1})-f(\mathcal{X}_{k+2}) \leq f(\mathcal{X}_0)-f^*.
\end{equation*}

Multiplying both sides with $\frac{2}{\bar \gamma_{k+2}}$ and using $\gamma$-smoothness of $f$, we get
\begin{equation*}
    \frac{4\alpha_{k+1}^2}{\bar \gamma_{k+2}^2} \| \textnormal{grad}f(\mathcal{Y}_{k+1}) \|^2 \leq \frac{\gamma}{\bar \gamma_{k+2}} \textnormal{dist}^2(\mathcal{X}_0,\mathcal{V}_{\alpha}) 
\end{equation*}

By definition, we have $\frac{\mu}{2} \leq \gamma_{k+1} \leq \bar \gamma_{k+2}$. Plugging in the assumed bound on the initial distance, we get
\begin{equation*}
    \frac{4\alpha_{k+1}^2}{\bar \gamma_{k+2}^2} \| \textnormal{grad}f(\mathcal{Y}_{k+1}) \|^2 \leq 2 \frac{\gamma}{\mu} \frac{1}{64} c_Q \left(\frac{\delta}{\gamma} \right)^\frac{3}{2} = \frac{1}{64} \frac{\gamma}{\delta} \left(\frac{\delta}{\gamma} \right)^\frac{3}{2} = \frac{1}{64} \left(\frac{\delta}{\gamma} \right)^\frac{1}{2} \leq \frac{1}{4},
\end{equation*}
where we used that $\mu=2 c_Q \delta$.

By triangle inequality, we get
\begin{equation*}
    \left\|\frac{(1-\alpha_{k+1}) \gamma_{k+1}}{\bar \gamma_{k+2}} \textnormal{Log}_{\mathcal{Y}_{k+1}}(\mathcal{V}_{k+1}) - \frac{2 \alpha_{k+1}}{\bar \gamma_{k+2}} \textnormal{grad}f(\mathcal{Y}_{k+1}) \right\| \leq \frac{1}{2} + \frac{1}{2} = 1,
\end{equation*}
thus
\begin{equation*}
    \frac{(1-\alpha_{k+1}) \gamma_{k+1}}{\bar \gamma_{k+2}} \textnormal{Log}_{\mathcal{Y}_{k+1}}(\mathcal{V}_{k+1}) - \frac{2 \alpha_{k+1}}{\bar \gamma_{k+2}} \textnormal{grad}f(\mathcal{Y}_{k+1})
\end{equation*}
is inside the injectivity radius of $\Gr(n,p)$ and step 8 is well-defined at iteration $k+1$.

With that in order, the simultaneous induction of all four  points is complete.
\end{proof}

\section{Effect of curvature/choice of parameters}

In Algorithm \ref{alg:first_version}, it is not clear whether we can choose $\gamma_{k+1}$ from $\bar \gamma_{k+1}$ (step 9) in a tractable way, such that $\gamma_{k+1} \geq \frac{\mu}{2}$. 

We start by showing that there is a way to choose $\gamma_{k+1}$ from $\bar \gamma_{k+1}$, such that 
\begin{equation*}
    \gamma_{k+1} \| \textnormal{Log}_{\mathcal{Y}_{k+1}}(\mathcal{V}_{\alpha}) - \textnormal{Log}_{\mathcal{Y}_{k+1}}(\mathcal{V}_{k+1}) \|^2 \leq \bar \gamma_{k+1} \| \textnormal{Log}_{\mathcal{Y}_k}(\mathcal{V}_{\alpha}) - \textnormal{Log}_{\mathcal{Y}_k}(\mathcal{V}_{k+1}) \|^2.
\end{equation*}
Note that $\mathcal{Y}_{k+1}$ is not yet computed at step 9, but it is to be computed exactly in the next iteration of the algorithm. However, this is not a problem, since the geometric result (Lemma \ref{le:geom_bound}) holds for any four points on a manifold of bounded sectional curvatures.

\begin{proposition}
\label{prop:param_curv}
    Choose 
    \begin{equation*}
        \gamma_0 \geq \frac{\sqrt{\beta^2+(1+\beta)\frac{\mu}{\gamma}}-\beta}{\sqrt{\beta^2+(1+\beta)\frac{\mu}{\gamma}}+\beta} \cdot \mu
    \end{equation*}
    and $\gamma_0 \leq \gamma$, 
    where
    \begin{equation*}
        \beta = \frac{1}{5} \sqrt{\frac{\mu}{\gamma}}.
    \end{equation*}
If $\mathcal{X}_0$ satisfies
     \begin{equation*}
         \textnormal{dist}(\mathcal{X}_0,\mathcal{V}_{\alpha}) \leq \frac{1}{8} \sqrt{c_Q} \left(\frac{\delta}{\gamma} \right)^{3/4},
     \end{equation*} 
then one can choose $\gamma_{k+1}$ from $\bar \gamma_{k+1}$ satisfying all the requirements in step 9 of Algorithm \ref{alg:first_version}, as
\begin{equation*}
    \gamma_{k+1} = \frac{1}{1+\beta} \bar \gamma_{k+1}.
\end{equation*}
\end{proposition}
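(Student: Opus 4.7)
The plan is to split the argument in two parts. First, use Lemma~\ref{le:geom_bound} with curvature bound $K=2$ to control the ratio of the two squared log-norms in step 9 of Algorithm~\ref{alg:first_version}. Second, verify inductively that the explicit prescription $\gamma_{k+1}:=\bar\gamma_{k+1}/(1+\beta)$ stays in the admissible window $[\mu/2,\bar\gamma_{k+1}]$, by identifying the fixed point of the resulting scalar iteration in $\gamma_k$.

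For the first part, I apply Lemma~\ref{le:geom_bound} to the four Grassmann points $a=\mathcal{V}_\alpha$, $b=\mathcal{V}_{k+1}$, $c=\mathcal{Y}_k$, $d=\mathcal{Y}_{k+1}$, giving the curvature factor $1+10M_k^2$ where $M_k:=\max\{\dist(\mathcal{Y}_k,\mathcal{V}_\alpha),\dist(\mathcal{Y}_{k+1},\mathcal{V}_\alpha)\}$. The induction in Theorem~\ref{thm_local} already provides $M_k\leq \tfrac{1}{8\sqrt{2}}(\delta/\gamma)^{1/4}$, which lies well under the precondition bound $1/(4\sqrt{K})=1/(4\sqrt{2})$. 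It then suffices to verify $10M_k^2\leq\beta$. Using $\mu=2c_Q\delta$ and $\beta=\tfrac{1}{5}\sqrt{\mu/\gamma}$, this reduces to a numerical inequality involving only $c_Q=4/\pi^2$ that holds with a comfortable margin; hence $\gamma_{k+1}=\bar\gamma_{k+1}/(1+\beta)$ satisfies the main inequality of step 9.

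For the second part, I analyze the one-dimensional recursion
\[
\gamma_{k+1}(1+\beta)=(1-\alpha_k)\gamma_k+\alpha_k\mu,\qquad 4\gamma\alpha_k^2=(1+\beta)\gamma_{k+1},
\]
and look for its fixed point. Setting $\gamma_k=\gamma_{k+1}=\gamma^\ast$ yields $\alpha^\ast=\beta\gamma^\ast/(\mu-\gamma^\ast)$, and eliminating $\alpha^\ast$ via the quadratic gives $(1+\beta)(\mu-\gamma^\ast)^2=4\beta^2\gamma\gamma^\ast$, whose relevant root can be read off as
\[
\gamma^\ast=\mu\cdot\frac{S-\beta}{S+\beta},\qquad S:=\sqrt{\beta^2+(1+\beta)\mu/\gamma},
\]
which is exactly the lower bound on $\gamma_0$ stated in the proposition. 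The required inequality $\gamma^\ast\geq\mu/2$ translates algebraically to $S\geq 3\beta$, i.e.\ $(1+\beta)\mu/\gamma\geq 8\beta^2$; with $\beta=\sqrt{\mu/\gamma}/5$ this collapses to the trivial bound $1+\beta\geq 8/25$.

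It then remains to propagate the invariant $\gamma^\ast\leq\gamma_k\leq\gamma$ through the iteration. The upper bound is easy because $\bar\gamma_{k+1}$ is a convex combination of $\gamma_k\leq\gamma$ and $\mu\leq\gamma$, so $\gamma_{k+1}\leq\gamma$. For the lower bound one needs monotonicity of the map $\gamma_k\mapsto\gamma_{k+1}$, which I would establish by implicit differentiation of the quadratic $4\gamma\alpha^2+(\gamma_k-\mu)\alpha-\gamma_k=0$ defining $\alpha_k$, checking that $d\bar\gamma_{k+1}/d\gamma_k>0$ on the interval $[\gamma^\ast,\gamma]$ and that $\alpha_k\in(0,1)$ throughout (so the Nesterov-style updates remain well-posed and the estimate-sequence construction of Proposition~\ref{prop:weak_estimate_sequence_general} applies). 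The main obstacle is precisely this monotonicity/stability analysis around the fixed point: because $\alpha_k$ depends on $\gamma_k$ only implicitly through a quadratic, one has to track signs on the relevant interval carefully, even though the final bookkeeping is elementary once the fixed-point identity is in hand.
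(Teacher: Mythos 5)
Your proposal is correct and follows essentially the same route as the paper: the same application of Lemma~\ref{le:geom_bound} to the four points $\mathcal{V}_\alpha,\mathcal{V}_{k+1},\mathcal{Y}_k,\mathcal{Y}_{k+1}$ with $K=2$ and the distance bound $M_k\leq\tfrac{1}{8\sqrt{2}}(\delta/\gamma)^{1/4}$ inherited from the induction of Theorem~\ref{thm_local}, reducing step~9 to the numerical check $\beta\geq 10M_k^2$ (your inequality direction is the right one; the reversed ``$\leq$'' displayed at this point in the paper is a typo), and the same threshold value $\mu\,\frac{S-\beta}{S+\beta}$ together with the identical verification $S\geq 3\beta\Leftrightarrow(1+\beta)\mu/\gamma\geq 8\beta^2$ to get $\gamma_k\geq\mu/2$. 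The only difference is bookkeeping in the induction step: you keep $\gamma_k$ above the threshold by recognizing it as the fixed point of the monotone scalar map $\gamma_k\mapsto\bar\gamma_{k+1}/(1+\beta)$, with monotonicity from implicit differentiation, $\tfrac{d\alpha_k}{d\gamma_k}=\frac{1-\alpha_k}{\sqrt{(\gamma_k-\mu)^2+16\gamma\gamma_k}}\geq 0$ (using $\alpha_k\leq 1$, which holds since $\mu\leq\gamma$), whereas the paper bounds $\alpha_k\geq g\bigl(\tfrac{C-\beta}{C+\beta}\mu\bigr)=\tfrac{C-\beta}{2}$ explicitly and finishes with a direct computation -- both variants are sound and of comparable length.
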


\begin{proof}
We proceed by induction. For $k=0$, $\gamma_0$ satisfies trivially the main inequality of step 9 (because there is no $\bar \gamma_0$). Also $\gamma_0 \geq \frac{\mu}{2}$, since (easy to see)
\begin{equation*}
    \frac{\sqrt{\beta^2+(1+\beta)\frac{\mu}{\gamma}}-\beta}{\sqrt{\beta^2+(1+\beta)\frac{\mu}{\gamma}}+\beta} \geq \frac{1}{2},
\end{equation*}
if $\beta=\frac{1}{5} \sqrt{\frac{\mu}{\gamma}}$.

Now we assume that we can choose $\gamma_{k+1}$ as in step 9 of Algorithm \ref{alg:first_version} in the first $k$ iterations. Then the result of Theorem \ref{thm_local} holds and its proof guarantees that 
\begin{equation*}
    \textnormal{dist}(\mathcal{Y}_k,\mathcal{V}_{\alpha}),(\mathcal{Y}_{k+1},\mathcal{V}_{\alpha}) \leq \frac{1}{8\sqrt{2}} \left(\frac{\delta}{\gamma} \right)^{1/4}.
\end{equation*}

This implies the weaker inequality
\begin{equation*}
 \textnormal{dist}(\mathcal{Y}_k,\mathcal{V}_{\alpha}), \textnormal{dist}(\mathcal{Y}_{k+1},\mathcal{V}_{\alpha}) \leq \frac{1}{4 \sqrt{K}}, 
\end{equation*}
where $K$ is an upper bound of the sectional curvatures of the Grassmann manifold and it is taken equal to $2$. Thus, the points $\mathcal{Y}_k,\mathcal{Y}_{k+1},\mathcal{V}_{k+1}$ and $\mathcal{V}_{\alpha}$ satisfy the assumptions of Lemma \ref{le:geom_bound}. This gives

\begin{equation*}
    \| \textnormal{Log}_{\mathcal{Y}_{k+1}}(\mathcal{V}_{\alpha}) - \textnormal{Log}_{\mathcal{Y}_{k+1}}(\mathcal{V}_{k+1}) \|^2 \leq \left(1+10 \frac{1}{128} \left(\frac{\delta}{\gamma} \right)^{1/2} \right) \| \textnormal{Log}_{\mathcal{Y}_k}(\mathcal{V}_{\alpha}) -\textnormal{Log}_{\mathcal{Y}_k}(\mathcal{V}_{k+1}) \|^2.    
\end{equation*}

Thus, we can choose $\gamma_{k+1}$ from $\bar \gamma_{k+1}$ such that
\begin{equation*}
    \frac{\bar \gamma_{k+1}}{\gamma_{k+1}} \geq 1+\frac{10}{128} \left(\frac{\delta}{\gamma} \right)^{1/2}
\end{equation*}
or similarly, we can take $\gamma_{k+1}=\frac{1}{1+\beta} \bar \gamma_{k+1}$ with 
\begin{equation*}
    \beta \leq \frac{10}{128} \left(\frac{\delta}{\gamma} \right)^{1/2}.
\end{equation*}
It easy to see that $\frac{1}{5} \left(\frac{\mu}{\gamma} \right)^{1/2} \leq \frac{10}{128} \left(\frac{\delta}{\gamma} \right)^{1/2}$, thus
\begin{equation*}
    \beta = \frac{1}{5} \left(\frac{\mu}{\gamma} \right)^{1/2}
\end{equation*}
is a valid choice.
Such a selection of $\beta$ is important for the rest. Note that $\beta$ is involved directly in the selection of $\gamma_0$ and affects the sequences $\gamma_k$ and $\bar \gamma_k$.

We now prove with the aforementioned selections of $\beta$ and $\gamma_0$ that $\gamma_{k+1}$ selected in step 9 always satisfies $\gamma_{k+1} \geq \frac{\mu}{2}$.

We first show that if $\gamma_s \geq \frac{\sqrt{\beta^2+(1+\beta)\frac{\mu}{\gamma}}-\beta}{\sqrt{\beta^2+(1+\beta)\frac{\mu}{\gamma}}+\beta} \cdot \mu $, then
$\alpha_s \geq \frac{\sqrt{\beta^2+(1+\beta)\frac{\mu}{\gamma}}-\beta}{2}$. To that end, we use the definition of $\alpha_s$ at step 4 of Algorithm \ref{alg:first_version}:
\begin{equation*}
    4 \alpha_s^2=\frac{(1-\alpha_s) \gamma_s+\alpha_s \mu}{\gamma}.
\end{equation*}
The positive solution of this quadratic equation is
\begin{equation*}
  \alpha_s = \frac{(\mu-\gamma_s) \frac{1}{4 \gamma}+\sqrt{(\mu-\gamma_s)^2 \frac{1}{16 \gamma^2}+\frac{\gamma_s}{\gamma}}}{2} =: g(\gamma_s) .
\end{equation*}
We first note that $\alpha_s$ is always less than $1$. Indeed, this happens if and only if
\begin{equation*}
    (\mu-\gamma_s) \frac{1}{4 \gamma}+\sqrt{(\mu-\gamma_s)^2 \frac{1}{16 \gamma^2}+\frac{\gamma_s}{\gamma}} \leq 2
\end{equation*}
or even stronger if 
\begin{equation*}
    (\mu-\gamma_s) \frac{1}{2 \gamma}+\sqrt{\frac{\gamma_s}{\gamma}} \leq 2
\end{equation*}
which is equivalent with
\begin{equation*}
    d^2 - 2d - \frac{\mu}{\gamma} \geq -4,
\end{equation*}
where $d:= \sqrt{\frac{\gamma_s}{\gamma}}$. Since $\frac{\mu}{\gamma} \leq 1$, it suffices to hold 
\begin{equation*}
    d^2 - 2d +3 \geq 0,
\end{equation*}
which always holds. Now, we bound $\alpha_s$ from below.

The function $g$ is increasing and we have
\begin{equation*}
    \alpha_s \geq g\left(\frac{\sqrt{\beta^2+(1+\beta)\frac{\mu}{\gamma}}-\beta}{\sqrt{\beta^2+(1+\beta)\frac{\mu}{\gamma}}+\beta} \cdot \mu \right)=g\left(\frac{C-\beta}{C+\beta} \cdot \mu \right),
\end{equation*}

where
\begin{equation*}
    C:=\sqrt{\beta^2+(1+\beta)\frac{\mu}{\gamma}}.
\end{equation*}

We have
\begin{equation*}
    \mu-\frac{C-\beta}{C+\beta} \mu= \frac{2 \beta}{C+\beta} \mu = \frac{2 \beta (C- \beta)}{C^2-\beta^2} = \frac{2 \beta (C-\beta)}{(1+\beta) \frac{\mu}{\gamma}} \mu = \frac{2 \gamma \beta (C-\beta)}{(1+\beta)}
\end{equation*}
and
\begin{equation*}
    \frac{C-\beta}{C+\beta} \mu = \frac{(C-\beta)^2}{C^2-\beta^2} \mu = \frac{(C-\beta)^2}{(1+\beta) \frac{\mu}{\gamma}} \mu = \frac{\gamma (C-\beta)^2}{1+\beta}.
\end{equation*}

Then,
\begin{align*}
    g\left(\frac{C-\beta}{C+\beta} \cdot \mu \right)  & = \frac{\frac{2 \gamma \beta (C-\beta)}{(1+\beta)}\frac{1}{4 \gamma}+\sqrt{\frac{4 \gamma^2 \beta^2 (C-\beta)^2}{(1+\beta)^2}\frac{1}{16 \gamma^2}+\frac{\gamma (C-\beta)^2}{1+\beta}\frac{1}{\gamma}}}{2} \\ & = \frac{\frac{ \beta (C-\beta)}{2(1+\beta)}+\sqrt{\frac{ \beta^2 (C-\beta)^2}{4(1+\beta)^2}+\frac{(C-\beta)^2}{1+\beta}}}{2} \\ & = (C-\beta) \frac{\frac{\beta}{2(1+\beta)}+\sqrt{\frac{\beta^2+4\beta+4}{4(1+\beta)^2}}}{2} \\ & = 
    \frac{C-\beta}{2} \left( \frac{\beta}{2(1+\beta)}+\frac{\beta+2}{2(1+\beta)} \right) \\ & =
    \frac{C-\beta}{2}.
\end{align*}
Thus,
\begin{equation*}
    \alpha_s \geq \frac{\sqrt{\beta^2+(1+\beta)\frac{\mu}{\gamma}}-\beta}{2}.
\end{equation*}

Now we prove that 
\begin{equation*}
    \gamma_k \geq \frac{C-\beta}{C+\beta} \cdot \mu 
\end{equation*}
for any $k \geq 0$ by induction.

The claim is correct for $k=0$, by the choice of $\gamma_0$. Let us assume that $\gamma_k \geq \frac{C-\beta}{C+\beta} \cdot \mu$. This also implies that $\alpha_k \geq \frac{C-\beta}{2}$ by the previous argument. Then $\gamma_{k+1}$ satisfies
\begin{equation*}
        (1+\beta) \gamma_{k+1}=(1-\alpha_k) \gamma_k + \alpha_k \mu.
    \end{equation*}

Since $\alpha_k \leq 1$ and $\gamma_k \geq \frac{C-\beta}{C+\beta} \mu$, we have
\begin{align*}
    &(1-\alpha_k) \gamma_k + \alpha_k \mu \geq (1-\alpha_k) \frac{C-\beta}{C+\beta} \mu + \alpha_k \mu = (1-\alpha_k)\mu+\alpha_k \mu - (1-\alpha_k) \frac{2 \beta}{C+\beta} \mu \\ =& \mu+(\alpha_k-1)\frac{2 \beta}{C+\beta} \mu \geq \mu + \left(\frac{C-\beta}{2}-1 \right) \frac{2 \beta}{C+\beta} \mu = \left(1+\frac{(C-\beta) \beta}{C+\beta} - \frac{2 \beta}{C+\beta} \right) \mu \\ =& (1+\beta) \frac{C-\beta}{C+\beta} \mu.
\end{align*}
Thus $\gamma_{k+1} \geq \frac{C-\beta}{C+\beta} \mu$ and the desired result holds.

For proving that $\gamma_k \geq \frac{\mu}{2}$, we only need to show that $\frac{C-\beta}{C+\beta} \geq \frac{1}{2}$, which is quite easy to see. This inequality can be written equivalently as
\begin{equation*}
    2C-2\beta \geq C+\beta \Leftrightarrow C \geq 3 \beta \Leftrightarrow \beta^2+(1+\beta)\frac{\mu}{\gamma} \geq 9 \beta^2 \Leftrightarrow (1+\beta)\frac{\mu}{\gamma} \geq 8 \beta^2 = \frac{8}{25} \frac{\mu}{\gamma}
\end{equation*}
Since $1+\beta>1$, the last inequality holds and the desired result follows. Thus, the sequence $\gamma_k$, created by $\bar \gamma_k$ as in the statement of the proposition, satisfies the requirements of step 9. 
\end{proof}

Proposition \ref{prop:param_curv} leads us to a more concrete version of Algorithm \ref{alg:first_version} with a specific choice of parameters:

\begin{algorithm}[H]

\small
\caption{Accelerated Gradient Descent for the Block Rayleigh Quotient}
\label{alg:second_version}
\begin{algorithmic}[1]
      \STATE \text{Initialize at} $\mathcal{X}_0=\mathcal{V}_0 \in \Gr(n,p)$ \text{and choose shrinkage parameter} $\beta = \frac{1}{5} \sqrt{\frac{\mu}{\gamma}}$
      \vspace{2mm}
      \STATE \text{Choose} $\gamma_0 \geq \frac{\sqrt{\beta^2+(1+\beta)\frac{\mu}{\gamma}}-\beta}{\sqrt{\beta^2+(1+\beta)\frac{\mu}{\gamma}}+\beta} \cdot \mu$
      \vspace{2mm}
      \FOR {$k \geq 0$}
      \vspace{2mm}
      \STATE $\beta_k =  \underset{\eta \in [0,1]}{\mathrm{argmin}}
      \left \lbrace f(\textnormal{Exp}_{\mathcal{V}_k} (\eta \textnormal{Log}_{\mathcal{V}_k}(\mathcal{X}_k))) \right \rbrace$
      \vspace{2mm}
      \STATE $\mathcal{Y}_k=\textnormal{Exp}_{\mathcal{V}_k}(\beta_k \textnormal{Log}_{\mathcal{V}_k}(\mathcal{X}_k))$
      \vspace{2mm}
      \STATE $4 \alpha_k^2=\frac{(1-\alpha_k) \gamma_k+\alpha_k \mu}{\gamma}$
      \vspace{2mm}
      \STATE $\mathcal{X}_{k+1}= \Exp_{\mathcal{Y}_k} \left(-\frac{1}{\gamma} \textnormal{grad}f(\mathcal{Y}_k) \right)$
      \vspace{2mm}
      \STATE $\bar \gamma_{k+1} = (1-\alpha_k) \gamma_k+ \alpha_k \mu$
      \vspace{2mm}
      \STATE $\gamma_{k+1} = \frac{1}{1+\beta} \bar \gamma_{k+1}$
      \vspace{2mm}
      \STATE $\mathcal{V}_{k+1}= \textnormal{Exp}_{\mathcal{Y}_k} \left(\frac{(1-\alpha_k) \gamma_k}{\bar \gamma_{k+1}} \textnormal{Log}_{\mathcal{Y}_k}(\mathcal{V}_k) - \frac{2 \alpha_k}{\bar \gamma_{k+1}} \textnormal{grad}f(\mathcal{Y}_k) \right)$ 
      \vspace{2mm}
      
      \ENDFOR
\end{algorithmic}      
 \end{algorithm}

From now on, we use Algorithm \ref{alg:second_version} as our standard algorithm. Its convergence is analysed in the next section.

\section{Convergence}

We are finally ready to complete the convergence analysis. We start with the following simple result. 

\begin{proposition}
\label{prop:first_conv}
    The sequence $\mathcal{X}_k$ generated by Algorithm \ref{alg:second_version} satisfies
    \begin{equation*}
        f(\mathcal{X}_k)-f^* \leq \tau_k \left(f(\mathcal{X}_0)-f^*+\frac{\gamma_0}{2} \textnormal{dist}^2(\mathcal{X}_0,\mathcal{V}_{\alpha}) \right).
    \end{equation*}
\end{proposition}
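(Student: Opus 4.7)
The proposition is essentially a synthesis of the ingredients assembled in the preceding sections, so the plan is to chain them together rather than to develop new estimates.

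First I would invoke Proposition \ref{prop:param_curv}, which guarantees that with the choices of $\beta$ and $\gamma_0$ specified in Algorithm \ref{alg:second_version}, the update $\gamma_{k+1}=\bar\gamma_{k+1}/(1+\beta)$ is a legitimate instance of step 9 of Algorithm \ref{alg:first_version}: it satisfies both the geometric inequality required by Proposition \ref{prop:weak_estimate_sequence_special} and the lower bound $\gamma_{k+1}\ge \mu/2$. This step uses the local initialization hypothesis through Lemma \ref{le:geom_bound}, and silently assumes the hypothesis $\dist(\mathcal{X}_0,\mathcal{V}_\alpha)\le \tfrac{1}{8}\sqrt{c_Q}(\delta/\gamma)^{3/4}$ that is inherited from Theorem \ref{thm_local}.

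Next, with step 9 verified for every $k$, Theorem \ref{thm_local} (iv) applies and yields $f(\mathcal{X}_k)\le \phi_k^*$ for all $k\ge 0$, where $\phi_k^*=\min_{\mathcal{X}}\phi_k(\mathcal{X})$ and $(\tau_k,\phi_k)$ is the concrete weak estimate sequence constructed in Proposition \ref{prop:weak_estimate_sequence_special}. Plugging this into Proposition \ref{prop:utility_weak_est_seq} then gives
\[
f(\mathcal{X}_k)-f^* \le \tau_k\bigl(\phi_0(\mathcal{V}_\alpha)-f^*\bigr).
\]

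Finally I would simplify $\phi_0(\mathcal{V}_\alpha)$ using the initialization conventions: by Proposition \ref{prop:weak_estimate_sequence_special}, $\phi_0(\mathcal{X})=\phi_0^*+\tfrac{\gamma_0}{2}\|\Log_{\mathcal{Y}_0}(\mathcal{X})\|^2$, and we chose $\phi_0^*=f(\mathcal{X}_0)$ together with $\mathcal{Y}_0=\mathcal{V}_0=\mathcal{X}_0$ (step 1 of Algorithm \ref{alg:second_version}, combined with the fact that the geodesic search at $k=0$ starts and ends at $\mathcal{X}_0$). Hence $\|\Log_{\mathcal{Y}_0}(\mathcal{V}_\alpha)\|=\dist(\mathcal{X}_0,\mathcal{V}_\alpha)$, giving
\[
\phi_0(\mathcal{V}_\alpha)-f^* = f(\mathcal{X}_0)-f^* + \tfrac{\gamma_0}{2}\dist^2(\mathcal{X}_0,\mathcal{V}_\alpha),
\]
which is exactly the claimed bound after multiplication by $\tau_k$.

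There is no real obstacle, since all the analytical work is already encoded in Theorem \ref{thm_local} and Propositions \ref{prop:utility_weak_est_seq}--\ref{prop:param_curv}; the only mild subtlety is remembering that the statement tacitly relies on the local initialization hypothesis needed to make Theorem \ref{thm_local} and Proposition \ref{prop:param_curv} applicable, and on correctly identifying $\mathcal{Y}_0$ with $\mathcal{X}_0$ when evaluating $\phi_0$ at the optimum.
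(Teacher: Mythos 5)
Your proposal is correct and follows essentially the same route as the paper: fix $\phi_0$ with $\phi_0^*=f(\mathcal{X}_0)$, use Theorem \ref{thm_local}(iv) to get $f(\mathcal{X}_k)\leq\phi_k^*$, and conclude via Proposition \ref{prop:bound*}. You are in fact slightly more explicit than the paper in flagging the reliance on Proposition \ref{prop:param_curv} and the initialization hypothesis, and in evaluating $\phi_0(\mathcal{V}_{\alpha})$ using $\mathcal{Y}_0=\mathcal{X}_0$, but the argument is the same.
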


\begin{proof}
    We choose
    \begin{equation*}
        \phi_0(\mathcal{X})=f(\mathcal{X}_0)+\frac{\gamma_0}{2} \textnormal{dist}^2(\mathcal{X}_0,\mathcal{V}_{\alpha})
    \end{equation*}
    as the beginning of the estimate sequence.
    Then $\phi_0^*=f(\mathcal{X}_0)$ and by construction of $\phi_k^*$ in Theorem \ref{thm_local}, we get $f(\mathcal{X}_k) \leq \phi_k^*$. The result now follows by simply applying Proposition \ref{prop:bound*}.
\end{proof}

Proposition \ref{prop:first_conv} provides a worst-case upper bound for the sub-optimality of $f$ and it only remains to estimate $\tau_k$. Such an estimation can be easilty obtained by the proof of Proposition \ref{prop:param_curv}.
\begin{proposition}
\label{prop:main_conv}
The sequence $\tau_k$, which is generated by the $\alpha_k$ quantites of Algorithm \ref{alg:second_version} starting from a point $\mathcal{X}_0$ such that
     \begin{equation*}
         \textnormal{dist}(\mathcal{X}_0,\mathcal{V}_{\alpha}) \leq \frac{1}{8} \sqrt{c_Q} \left(\frac{\delta}{\gamma} \right)^{3/4},
     \end{equation*}  
    is upper bounded as 
    \begin{equation*}
        \tau_k \leq \left(1-\frac{2}{5} \sqrt{\frac{\mu}{\gamma}} \right)^k.
    \end{equation*}
\end{proposition}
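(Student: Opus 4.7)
The plan is to exploit the recursion $\tau_{k+1}=(1-\alpha_k)\tau_k$ from Proposition \ref{prop:weak_estimate_sequence_special} (with $\tau_0=1$), which immediately gives the product formula $\tau_k=\prod_{j=0}^{k-1}(1-\alpha_j)$. Hence the whole claim reduces to proving the per-step lower bound
\[
\alpha_k\ \geq\ \frac{2}{5}\sqrt{\frac{\mu}{\gamma}}\qquad\text{for all }k\geq 0.
\]

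The bulk of this work has already been done inside the proof of Proposition \ref{prop:param_curv}. First I would invoke the hypothesis $\textnormal{dist}(\mathcal{X}_0,\mathcal{V}_{\alpha}) \leq \tfrac{1}{8}\sqrt{c_Q}(\delta/\gamma)^{3/4}$, which ensures that Theorem \ref{thm_local} and Proposition \ref{prop:param_curv} apply, so in particular $\gamma_k\geq \tfrac{C-\beta}{C+\beta}\mu$ for every $k$, where $\beta=\tfrac{1}{5}\sqrt{\mu/\gamma}$ and $C=\sqrt{\beta^2+(1+\beta)\mu/\gamma}$. Feeding this back into the monotonicity of the function $g(\gamma_s)=\alpha_s$ established in that proof yields the intermediate bound $\alpha_k\geq (C-\beta)/2$, and this is the only tool needed from earlier in the paper.

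The remaining step is a short algebraic comparison showing $(C-\beta)/2\geq \tfrac{2}{5}\sqrt{\mu/\gamma}$. Setting $t:=\sqrt{\mu/\gamma}$ so that $\beta=t/5$, one obtains
\[
C-\beta\ =\ \sqrt{\tfrac{t^2}{25}+\bigl(1+\tfrac{t}{5}\bigr)t^2}-\tfrac{t}{5},
\]
and the desired inequality $C-\beta\geq \tfrac{4}{5}t$ is equivalent to $\tfrac{t^2}{25}+(1+\tfrac{t}{5})t^2\geq t^2$ after squaring, which is obviously true (both extra terms are nonnegative). Together with the product representation of $\tau_k$ and the elementary bound $1-\alpha_k\leq 1-\tfrac{2}{5}\sqrt{\mu/\gamma}$, this gives the claim.

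There is no real obstacle here: the estimate is essentially a corollary of the work already carried out in Proposition \ref{prop:param_curv}. The only subtlety worth flagging is that the bound $\alpha_k\geq (C-\beta)/2$ is only valid when the algorithm is well-defined and $\gamma_k$ stays above the $(C-\beta)\mu/(C+\beta)$ threshold, so the initialization condition on $\mathcal{X}_0$ must be stated explicitly in the hypothesis (as it is) in order to quote Theorem \ref{thm_local} and close the induction that underlies Proposition \ref{prop:param_curv}.
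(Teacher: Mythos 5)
Your proposal is correct and follows essentially the same route as the paper: write $\tau_k=\prod_{i=0}^{k-1}(1-\alpha_i)$, quote the bound $\alpha_i\geq\frac{C-\beta}{2}$ with $C=\sqrt{\beta^2+(1+\beta)\mu/\gamma}$ established inside the proof of Proposition \ref{prop:param_curv} (valid under the stated initialization), and then check algebraically that $\frac{C-\beta}{2}\geq\frac{2}{5}\sqrt{\mu/\gamma}$. Your squaring argument for the last step is a slightly more direct version of the paper's computation, but the substance is identical.
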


\begin{proof}
By Proposition \ref{prop:weak_estimate_sequence_general}, $\tau_k$ is defined recursively as $\tau_0=1$ and $\tau_{k+1}=(1-\alpha_k) \tau_k$. This implies that
\begin{equation*}
    \tau_k = \Pi_{i=0}^{k-1} (1-\alpha_i)
\end{equation*}
and we only need to estimate a lower bound for $\alpha_i$.

The proof of Proposition \ref{prop:param_curv} provides a lower bound for $\alpha_i$ as
\begin{equation*}
    \alpha_i \geq \frac{\sqrt{\beta^2+(1+\beta)\frac{\mu}{\gamma}}-\beta}{2},
\end{equation*}
with $\beta=\frac{1}{5} \left(\frac{\mu}{\gamma} \right)^{1/2}$. This is because we proved that
$\gamma_i \geq \frac{C-\beta}{C+\beta} \cdot \mu $, for all $i$ by induction and also proved that
\begin{equation*}
    \gamma_i \geq \frac{C-\beta}{C+\beta} \cdot \mu \Rightarrow \alpha_i \geq \frac{\sqrt{\beta^2+(1+\beta)\frac{\mu}{\gamma}}-\beta}{2}.
\end{equation*}

Taking into account the exact value for $\beta$, we can rewrite the lower bound for $\alpha_i$ as 
\begin{equation*}
    \alpha_i \geq \frac{\sqrt{\beta^2+(1+\beta)\frac{\mu}{\gamma}}-\beta}{2} = \frac{1}{2} \sqrt{\frac{\mu}{\gamma}} \left(\sqrt{\frac{1}{25}+1+\frac{4}{25} \sqrt{\frac{\mu}{\gamma}}}-\frac{1}{5} \right) \geq \frac{2}{5} \sqrt{\frac{\mu}{\gamma}}.
\end{equation*}

By the definition of $\tau_k$ we get the desired result.
\end{proof}

We also rephrase the previous result in terms of iteration complexity:

\begin{theorem}
Algorithm \ref{alg:second_version} starting from $\mathcal{X}_0$ satisfying
     \begin{equation*}
         \textnormal{dist}(\mathcal{X}_0,\mathcal{V}_{\alpha}) \leq \frac{1}{8} \sqrt{c_Q} \left(\frac{\delta}{\gamma} \right)^{3/4},
     \end{equation*}  
     computes an estimation $\mathcal{X}_T$ of $\mathcal{V}_{\alpha}$ such that $\textnormal{dist}(\mathcal{X}_T,\mathcal{V}_{\alpha}) \leq \epsilon$ in at most
\begin{equation*}
    T= \bigO \left(\sqrt{\frac{\gamma}{\delta }} \log \frac{f(\mathcal{X}_0)-f^*}{ \varepsilon \delta } \right)
\end{equation*}
many iterates.
\end{theorem}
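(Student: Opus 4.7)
The plan is to combine the three main ingredients already developed in the paper: the estimate-sequence bound on the function gap from Proposition \ref{prop:first_conv}, the geometric decay rate for $\tau_k$ from Proposition \ref{prop:main_conv}, and the quadratic growth condition from Proposition \ref{prop:quadratic_growth} to translate a bound on the function gap into a bound on the distance to the optimum.

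First I would start from the conclusion of Proposition \ref{prop:first_conv}, which already gives $f(\mathcal{X}_k) - f^* \le \tau_k \, C_0$, where
\[
C_0 := f(\mathcal{X}_0) - f^* + \tfrac{\gamma_0}{2}\, \textnormal{dist}^2(\mathcal{X}_0, \mathcal{V}_\alpha).
\]
Since $\gamma_0 \le \gamma$ and the function $f$ is $\gamma$-smooth (Proposition \ref{prop:smoothness}), I can upper bound $\tfrac{\gamma_0}{2}\,\textnormal{dist}^2(\mathcal{X}_0, \mathcal{V}_\alpha)$ by $f(\mathcal{X}_0) - f^*$ up to a constant, yielding $C_0 \le c \,(f(\mathcal{X}_0) - f^*)$ for an absolute constant $c$. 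Then applying the quadratic growth condition (Proposition \ref{prop:quadratic_growth}) at $\mathcal{X}_T$ gives
\[
\textnormal{dist}^2(\mathcal{X}_T, \mathcal{V}_\alpha) \;\le\; \frac{1}{c_Q \delta}\bigl( f(\mathcal{X}_T) - f^* \bigr) \;\le\; \frac{\tau_T}{c_Q \delta}\, C_0.
\]

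Next I would invoke Proposition \ref{prop:main_conv}, which gives $\tau_T \le \bigl(1 - \tfrac{2}{5}\sqrt{\mu/\gamma}\bigr)^T \le \exp\bigl(-\tfrac{2T}{5}\sqrt{\mu/\gamma}\bigr)$. Requiring the right-hand side of the previous display to be $\le \epsilon^2$ and solving for $T$ produces
\[
T \;\ge\; \frac{5}{2}\sqrt{\frac{\gamma}{\mu}}\; \log\!\left( \frac{C_0}{c_Q\, \delta\, \epsilon^2} \right).
\]
Finally, using $\mu = 2 c_Q \delta$ we have $\sqrt{\gamma/\mu} = \Theta\bigl(\sqrt{\gamma/\delta}\bigr)$, and absorbing the constants together with the $C_0 \le c(f(\mathcal{X}_0)-f^*)$ bound into the $\mathcal{O}(\cdot)$, the logarithmic term simplifies to $\mathcal{O}\bigl(\log\frac{f(\mathcal{X}_0)-f^*}{\epsilon\, \delta}\bigr)$ (the squared $\epsilon^2$ becomes an extra factor of $2$ inside the log, which is absorbed by the big-O).

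I do not expect a serious obstacle: all the heavy lifting (the well-definedness of the algorithm, the weak estimate sequence, the construction of a valid $\gamma_k$ decay, and the lower bound on $\alpha_k$) has already been done in the preceding propositions. The only care needed is to verify that the initial closeness assumption is exactly the one under which Propositions \ref{prop:first_conv} and \ref{prop:main_conv} apply, and to keep the constants tidy enough that the final big-O expression matches the stated one.
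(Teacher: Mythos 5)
Your route is the same as the paper's: reduce $\dist(\mathcal{X}_T,\mathcal{V}_\alpha)\le\epsilon$ to $f(\mathcal{X}_T)-f^*\le c_Q\epsilon^2\delta$ via Proposition~\ref{prop:quadratic_growth}, then combine Propositions~\ref{prop:first_conv} and~\ref{prop:main_conv} with $(1-c)^k\le e^{-ck}$, solve for $T$, and substitute $\mu=2c_Q\delta$. The one step that does not work as written is your absorption of the term $\frac{\gamma_0}{2}\dist^2(\mathcal{X}_0,\mathcal{V}_\alpha)$ into $f(\mathcal{X}_0)-f^*$: $\gamma$-smoothness (Proposition~\ref{prop:smoothness}) gives $f(\mathcal{X}_0)-f^*\le\frac{\gamma}{2}\dist^2(\mathcal{X}_0,\mathcal{V}_\alpha)$, i.e.\ the inequality in the opposite direction of what you need, so it cannot yield $C_0\le c\,(f(\mathcal{X}_0)-f^*)$ with an absolute constant $c$. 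The inequality that does bound the distance by the function gap is quadratic growth again, and it gives $\frac{\gamma_0}{2}\dist^2(\mathcal{X}_0,\mathcal{V}_\alpha)\le\frac{\gamma}{2c_Q\delta}\,(f(\mathcal{X}_0)-f^*)$, whose factor is of the order of the Riemannian condition number, not an absolute constant. Indeed, if $\mathcal{X}_0$ deviates from $\mathcal{V}_\alpha$ only along directions where the Hessian eigenvalue is of order $\delta$, then $f(\mathcal{X}_0)-f^*$ is of order $\delta\,\dist^2(\mathcal{X}_0,\mathcal{V}_\alpha)$ while $\frac{\gamma_0}{2}\dist^2(\mathcal{X}_0,\mathcal{V}_\alpha)$ can be of order $\gamma\,\dist^2(\mathcal{X}_0,\mathcal{V}_\alpha)$, so no absolute-constant bound of the claimed form holds.

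The gap is benign, because the offending quantity only enters inside the logarithm: repairing it with the quadratic-growth bound above merely adds an additive $\bigO\bigl(\sqrt{\gamma/\delta}\,\log(\gamma/\delta)\bigr)$ term, and alternatively the initial-closeness assumption gives $\frac{\gamma_0}{2}\dist^2(\mathcal{X}_0,\mathcal{V}_\alpha)\le\frac{c_Q}{128}\,\delta\sqrt{\delta/\gamma}\le c_Q\delta$, which contributes only $\bigO\bigl(\sqrt{\gamma/\delta}\,\log(1/\epsilon)\bigr)$. For comparison, the paper's own proof carries only $f(\mathcal{X}_0)-f^*$ through the chain and silently drops the $\frac{\gamma_0}{2}\dist^2(\mathcal{X}_0,\mathcal{V}_\alpha)$ term of Proposition~\ref{prop:first_conv}, so your bookkeeping is in fact more explicit; just replace the smoothness justification by one of the fixes above (or keep $\phi_0(\mathcal{V}_\alpha)-f^*$ inside the logarithm), and the rest of your argument goes through exactly as in the paper.
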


\begin{proof}
For $\textnormal{dist}(\mathcal{X}_T,\mathcal{V}_{\alpha}) \leq \epsilon$, it suffices to have
\begin{equation*}
    f(\mathcal{X}_T)-f^* \leq c_Q \epsilon^2 \delta
\end{equation*}
by quadratic growth of $f$ (Proposition \ref{prop:quadratic_growth}). Using $(1-c)^k \leq \exp(-c k)$ for all $k \geq 0$ and $0 \leq c \leq 1$, Propositions \ref{prop:first_conv} and \ref{prop:main_conv} give that it suffices to choose $k$ as the smallest integer such that 
\begin{equation*}
    f(\mathcal{X}_k)-f^* \leq \exp\left(- \frac{2}{5} \sqrt{\frac{\mu}{ \gamma}} k \right) (f(\mathcal{X}_0)-f^*) \leq c_Q \epsilon^2 \delta.
\end{equation*}
Solving for $k$ and substituting $\mu = 2 c_Q \delta$, we get the required statement.
\end{proof}

\paragraph{Remark} Since the expression
\begin{equation*}
    \frac{\sqrt{\beta^2+(1+\beta)\frac{\mu}{\gamma}}-\beta}{\sqrt{\beta^2+(1+\beta)\frac{\mu}{\gamma}}+\beta} \cdot \mu
\end{equation*}
is strictly increasing with respect to $\mu$, we can choose $\gamma_0$ by substituting $\mu$ with an over-approximation, for example $\gamma$:
\begin{equation*}
    \gamma_0 = \frac{\sqrt{\beta^2+\beta+1}-\beta}{\sqrt{\beta^2+\beta+1}+\beta} \cdot \gamma
\end{equation*}

\section{Implementation details and computational cost of Algorithm \ref{alg:second_version}}

A naive implementation of Step 4 of Algorithm \ref{alg:second_version} can become quite costly as a simple binary search may need many function evaluations to reach $\beta_k$ in a good accuracy and as a result many large matrix-vector multiplications. Fortunately, we can manipulate the expressions so that it suffices to do only two large matrix-vector multiplications. The idea for such a technique comes from \cite{alimisis2023gradient}.

Let $\mathcal{X}$ be a point on Grassmann and $P$ a search direction. We consider the function
\begin{align*}
    \mathcal{X}(\eta) = \Exp_{\mathcal{X}} (\eta P) = \myspan(X V \cos(\eta \Sigma) V^T + U \sin(\eta \Sigma) V^T),
\end{align*}
where $X$ is a representative of $\mathcal{X}$ and $U \Sigma V^T$ a compact SVD of $P$. Here $\Sigma$ is taken as a diagonal matrix and the functions $\sin$ and $\cos$ act only to its diagonal entries.

The value of $f$ evaluated at $\mathcal{X}(\eta)$ is
\begin{align*}
    f(\mathcal{X}(\eta)) = & -\Tr((X V \cos(\eta \Sigma) V^T + U \sin(\eta \Sigma) V^T)^T A (X V \cos(\eta \Sigma) V^T + U \sin(\eta \Sigma) V^T)) \\ = & -\Tr(V \cos(\eta \Sigma) V^T X^T A X V \cos(\eta \Sigma) V^T ) - \Tr(V \sin(\eta \Sigma) U^T A U \sin(\eta \Sigma) V^T) \\ & - \Tr(V \cos(\eta \Sigma) V^T X^T A U \sin(\eta \Sigma) V^T) - \Tr(V \sin(\eta \Sigma) U^T A X V \cos(\eta \Sigma) V^T)   \\ =& -\Sigma_{i=1}^p (\cos^2(\eta \Sigma_i) \alpha_i + 2 \sin(\eta \Sigma_i) \cos(\eta \Sigma_i) \beta_i+\sin^2(\eta \Sigma_i) \gamma_i),
\end{align*}
where
\begin{equation*}
    \alpha_i=(V^T X^T A X V)_{ii},\hspace{1mm} \beta_i = (V^T X^T A U)_{ii} \hspace{2mm} \text{and} \hspace{2mm} \gamma_i = (U^T A U)_{ii}.
\end{equation*}

Thus, for computing the steps 4-5 of Algorithm \ref{alg:second_version}, we need to compute the matrix-vector products $A V_k$ and $A U$, where $U$ is the first matrix in the SVD of $\Log_{\mathcal{V}_k}(\mathcal{X}_k)$. Then, we can execute binary search (or any accelerated version, including Newton's method) for calculating $\beta_k$ without needing to compute any additional matrix-vector products with $A$. Moreover, these calculations are enough to provide immediately the product $A Y_k$ as
\begin{equation*}
    A Y_k = (A V_k) V \cos(\beta_k \Sigma) V^T + (A U) \sin(\beta_k \Sigma) V^T,
\end{equation*}
where $U \Sigma V^T$ is the SVD of $\Log_{\mathcal{V}_k}(\mathcal{X}_k)$. Thus, for computing the gradient step (step 7) in Algorithm \ref{alg:second_version}, we do not need to compute any new matrix-vector products as  $A Y_k$ suffices for calculating $\textnormal{grad}f(\mathcal{Y}_k)$. Consequently, the cost of computing one iteration of Algorithm \ref{alg:second_version} is two matrix-vector products. This is more than steepest descent or conjugate gradients method \cite{alimisis2023gradient} (that need only one matrix-vector product), but still reasonable as accelerated gradient descent typically features three kind of iterates ($\mathcal{X}_k$, $\mathcal{Y}_k$, $\mathcal{V}_k$).

\section{Numerical experiments}

\begin{table}[t]
    \centering
    \begin{tabular}{c c  c c c}
         \hline
         Matrix & $n$  & $\kappa$ & structure & fraction of nnz\\
         \hline 
         \texttt{FD3D} & $35\,000$ & $7.0\cdot 10^3$ & real & $1.95\cdot 10^{-4}$\\
         \texttt{ukerbe1} & $5\,981$ &  $-$ & rank-deficient, binary& $4.39 \cdot 10^{-4}$\\
         \texttt{ACTIVSg70K} & $69\,999$ & $2.9\cdot 10^8$ & real & $4.87\cdot 10^{-5}$ \\
         \texttt{boneS01} & $127\,224$ &  $4.2\cdot 10^{7}$ & real  & $3.40 \cdot 10^{-4}$ \\
         \texttt{audikw\_1} & $943\,695$ &  $-$ & rank-deficient, real & $8.7 \cdot 10^{-5}$ \\
         \hline
    \end{tabular}
    \caption{Summary statistics of the tested matrices.}
    \label{tab:matrix_summary}
\end{table}


We test the proposed method on a series of benchmark test matrices from the SuiteSparse Matrix Collection \cite{davis2011sparse} used also by \cite{saadNumericalMethodsLarge2011}. The main properties of the tested matrices, including their size $n$, condition number $\kappa = \lambda_1/\lambda_n$ and other structural properties, are summarized in Table~\ref{tab:matrix_summary}. For each of the tested problems we also report  the condition number
\begin{equation}
    \kappa_\mathrm{R} = \frac{\lambda_1 - \lambda_n}{\lambda_p - \lambda_{p+1}} = \bigO\left( 1/\delta \right),
\end{equation}
of the Riemannian Hessian evaluated the dominant subspace with spectral gap $\delta$ \cite{alimisis2022geodesic, alimisis2023gradient}.

\begin{table}[t]
    \centering
    \begin{tabular}{c | c c c c}
         Method & Mat.~products per iter.\\
         \hline
         Riem.~steepest descent  & 1 \\
         Chebyshev filter & 1\\
         BlockRQ RCG & 1 \\
         Nesterov acceleration & 2 
    \end{tabular}
    \caption{Comparison of the number of matrix products with $A$ required by each of the methods. \emph{Riem.\ steepest descent and Chebyshev filter subspace method require an additional matrix product every $k$ iterations where $k$ corresponds to the degree of the filter polynomial}.\label{tab:cost_per_iter}}
\end{table}

We compare the efficiency of the proposed Nesterov acceleration with three other methods: Riemannian steepest descent, Chebyshev filter in subspace iteration, and the Riemannian conjugate gradient for block Rayleigh quotient (BlockRQ RCG). We precompute the eigenvalues using \texttt{eigs} command in \uppercase{Matlab} required to determine the optimal Chebyshev filter for the subspace iteration and the parameters in the Nesterov acceleration and BlockRQ RCG. The tested algorithms differ in the number of matrix-vector products that they require per iteration, which we summarize in Table~\ref{tab:cost_per_iter}.


\paragraph{FD3D} We generate a 3D finite difference Laplacian matrix corresponding to the uniform grid of size $35\times 40 \times 25$ and zero Dirichlet boundary conditions, resulting in a matrix of size $35\,000$.

The four problems with the finite difference matrix \texttt{FD3D} are summarized in Table~\ref{tab:problems_fd3d}. We experiment with computing the dominant subspace of dimension $p=128$ and also with the minimization of the Rayleigh quotient.


\begin{table}[H]
    \centering
    \begin{tabular}{c | c c c c c}
         problem nb (\texttt{FD3D}) & type & $p$  & $\delta_p$ & $\kappa_\mathrm{R}$  & Cheb.\ degree \\
         \hline 
         1 & min & $128$ & $8.3 \cdot 10^{-4}$  &$1.4\cdot 10^4$ & $100$\\
         2 & max & $128$ & $8.3 \cdot 10^{-4}$  &$1.4\cdot 10^4$  & $100$
    \end{tabular}
    \caption{Tested problems for the \texttt{FD3D} matrix ($n = 35\,000, \kappa = 7.0\cdot 10^3$).}
    \label{tab:problems_fd3d}
\end{table}

\begin{figure}[t]
    \centering
    \begin{subfigure}[b]{0.45\textwidth}
         \centering
         \includegraphics[width=\textwidth]{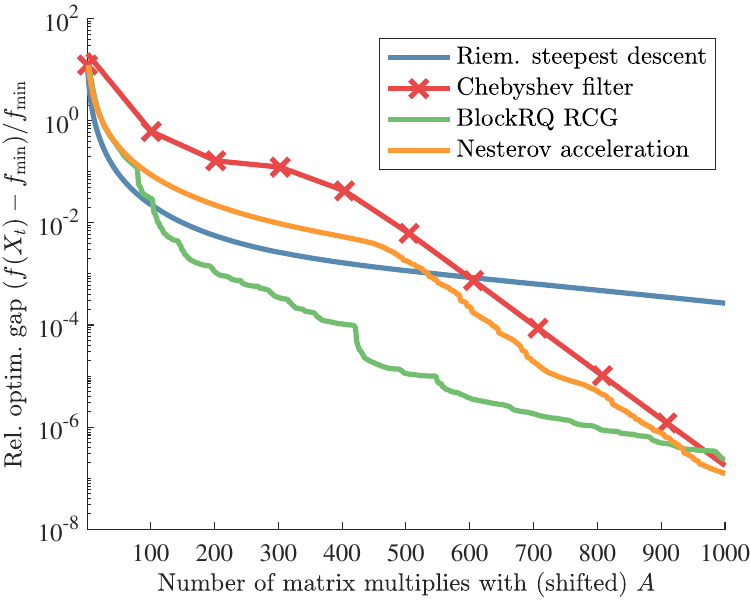}
         \caption{Problem nb 1 ($p=128$, min.)\label{fig:fd3d-p128-min_matvecs}}
    \end{subfigure}
    \begin{subfigure}[b]{0.45\textwidth}
         \centering
         \includegraphics[width=\textwidth]{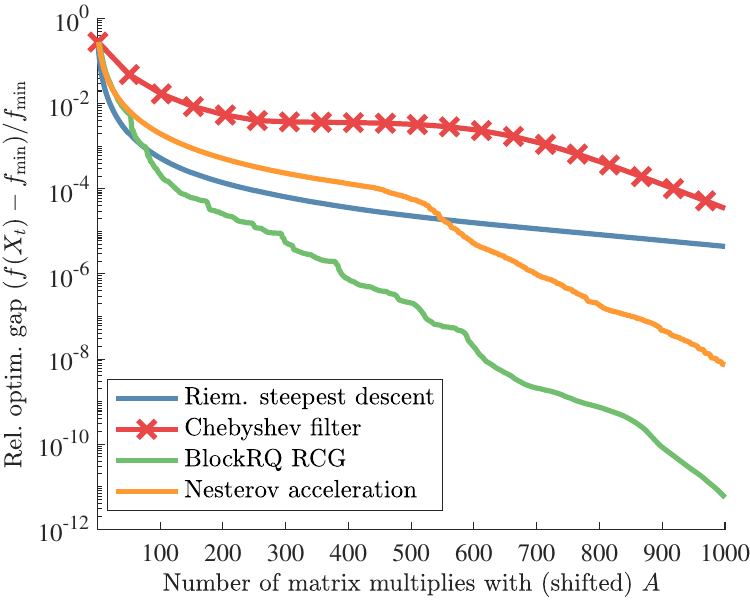}
         \caption{Problem nb 2  ($p=128$, max.)\label{fig:fd3d-128_matvecs}}
    \end{subfigure}
    \caption{The \texttt{FD3D} matrix.\label{fig:fd3d}}
\end{figure}

In Figure~\ref{fig:fd3d} we show the convergence plots tracking the number of matrix-vector products for problem nb 1 and 2. Overall, for both problems BlockRQ RCG outperforms the other methods, while Nesterov Acceleration matches the slow convergence of Riem.\ steepest descent for the first $250$ iterations after which it starts converging at the proved rate $\bigO(1/\sqrt{\delta})$. The Chebyshev subspace iteration with polynomial of degree $100$ is able to match the convergence rate of the Nesterov acceleration on problem nb 1 but not on problem nb 2.


\paragraph{ukerbe1} The matrix comes from a locally refined non-uniform grid of a 2D finite element problem. Although the matrix is of a smaller size $n = 5\,981$ compared to the other tested matrices, it is a more challenging problem due to the non-uniform grid resulting in a very high condition number. The two tested problems are summarized in Table~\ref{tab:problems_ukerbe1} and differ in the size of the subspace $p=64$ and $p=128$.

\begin{table}[H]
    \centering
    \begin{tabular}{c | c c c c c}
         problem nb (\texttt{ukerbe1}) & type & $p$  & $\delta_p$ & $\kappa_\mathrm{R}$ & Cheb.\ degree\\
         \hline 
         3 & max & $64$ & $1.2\cdot 10^{-3}$ & $5.2\cdot 10^3$ & $100$\\
         4 & max & $128$ & $9.4\cdot 10^{-4}$ & $6.7\cdot 10^3$ & $100$
    \end{tabular}
    \caption{Tested problems for \texttt{ukerbe1} rank-deficient matrix ($n =5\,981$).\label{tab:problems_ukerbe1}}
\end{table}

\begin{figure}[t!]
    \centering
    \begin{subfigure}[b]{0.45\textwidth}
        \centering
        \includegraphics[width=\textwidth]{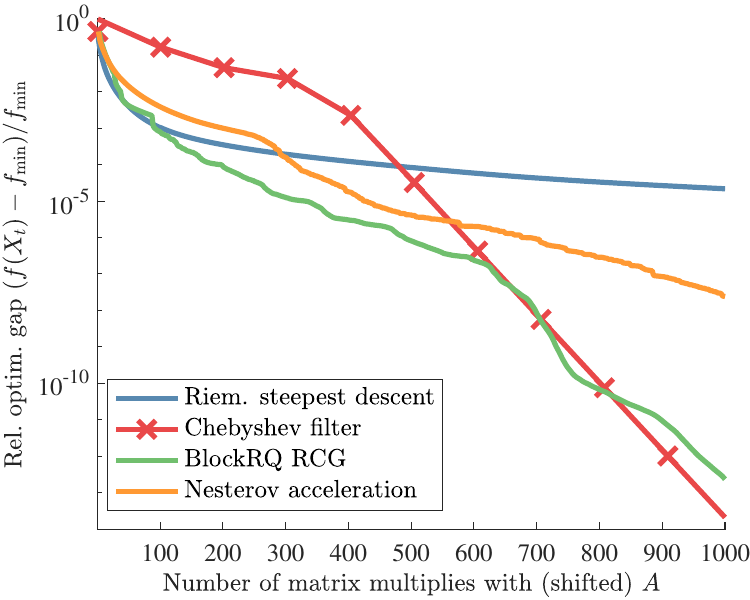}
         \caption{Problem nb 3 ($p=64$)\label{fig:ukerbe1-p64-max_iter}}
     \end{subfigure}
     \begin{subfigure}[b]{0.45\textwidth}
         \centering
         \includegraphics[width=\textwidth]{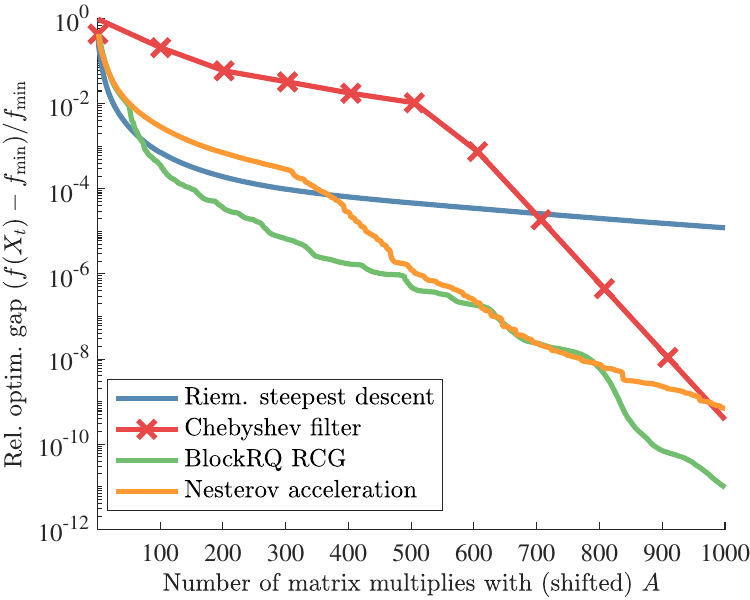}
         \caption{Problem nb 4 ($p=128$)\label{fig:ukerbe1-p128-max_matvecs}}
    \end{subfigure}
    \caption{Comparison on \texttt{ukerbe1} test matrix on problem nb 3 and 4. \label{fig:ukerbe1}}
\end{figure}
In Figure~\ref{fig:ukerbe1} we see the performance of the methods on problem nb 3 and 4 for \texttt{ukerbe1} matrix. In both problems BlockRQ RCG and Chebyshev subspace iteration eventually outperform the Nesterov acceleration. We also observe an initial slow convergence of the Chebyshev iteration until the iteration $400$ and $500$ respectively. 


\paragraph{ACTIVSg70K} This large matrix models a synthetically generated power system grid. We experiment with subspace dimension $p=32$ and $p=64$ as described in Table~\ref{tab:problems_ACTIVSg70K}.


\begin{table}[H]
    \centering
    \begin{tabular}{c | c c c c c}
         problem nb (\texttt{ACTIVSg70K}) & type & $p$  & $\delta_p$ & $\kappa_\mathrm{R}$ & Cheb.\ degree \\
         \hline 
         5 & max & $32$ & $2.2\cdot 10^2$  & $1.2\cdot 10^3$ & $50$ \\
         6 &  max & $64$ & $2.0\cdot 10^2$  & $1.3\cdot 10^3$ & $50$
    \end{tabular}
    \caption{Tested problems for \texttt{ACTIVSg70K} matrix ($n = 69\,999, \kappa = 2.9\cdot 10^8$).}
    \label{tab:problems_ACTIVSg70K}
\end{table}

\begin{figure}[t]
    \centering
    \begin{subfigure}[b]{0.45\textwidth}
        \centering
        \includegraphics[width=\textwidth]{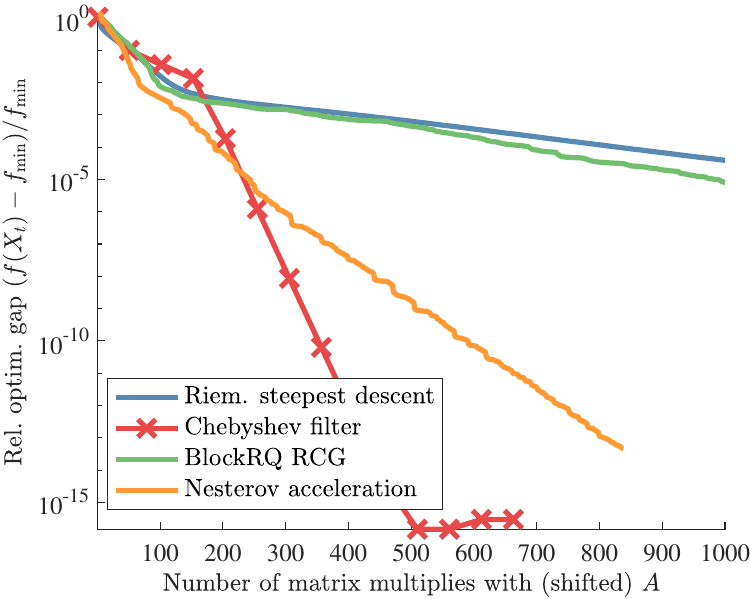}
         \caption{Problem nb 5 (max, $p=32$)}
     \end{subfigure}
     \begin{subfigure}[b]{0.45\textwidth}
         \centering
         \includegraphics[width=\textwidth]{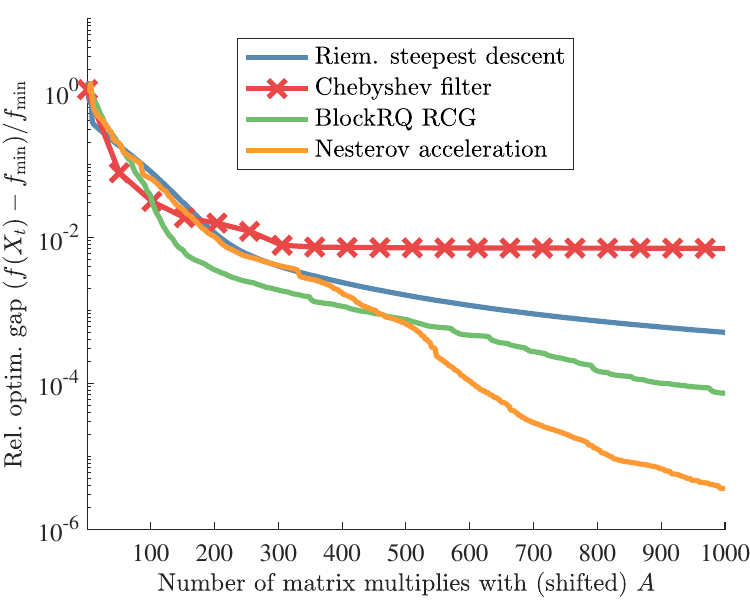}
        \caption{Problem nb 6 (max, $p=64$)}
    \end{subfigure}
    \caption{Comparison on \texttt{ACTIVSg70K} test matrix on problem nb 5 and 6.\label{fig:ACTIVSg70K}}
\end{figure}

Figure \ref{fig:ACTIVSg70K} shows the convergence plots for \texttt{ACTIVSg70K}. We see that Nesterov Acceleration outperforms the other methods on the problem with larger subspace dimension of $p=64$ (which is harder). For the problem with smaller subspace of dimension $p=32$, the Chebyshev iteration algorithm outperforms the other methods (while being the worst performing on $p=64$), which reveals its sensitivity to choosing the correct degree of the filter polynomial.


\paragraph{boneS01} The second largest matrix we test is of size $n=127\,224$ and comes from a finite element model studying the porous bone micro-architecture. The problem is challenging due to its large size and Riemannian condition number, see Table~\ref{tab:problems_boneS01}

\begin{table}[H]
    \centering
    \begin{tabular}{c | c c c c c}
         problem nb (\texttt{boneS01}) & type & $p$  & $\delta_p$ 
         & $\kappa_\mathrm{R}$ & Cheb.\ degree \\
         \hline 
         7 & max & $64$ & $2.4\cdot 10^1$ & $2.1\cdot 10^3$ & $50$ \\
         8 & max & $128$ & $1.3\cdot 10^1$ & $3.6\cdot 10^3$ & $50$
    \end{tabular}
    \caption{Tested problems for \texttt{boneS01} matrix ($n = 127\,224, \kappa = 4.2\cdot 10^7$).}
    \label{tab:problems_boneS01}
\end{table}

\begin{figure}[t]
    \centering
    \begin{subfigure}[b]{0.45\textwidth}
        \centering
        \includegraphics[width=\textwidth]{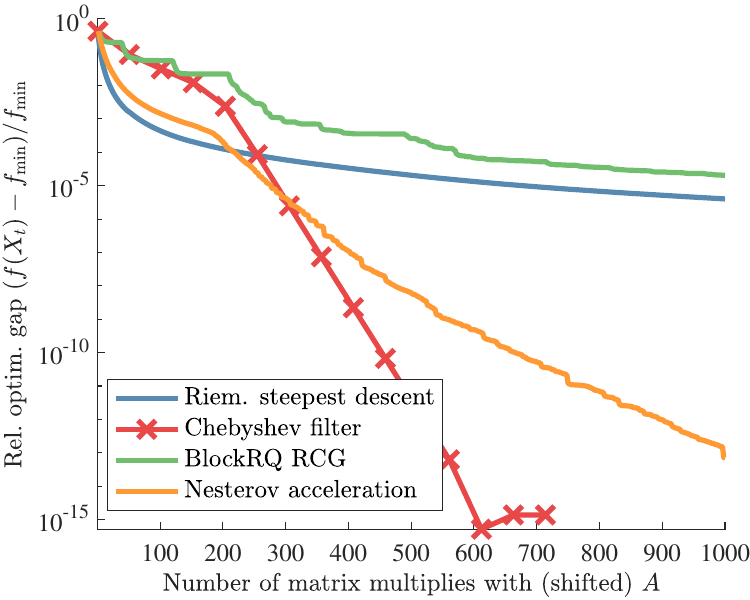}
         \caption{Prob nb 7 (max $p=64$)\label{fig:boneS01-p64-matvecs}}
     \end{subfigure}
     \begin{subfigure}[b]{0.45\textwidth}
         \centering
         \includegraphics[width=\textwidth]{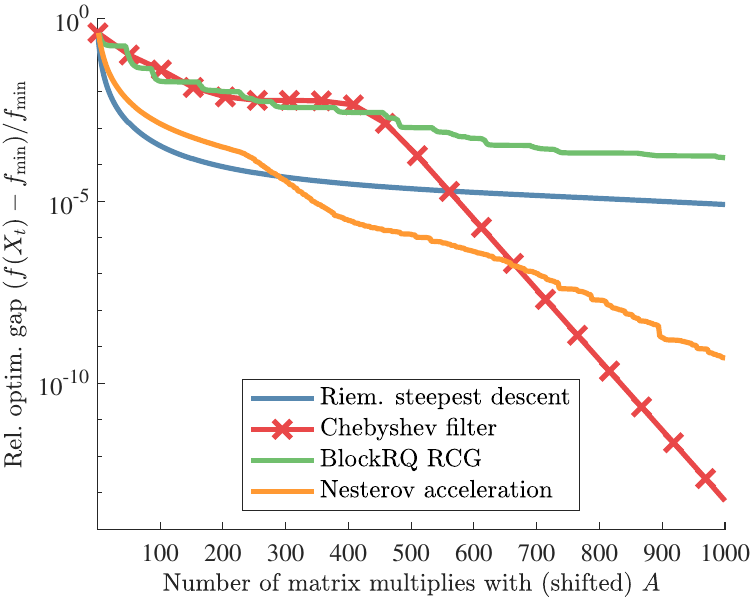}
         \caption{Prob nb 8 (max $p=128$)\label{fig:boneS01-p128-max_matvecs}}
    \end{subfigure}
    \caption{Comparison on \texttt{boneS01} test matrix on problem 7 and 8.\label{fig:boneS01}}
\end{figure}

Figure \ref{fig:boneS01} shows the performance of the methods on \texttt{boneS01}. We see that the Chebyshev subspace iteration with degree $50$, while having slower convergence at the beginning, outperforms the other methods. Nesterov acceleration is the second best performing and faster than Block RQ RCG and Riemannian steepest descent. 


\paragraph{audikw\_1}
The largest matrix of size $n=943\, 695$ we experiment with comes from finite elements problem modelling automotive crankshaft structure. The problem is challenging due to its large size and its large Riemannian condition number as can be seen in Table~\ref{tab:problems_audikw1}.
\begin{table}[h]
    \centering
    \begin{tabular}{c | c c c c c}
         problem nb (\texttt{audikw\_1}) & type & $p$  & $\delta_p$ & $\kappa_\mathrm{R}$ & Cheb.\ degree \\
         \hline 
         9 & max & $32$ & $4.3\cdot 10^6$ & $5.6\cdot 10^3$ & $25$\\
         10 & max & $64$ & $1.9\cdot 10^7$ & $1.3\cdot 10^3$ & $25$
    \end{tabular}
    \caption{Tested problems for \texttt{audikw\_1}  rank-deficient matrix ($n = 943\,695$).}
    \label{tab:problems_audikw1}
\end{table}

\begin{figure}[t]
    \centering
    \begin{subfigure}[b]{0.45\textwidth}
        \centering
        \includegraphics[width=\textwidth]{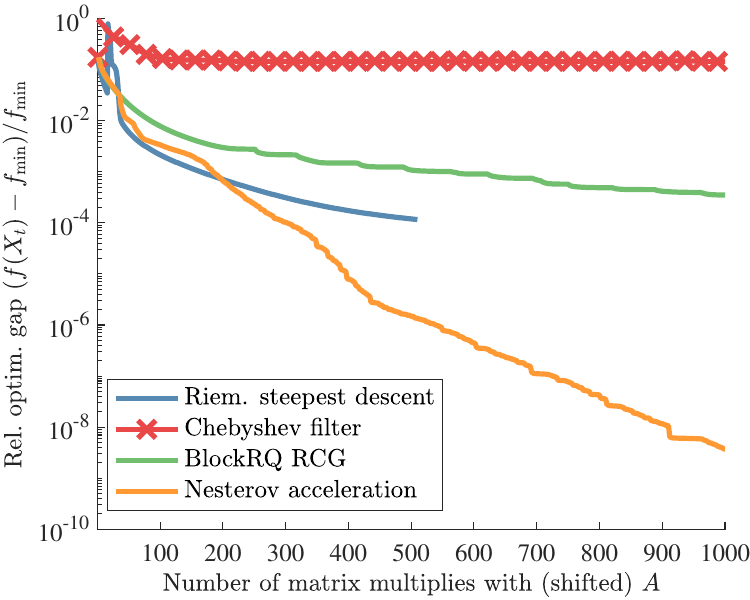}
        \caption{Prob nb 9 (max, $p=64$)\label{fig:audikw_1-p64-matvecs}}   
    \end{subfigure}
     \begin{subfigure}[b]{0.45\textwidth}
         \centering
         \includegraphics[width=\textwidth]{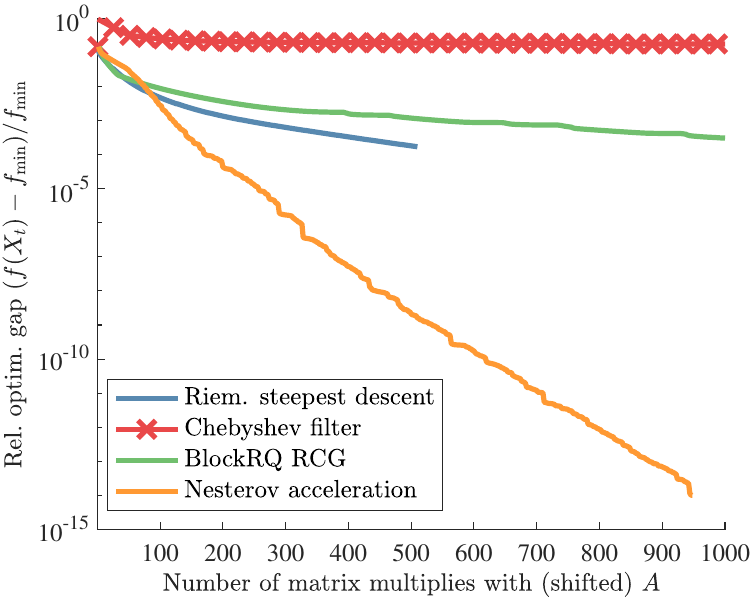}
         \caption{Prob nb 10 (max, $p=128$)\label{fig:audikw_1-p128-max_matvecs}}
    \end{subfigure}
    \caption{Comparison on \texttt{audikw\_1} test matrix on problem nb 9 and 10.\label{fig:audikw_1}}
\end{figure}

In Figure \ref{fig:audikw_1} we see the convergence results for the largest tested matrix \texttt{audikw\_1}. In this test, Nesterov acceleration clearly outperforms the other methods. The Chebyshev subspace iteration algorithm does not converge, which might be due to the wrong choice for the degree of the polynomial.

\section{Conclusion}

In this work, we brought together one of the most important problems in applied mathematics, namely the symmetric eigenvalue problem, and one of the most famous optimization algorithms, namely Nesterov's accelerated gradient descent. The result is an accelerated gradient descent algorithm on the Grassmann manifold that solves the symmetric eigenvalue problem with accelerated convergence guarantees which match the state-of-the-art. Numerically, our algorithm shows promising performance and in some large problems outperforms other important eigenvalue solvers. There are two main disadvantages that can be fruitful ground for future work. First, the convergence analysis is very complicated and the resulting convergence rate only local. Second, the performance of the algorithm depends heavily on the choice of hyperparameters, which are not easy to be known in advance. One can try to work with more advanced versions of Riemannian gradient descent than the one presented in \cite{zhang2018towards}, which hopefully will simplify the analysis, improve the convergence guarantees and free the algorithm from a heavy selection of hyperparameters. In any case, we expect the quasi-convexity structure of eigenvalue problems proved in \cite{alimisis2022geodesic} to be the cornerstone of all such efforts.

\bibliographystyle{plain} 
\bibliography{refs}



\newpage

\vskip 0.2in

\end{document}